\newcommand{\bs}{\boldsymbol}
\newcommand{\mrm}{\mathrm}
\def\cequiv{\raisebox{-1.5mm}{$\;\stackrel{\raisebox{-3.9mm}{=}}{{\sim}}\;$}}
\def\uw{\undertilde{w}}
\def\uv{\undertilde{v}}
\def\usigma{\undertilde{\sigma}}
\def\utau{\undertilde{\tau}}
\def\uphi{\undertilde{\varphi}}
\def\upsi{\undertilde{\psi}}
\def\ueta{\undertilde{\eta}}
\def\ugamma{\undertilde{\gamma}}
\def\uG{\undertilde{G}}
\def\uH{\undertilde{H}}
\def\uk{\undertilde{k}}
\def\ux{\undertilde{x}}
\def\up{\undertilde{p}}
\def\uP{\undertilde{P}}
\def\uS{\undertilde{S}}
\def\rot{{\rm rot}}
\def\curl{{\rm curl}}
\def\dv{{\rm div}}
\newtheorem{theorem}{Theorem}
\newtheorem{remark}[theorem]{Remark}
\newtheorem{lemma}[theorem]{Lemma}
\newcounter{mnote}
\let\oldmarginpar\marginpar
\renewcommand\marginpar[1]{\-\oldmarginpar[\raggedleft\footnotesize #1]%
  {\raggedright\footnotesize #1}}
\def\llt{\llbracket}
\def\rrt{\rrbracket}
\begin{document}

\def\uPnsk{\uP{}^{\mathbf{n},s}_k}
\def\uPnskx{\uP{}^{\mathbf{n},s}_{\uk+k\ux}}

\title[Optimal cubic element for biharmonic equation]{An optimal piecewise cubic nonconforming finite element scheme for the planar biharmonic equation on general triangulations}

\author{Shuo Zhang}
\address{LSEC, Institute of Computational Mathematics and Scientific/Engineering Computing, Academy of Mathematics and System Sciences, Chinese Academy of Sciences, Beijing 100190, People's Republic of China}
\email{szhang@lsec.cc.ac.cn}
\thanks{The author is supported by NCMIS of CAS and NSFC Grants Nos 11471026 and 11871465.}

\subjclass[2000]{Primary 65N30, 35Q60, 76E25, 76W05.}

\keywords{biharmonic equation, discretized Stokes complex, optimal finite element scheme}

\begin{abstract}
This paper presents a nonconforming finite element scheme for the planar biharmonic equation which applis piecewise cubic polynomials ($P_3$) and possesses $\mathcal{O}(h^2)$ convergence rate in energy norm on general shape-regular triangulations. Both Dirichlet and Navier type boundary value problems are studied. The basis for the scheme is a piecewise cubic polynomial space, which can approximate the $H^4$ functions with $\mathcal{O}(h^2)$ accuracy in broken $H^2$ norm. Besides, an equivalence $(\nabla_h^2\ \cdot,\nabla_h^2\ \cdot)=(\Delta_h\ \cdot,\Delta_h\ \cdot)$, which is usually not true for nonconforming finite element spaces, is proved on the newly designed spaces.  

The finite element space does not correspond to a finite element defined with Ciarlet's triple; however, a set of locally supported basis functions of the finite element space is still figured out. The notion of the finite element Stokes complex plays an important role in the analysis and also the construction of the basis functions. 
\end{abstract}

\maketitle


%
%
%
\section{Introduction}
In order to obtain a simpler interior structure, in the study of the numerical analysis of partial differential equations, lower-degree polynomials are often expected to be used with respect to the same convergence rate. Finite element schemes with polynomials of degrees not higher than $k$ for $H^{\bf m}$ problem that possess convergence rates of $\mathcal{O}(h^{\bf k+1-m})$ in energy norm for solutions in $H^{\bf k+1}$ are called {\bf optimal}. According to \cite{Lin.Q;Xie.H;Xu.J2014}, this illustrates both the highest accuracy with respect to certain degree of polynomials and the smallest shape function space with respect to certain convergence rate, and is a critical characteristic for the finite element methodology.  Motivated by the fundamental problem aforementioned, this paper concerns the optimal finite element scheme for the biharmonic equation with piecewise cubic polynomials on general triangulations.

\paragraph{\bf A brief review of relevant works} Papers on optimal schemes can be found focusing mainly on {\it low-order} problems. For the lowest-differentiation-order ($H^1$) elliptic problems, the standard Lagrangian elements can yield optimal approximation on the simplicial grids of an arbitrary dimension. Further, the optimal nonconforming element spaces of $k$-th degrees are also constructed, c.f., e.g., \cite{Crouzeix.M;Raviart.P1973}, \cite{Fortin.M;Soulie.M1983}, and \cite{Crouzeix.M;Falk.R1989} for the cases $k=1$, $k=2$, and $k=3$, respectively, and \cite{Baran.A;Stoyan.G2007} for general $k$. For higher-differentiation-order ($H^m$, $m>1$) elliptic problems, minimal-degree approximations have been studied with the lowest accuracy order. Specifically, when the subdivision comprises simplexes, a systematic family of nonconforming finite elements has been proposed by \cite{Wang.M;Xu.J2013} for $H^m$ elliptic partial differential equations in $\mathbb{R}^n$ for any $n\geqslant m$ with polynomials with degree $m$. Besides, the constructions of finite element functions that do not depend on cell-by-cell definitions can be found in \cite{Park.C;Sheen.D2003,Hu.J;Shi.Z2005,Zhang.S2018ima}, wherein minimal-degree finite element spaces are defined on general quadrilateral grids for $H^1$ and $H^2$ problems. In contrast to these existing lowest order researches, the construction of higher-accuracy-order optimal schemes for higher-differentiation-order problems is complicated, even for the planar biharmonic problem.

Conforming finite elements for biharmonic equation requires the $C^1$ continuity assumption. It is well-known that with polynomials of degrees $k\geqslant 5$, spaces of $\mathcal{C}^1$ continuous piecewise polynomials can be constructed with local basis. Moreover, these spaces perform optimal approximations of $H^2$ functions with sufficient smoothness\cite{Argyris.J;Fried.I;Scharpf.D1968,Zenisek.A1974,Morgan.J;Scott.R1975,Zenisek.A1970,deBoor.C;Hollig.K1988}. With polynomials of degrees $2\leqslant k\leqslant 4$, spaces of $\mathcal{C}^1$ continuous piecewise polynomials can be shown to provide optimal approximation when the triangulation is of some special structures, such as the Powell--Sabin and Powell--Sabin--Heindl triangulations\cite{Powell.M;Sabin.M1977,Heindl.G1979,Powell.M1976}, criss-cross triangulations \cite{Zhang.Shangyou2008}, Hsieh--Clough--Tocher triangulation\cite{Clough.R;Tocher.J1965}, and Sander--Veubeke triangulation\cite{Sander.G1964,deVeubeke.BF1968}. The conditions on the grids can be relaxed, but they are generally required on at least some part of the triangulation\cite{Nurnberger.G;Zeilfelder.F2004,Nurnberger.G;Schumaker.L;Zeilfelder.F2004,Chui.C;Hecklin.G;Nurnberger.G;Zeilfelder.F2008}. On general triangulations, as is shown in \cite{deBoor.C;Jia.R1993}, optimal approximation cannot be obtained with $\mathcal{C}^1$ continuous piecewise polynomials of degree $k<5$. It is illustrated in \cite{Alfeld.P;Piper.B;Schumaker.L1987} that not all the basis functions can be determined locally on general grids. We would particularly recall a counterexample that, as studied in \cite{Boor.C;DeVore.R1983,Boor.C;Hollig.K1983,Babuska.I;Suri.M1992}, the $\mathcal{C}^1-P_3$ scheme is only $\mathcal{O}(h)$ order convergent in energy norm on a triangulation obtained by subdividing a rectangular domain with three groups of parallel lines (cf. Figure \ref{fig:counter}), which is even though one of the simplest and most regular triangulations.

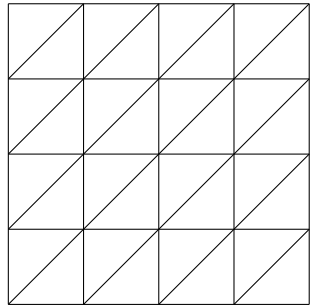
\begin{figure}[htbp]
\label{fig:counter}
\centering
\begin{tikzpicture}
	\draw(-2,0)--(2,0);
	\draw(-2,2)--(2,2);
	\draw(-2,-2)--(2,-2);
	\draw(-2,-1)--(2,-1);
	\draw(-2,1)--(2,1);
	
	\draw(-2,-2)--(-2,2);
	\draw(2,-2)--(2,2);
	\draw(-1,-2)--(-1,2);
	\draw(0,-2)--(0,2);
	\draw(1,-2)--(1,2);
	
	\draw(-1,2)--(-2,1);
	\draw(0,2)--(-2,0);
	\draw(1,2)--(-2,-1);
	\draw(2,2)--(-2,-2);
	\draw(2,1)--(-1,-2);
	\draw(2,0)--(0,-2);
	\draw(2,-1)--(1,-2);
	
\end{tikzpicture}
\caption{On the triangulations of this profile, optimal finite element scheme for biharmonic equation with piecewise cubic polynomials, conforming or nonconforming, is NOT yet known. }
\end{figure}

In contrast, a nonconforming finite element methodology, namely, the Morley element \cite{Morley.L1968}, which uses piecewise quadratic polynomials with a convergence rate of $\mathcal{O}(h)$, was shown to perform optimally for $k=2$. However, to the best of our knowledge, optimal piecewise cubic or quartic finite element schemes (either conforming or nonconforming) for a planar biharmonic equation with $\mathcal{O}(h^2)$ or $\mathcal{O}(h^3)$ convergence rate have not been discovered. We remark that several $\mathcal{O}(h^2)$ ordered finite element methods are designed with piecewise cubic polynomials enriched with higher-degree bubbles (e.g., \cite{Guzman.J;Leykekhman.D;Neilan.M2012,Wang.M;Zu.P;Zhang.S2012}). As the degrees of the functions exceed three, these methods are not considered optimal here. For a biharmonic problem in higher dimensions and other problems with higher orders, bigger difficulties can be expected.

\paragraph{\bf Main results of the present paper} In this paper, a space $B^3_h$ is constructed with piecewise cubic polynomials, whose subspaces $B^3_{ht}$ and $B^3_{h0}$ are proved to provide optimal approximation of $H^2\cap H^1_0$ and $H^2_0$, respectively. Finite element schemes that apply the two subspaces to the biharmonic equation with Navier and Dirichlet boundary conditions, respectively, are nonconforming, but the consistency errors are both  of $\mathcal{O}(h^2)$ order. Thus the finite element schemes are optimal, and the optimality can be proved on any shape regular grids on both convex and nonconvex polygonal domains. 

Further, for any two functions $w_h,v_h\in B^3_{ht}$, it can be proved that $(\nabla_h^2w_h,\nabla_h^2v_h)=(\Delta_hw_h,\Delta_hv_h)$, which is seldom true for nonconforming finite elements. This property makes the finite element spaces fit for the discretization of biLaplacian operator $\Delta\mathcal{A}\Delta$ with varying coefficient $\mathcal{A}$. 

Two approaches of implementing the schemes are suggested. One is to figure out their local basis functions: the finite element scheme does not correspond to a finite element in Ciarlet's triple; but the finite element spaces do possess local basis functions that each is supported in the patch of a vertex or the patch of an edge. The other is to decompose the finite element scheme to three decoupled subproblems, which are either a Poisson system or a Stokes system, to solve sequentially. Note that the optimal solvers for discrete Poisson system and Stokes problem have been very well developed, and the latter approach suggests indeed a method to solve the finite element problem with optimal cost.

\paragraph{\bf Main technical ingredient of the present paper} For the nonconforming finite element space $B_h^3$, to control the consistency error, sufficient restrictions on the interfacial continuity have to be imposed across the edges of the cells. However, {\it the constraints on the continuity are overdetermined in comparison to local shape functions}; hence, the {\it global finite element space} do not correspond to a {\it local finite element} defined with Ciarlet's triple. The functions can be viewed as {\it nonsmooth splines}.  Consequently, several challenges arise in both theoretical analysis and practical implementation, even on counting the dimension of the space. To avoid these challenges, in this paper, indirect methods are adopted; namely, the construction and utilization of discretized Stokes complexes constitute the bulk of the task in the construction of the space and schemes. This {\bf indirect} approach is viewed as the main ingredient of the paper.  

Discretized Stokes complexes are finite element analogs of the 2D Stokes complexes (or the de Rham complex with enhanced regularity), which read corresponding to the boundary condition: 
\begin{equation}
\begin{array}{ccccccccc}
0 & \xrightarrow{\bs{inclusion}} & H^2_0 & \xrightarrow{\bs{\mrm{\nabla}}} & (H^1_0)^2 & \xrightarrow{\mrm{rot}} & L^2_0  & \xrightarrow{\bs{\int\cdot}} & 0.
\end{array}
\end{equation}
and
\begin{equation}
\begin{array}{ccccccccc}
0 & \xrightarrow{\bs{inclusion}}  & H^2\cap H^1_0 & \xrightarrow{\bs{\mrm{\nabla}}} & (H^1)^2\cap H_0(\rot) & \xrightarrow{\mrm{rot}} & L^2_0  & \xrightarrow{\bs{\int\cdot}} & 0.
\end{array}
\end{equation}
In the complex, the combination of the successive two operators vanish, and the kernel of the latter one is exactly the range of the former one. The finite element complexes have been widely used for stability analysis (c.f.\cite{Arnold.D;Falk.R;Winther.R2006}), and, in this paper, the important role they play is four-folded:
\begin{enumerate}
\item It is used for approximation analysis. We construct two discretized Stokes complexes that start with finite element spaces $B^3_{h0}$ and $B^3_{ht}$, respectively, for $H^2$ and estimates the approximation error of $B^3_{h0}$($B^3_{ht}$) by estimating the discretization error of the auxiliary finite element discretization of the Stokes problem. This way, we prove the $\mathcal{O}(h^2)$ approximation accuracy of $B^3_{h0}$($B^3_{ht}$) in energy norm for $H^4$ functions. Moreover, the proof does not require a convexity assumption on the domain.
\item Different from existing nonconforming finite elements such as the Morley element, for $w_h,v_h\in B^3_{ht}$, $(\nabla_h^2w_h,\nabla_h^2v_h)=(\Delta_hw_h,\Delta_hv_h)$, the operations done cell by cell. This makes the finite element suitable for, e.g., $\Delta\mathcal{A}\Delta$ with varying coefficient $\mathcal{A}$; see \cite{Xi.Y;Ji.X;Zhang.S2019arxiv} for a practical application. This property is, once again, proved by the aid of the discretized Stokes complex. 
\item Further, though the finite element space does not correspond to a finite element defined in Ciarlet's triple, the finite element spaces do admit a set of basis functions, each of which is supported in a patch of a vertex or a patch of an edge. Again, the discretized Stokes complexes play crucial roles in proving the existence of the locally supported basis functions. 
\item Finally, we remark, beyond bringing ease in constructing and analyzing the schemes, the discretized Stokes complex is also helpful to the implementation and numerical solution of the systems by the aid of the discretized Poisson and discretized Stokes systems; we also refer to  \cite{Xu.J1996as,Hiptmair.R;Xu.J2007,Xu.J2010icm,Zhang.S;Xu.J2014,Ruesten.T;Winther.R1992,Grasedyck.L;Wang.L;Xu.J2016,Feng.C;Zhang.S2016} for relevant discussions.  
\end{enumerate}

\paragraph{\bf Bibliographic remark} This paper collects some original results from the unpublished arXiv preprints 1805.03851(\cite{Zhang.S2018}) authored by the same author as the present paper.

\paragraph{\bf Organization of the paper} The remaining of the paper is organized as follows. Section \ref{sec:fes} presents some finite element spaces and finite element complexes. Section \ref{sec:cubic}  presents two optimal nonconforming finite element schemes, including the construction, theoretical analysis, for the two kinds of boundary value problems, respectively. Two approaches of implementation are given in Section \ref{sec:imp}. Finally, in Section \ref{sec:conc}, some conclusions and further discussions are given.

\section{Finite element spaces and finite element complexes}
\label{sec:fes}

\subsection{Preliminaries}

In what follows, we use $\Omega$ to denote a simply connected polygonal domain, and $\nabla$, $\curl$, $\dv$, $\rot$, and $\nabla^2$ to denote the gradient operator, curl operator, divergence operator, rot operator, and Hessian operator, respectively. As usual, we use $H^2(\Omega)$, $H^2_0(\Omega)$, $H^1(\Omega)$, $H^1_0(\Omega)$, $H(\rot,\Omega)$, $H_0(\rot,\Omega)$, and $L^2(\Omega)$ to denote certain Sobolev spaces, and specifically, denote $\displaystyle L^2_0(\Omega):=\{w\in L^2(\Omega):\int_\Omega w dx=0\}$, $\undertilde{H}{}^1_0(\Omega):=(H^1_0(\Omega))^2$, and $\uH{}^1_t(\Omega)=(H^1(\Omega))^2\cap H_0(\rot,\Omega)$. Furthermore, we denoted vector-valued quantities by $``\undertilde{~}"$, while $\uv{}^1$ and $\uv{}^2$ denote the two components of the function $\uv$. We use $(\cdot,\cdot)$ to represent $L^2$ inner product, and $\langle\cdot,\cdot\rangle$ to denote the duality between a space and its dual. Without ambiguity, we use the same notation $\langle\cdot,\cdot\rangle$ for different dualities, and it can occasionally be treated as $L^2$ inner product for certain functions. We use the subscript $``\cdot_h"$ to denote the dependence on triangulation. In particular, an operator with the subscript $``\cdot_h"$ indicates that the operation is performed cell-by-cell. Finally, $\cequiv$ denotes equality up to a constant. The hidden constants depend on the domain, and when triangulation is involved, they also depend on the shape regularity of the triangulation, but they do not depend on $h$ or any other mesh parameter.

Let $\mathcal{T}_h$ be a shape-regular triangular subdivision of $\Omega$ with mesh size $h$, such that $\overline\Omega=\cup_{T\in\mathcal{T}_h}\overline T$. Denote by $\mathcal{E}_h$, $\mathcal{E}_h^i$, $\mathcal{E}_h^b$, $\mathcal{X}_h$, $\mathcal{X}_h^i$, $\mathcal{X}_h^b$ and $\mathcal{X}_h^c$ the set of edges, interior edges, boundary edges, vertices, interior vertices, boundary vertices and corners, respectively. For any edge $e\in\mathcal{E}_h$, denote by $\mathbf{n}_e$ and $\mathbf{t}_e$ the unit normal and tangential vectors of $e$, respectively, and denote by $\llbracket\cdot\rrbracket_e$ the jump of a given function across $e$; if particularly $e\in\mathcal{E}_h^b$, $\llbracket\cdot\rrbracket_e$ stands for the evaluation of the function on $e$. The subscript ${\cdot}_e$ can be dropped when there is no ambiguity brought in. 

Denote 
$$
\mathcal{X}_h^{b,+1}:=\{a\in\mathcal{X}_h^i,\ a\ \mbox{is\ connected\ to}\ \mathcal{X}_h^b\ by\ e\in\mathcal{E}_h^i\},\ \mbox{and}\ \ \mathcal{X}_h^{i,-1}:=\mathcal{X}_h^i\setminus\mathcal{X}_h^{b,+1};
$$ 
further, denote with $\mathcal{X}^{i,-(k-1)}_h\neq\emptyset$,
$$
\mathcal{X}_h^{b,+k}:=\{a\in\mathcal{X}_h^{i,-(k-1)},\ a\ \mbox{is\ connected\ to}\ \mathcal{X}_h^{b,+(k-1)}\ by\ e\in\mathcal{E}_h^i\}, \ \mbox{and}\ \ \mathcal{X}_h^{i,-k}:=\mathcal{X}_h^{i,-(k-1)}\setminus\mathcal{X}_h^{b,+k}.
$$
The smallest $k$ such that $\mathcal{X}_h^{i,-(k-1)}=\mathcal{X}_h^{b,+k}$ is called the number of levels of the triangulation.

For a triangle $T$, we use $P_k(T)$ to denote the set of polynomials on $K$ of degrees not higher than $k$. In a similar manner, $P_k(e)$ is defined on an edge $e$. We define $\uP{}_k(T)=P_k(T)^2$ and similarly is $\uP{}_k(e)$ defined. We use $a_i$, $i=1,2,3$ for the vertices of $T$ in an anticlockwise order, $e_i$, $i=1,2,3$ for the edges opposite to $a_i$, respectively, and $\lambda_i$, $i=1,2,3$ the barycentric coordinates. 

Also, we denote basic finite element spaces by
\begin{itemize}
\item $\mathcal{L}^k_h:=\{w\in H^1(\Omega):w|_T\in P_k(T),\ \forall\,T\in\mathcal{T}_h\}$, $\mathcal{L}^k_{h0}:=\mathcal{L}^k\cap H^1_0(\Omega)$, $k\geqslant 1$;
\item $\mathbb{P}^{k}_h:=\{w\in L^2(\Omega):w|_T\in P_{k}(T)\}
$, $\mathbb{P}^{k}_{h0}:=\mathbb{P}^k_h\cap L^2_0(\Omega)$, $k\geqslant 0$;
\item $\uS{}_h^k:=(\mathbb{P}^k_h)^2\cap \uH{}^1(\Omega)$, $k\geqslant 1$, $\uS{}^k_{ht}:=\uS{}^k_h\cap H_0(\rot,\Omega)$ and $\uS{}^k_{h0}:=\uS{}^k_h\cap\uH{}^1_0(\Omega)$;
\item $\uG{}_h^k:=\{\uv\in (\mathbb{P}^k_h)^2:\int_ep_e\llt\uv^j\rrt=0,\ \forall\, p_e\in P_{k-1}(e),\ \forall\, e\in \mathcal{E}_h^i,\ j=1,2\}$, $k\geqslant 1$, $\uG{}^k_{ht}:=\{\uv\in\uG{}^k_h:\int_ep_e\uv\cdot\mathbf{t}_e=0,\ \forall\,e\in \mathcal{E}_h^b\ \mbox{and}\ p_e\in P_{k-1}(e)\}$,  and $\uG{}^k_{h0}:=\{\uv\in\uG{}^k_h:\int_ep_e\uv^j=0,\ \forall\,e\in \mathcal{E}_h^b\ \mbox{and}\ p_e\in P_{k-1}(e),\ j=1,2\}.$ 
\end{itemize} 
Namely, $\uS{}^k_h$ consists of continuous functions, and $\uG{}^k_h$ consists of $(k-1)^{\rm th}$ order moment-continuous functions. Particularly, the space $\uG{}^2_h$ corresponds to the famous Fortin-Soulie element \cite{Fortin.M;Soulie.M1983}. The following stability result is well-known.

\begin{lemma}\label{lem:pGL}\cite{Fortin.M;Soulie.M1983}
There exists a generic constant $C$ depending on the domain and the regularity of the grid, such that 
\begin{equation}\label{eq:inf-supGL}
\sup_{\uv{}_h\in \uG{}^2_{h0},\|\nabla_h\uv{}_h\|_{0,\Omega}=1}(\dv_h\uv{}_h,q_h)\geqslant C\|q_h\|_{0,\Omega},\ \ \forall\,q_h\in\mathbb{P}^{1}_{h0}.
\end{equation}
\end{lemma}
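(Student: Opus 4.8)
The conclusion \eqref{eq:inf-supGL} is the discrete inf-sup stability of the Fortin--Soulie element (with vanishing boundary moments) paired with the discontinuous pressure space $\mathbb{P}^1_{h0}$, and the plan is to deduce it from the continuous inf-sup condition via Fortin's criterion. First I would invoke the fact that on the polygonal domain $\Omega$ the pair $((H^1_0(\Omega))^2,L^2_0(\Omega))$ is stable for the form $(\dv\uv,q)$: for every $q\in L^2_0(\Omega)$ there is $\uv\in(H^1_0(\Omega))^2$ with $\dv\uv=q$ and $\|\nabla\uv\|_{0,\Omega}\leqslant C\|q\|_{0,\Omega}$. Next, assume we are given a linear \emph{Fortin operator} $\Pi_h:(H^1_0(\Omega))^2\to\uG{}^2_{h0}$ with the commuting property $(\dv_h\Pi_h\uv,q_h)=(\dv\uv,q_h)$ for all $q_h\in\mathbb{P}^1_h$ and the stability $\|\nabla_h\Pi_h\uv\|_{0,\Omega}\leqslant C\|\nabla\uv\|_{0,\Omega}$. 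Then \eqref{eq:inf-supGL} is immediate: for $q_h\in\mathbb{P}^1_{h0}$ with $q_h\neq 0$ (the zero case being trivial), take the corresponding $\uv$ with $\dv\uv=q_h$, so that $\Pi_h\uv\neq 0$, $(\dv_h\Pi_h\uv,q_h)=\|q_h\|_{0,\Omega}^2$, and $\|\nabla_h\Pi_h\uv\|_{0,\Omega}\leqslant C\|q_h\|_{0,\Omega}$; testing the supremum against $\Pi_h\uv/\|\nabla_h\Pi_h\uv\|_{0,\Omega}$ yields \eqref{eq:inf-supGL}.

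Thus everything reduces to constructing $\Pi_h$. Integrating by parts cell by cell and using $q_h|_T\in P_1(T)$ (so $\nabla q_h|_T$ is constant and $q_h|_e\in P_1(e)$ on each edge), one checks that the commuting identity $(\dv_h\Pi_h\uv-\dv\uv,q_h)=0$ holds for all $q_h\in\mathbb{P}^1_h$ as soon as $\Pi_h$ preserves, on every cell $T$, the mean $\int_T\uv$, and, on every edge $e$, the order-$\leqslant 1$ moments $\int_e p_e\,(\uv\cdot\mathbf{n}_e)$ of the normal trace; and since for $\uv\in(H^1_0(\Omega))^2$ the edge moments of $\uv$ are single-valued across interior edges and vanish on $\partial\Omega$, it suffices to ask moreover that $\Pi_h\uv$ reproduce \emph{all} order-$\leqslant 1$ edge moments of $\uv$, which then automatically puts $\Pi_h\uv$ into $\uG{}^2_{h0}$. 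The stability bound would follow from the usual reference-cell scaling plus a Bramble--Hilbert estimate, once $\Pi_h$ reproduces constants. So the whole matter is a cellwise interpolation problem on $\uP{}_2(T)$ with the Fortin--Soulie degrees of freedom.

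The hard part is exactly this local step, and the difficulty is intrinsic to the Fortin--Soulie element: the fourteen natural local degrees of freedom on $T$ (twelve edge moments and the two components of the cell average) are not independent on the twelve-dimensional space $\uP{}_2(T)$ --- a quadratic with vanishing order-$\leqslant 1$ moments on all three edges is a nonzero multiple of $p_0:=3(\lambda_1^2+\lambda_2^2+\lambda_3^2)-2$, so the six scalar edge moments obey one linear relation, which the data coming from an arbitrary $H^1$ function need not satisfy. Circumventing this requires some bookkeeping. One option is to carry out the cellwise construction compatibly with that single relation, as in \cite{Fortin.M;Soulie.M1983} (match a consistent spanning subset of the moments together with the cell average; this still determines a $P_2$ element uniquely and reproduces $P_1$). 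A second option avoids an explicit $\Pi_h$ altogether and uses the macroelement technique with each triangle as its own macroelement: the elements of $\uP{}_2(T)$ whose order-$\leqslant 1$ edge moments all vanish form the two-dimensional space $\{(\alpha p_0,\beta p_0):\alpha,\beta\in\mathbb{R}\}$, the set $\{\alpha\,\partial_x p_0+\beta\,\partial_y p_0:\alpha,\beta\in\mathbb{R}\}$ is a two-dimensional subspace of $P_1(T)$, and its $L^2(T)$-orthogonal complement is precisely the constants, because $\int_T\nabla p_0=\int_{\partial T}p_0\,\mathbf{n}=\sum_i\mathbf{n}_{e_i}\int_{e_i}p_0=0$ (on each edge $p_0$ restricts to the degree-$2$ Legendre polynomial, whose mean vanishes); hence the local macroelement condition holds, and the residual piecewise-constant pressures are controlled by the continuous inf-sup of the first paragraph. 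Either way, I expect this accounting of the one relation, rather than any genuinely new obstruction, to be the crux.
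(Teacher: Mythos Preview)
The paper does not give its own proof of this lemma: it is stated with the citation \cite{Fortin.M;Soulie.M1983} and used as a known result, so there is nothing in the paper to compare your argument against.

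Your outline is correct and recovers the standard argument. The macroelement route is particularly clean here and your local computation is right: the kernel of the order-$\leqslant 1$ edge-moment functionals on $\uP{}_2(T)$ is the two-dimensional bubble space spanned by $(\phi_T,0)$ and $(0,\phi_T)$ with $\phi_T=(\lambda_1^2+\lambda_2^2+\lambda_3^2)-2/3$ (the paper's normalisation of your $p_0$); since the Hessian of $\phi_T$ is positive definite, $\partial_x\phi_T$ and $\partial_y\phi_T$ are linearly independent linears, and $\int_T\nabla\phi_T=0$ forces their $L^2(T)$-orthogonal complement in $P_1(T)$ to be exactly the constants. One point to make explicit when you write it up: with single-triangle macroelements, Stenberg's criterion still leaves the global piecewise-constant pressure modes to be controlled separately; you flag this, but it is worth saying concretely that this follows from the stability of the conforming subpair $\uS{}^2_{h0}$--$\mathbb{P}^0_{h0}$ (continuous $P_2$ velocities against discontinuous $P_0$ pressures), which is classical, so that the needed control is inherited by $\uG{}^2_{h0}\supset\uS{}^2_{h0}$. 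The Fortin-operator alternative you mention, navigating the single linear relation among the fourteen local functionals, is essentially how \cite{Fortin.M;Soulie.M1983} proceeds.
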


\begin{remark}
By the symmetry between the two components of $\uH{}^1(\Omega)$, Lemma \ref{lem:pGL} remains true when ``$\dv{}_h$" is replaced by ``$\rot_h$."
\end{remark}

Denote $\undertilde{\mathcal{B}}{}^2_{h0}:=\{\undertilde{\phi}{}_h:(\undertilde{\phi}{}_h|_T)^j\in {\rm span}\{(\lambda_1^2+\lambda_2^2+\lambda_3^2)-2/3\}, \ j=1,2,\ \forall\,T\in\mathcal{T}_h\}$ and evidently the first order moments of $\undertilde{\phi}{}_h$  vanish along any edge of $\mathcal{T}_h$. Then $\uG{}^2_{h0}=\uS{}^2_{h0}\oplus \undertilde{\mathcal{B}}{}^2_{h0}$ (c.f. \cite{Fortin.M;Soulie.M1983}). Also $\uG{}^2_{ht}=\uS{}^2_{ht}\oplus \undertilde{\mathcal{B}}{}^2_{h0}$. Note that, as is known, $\uG{}^2_{h}=\uS{}^2_{h}+ \undertilde{\mathcal{B}}{}^2_{h0}$ is not a direct sum.  The decomposition can be generalized to even $k$ (c.f. \cite{Baran.A;Stoyan.G2007}). 
\begin{lemma}\label{lem:h1=d+c}
For any $\uw{}_h,\uv{}_h\in\uG{}^2_{ht}$, it holds that 
\begin{equation}\label{eq:h1=d+c}
(\nabla_h\uw{}_h,\nabla_h\uv{}_h)=(\dv{}_h\uw{}_h,\dv_h\uv{}_h)+(\rot_h\uw{}_h,\rot_h\uv{}_h).
\end{equation}
\end{lemma}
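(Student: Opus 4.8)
The plan is to reduce \eqref{eq:h1=d+c} to a single scalar identity and then dispatch it piecewise with the Fortin--Soulie decomposition $\uG{}^2_{ht}=\uS{}^2_{ht}\oplus\undertilde{\mathcal{B}}{}^2_{h0}$. A direct computation gives the pointwise algebraic identity
\[
\dv\uw\,\dv\uv+\rot\uw\,\rot\uv
=\nabla\uw:\nabla\uv+\Theta(\uw,\uv),\qquad
\Theta(\uw,\uv):=\nabla\uw{}^1\times\nabla\uv{}^2+\nabla\uv{}^1\times\nabla\uw{}^2,
\]
where $\nabla f\times\nabla g:=\partial_1f\,\partial_2g-\partial_2f\,\partial_1g$; applied cell by cell it shows that \eqref{eq:h1=d+c} is equivalent to $\sum_{T\in\mathcal{T}_h}\int_T\Theta(\uw_h,\uv_h)\,\dx=0$. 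Since $\Theta$ is bilinear and symmetric, writing $\uw_h=\uw_h^S+\uw_h^B$ and $\uv_h=\uv_h^S+\uv_h^B$ with the first summands in $\uS{}^2_{ht}$ and the second in $\undertilde{\mathcal{B}}{}^2_{h0}$, it suffices to check $\sum_T\int_T\Theta=0$ for the bubble--bubble, the mixed, and the conforming--conforming combinations separately.

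The bubble--bubble part is immediate: on each $T$ both components of a function of $\undertilde{\mathcal{B}}{}^2_{h0}$ are scalar multiples of $b_T:=\lambda_1^2+\lambda_2^2+\lambda_3^2-\tfrac23$, so on $T$ the two rows of the cellwise gradient are proportional and $\Theta(\uw_h^B,\uv_h^B)\equiv0$ pointwise. The mixed part is the heart of the matter. Using that the bubble coefficients of $\uv_h^B|_T$ are constants, one rewrites, on each cell $T$, $\Theta(\uw_h^S,\uv_h^B)=\nabla\psi_T\times\nabla b_T=\rot(\psi_T\nabla b_T)$, where $\psi_T\in P_2(T)$ is the scalar polynomial built from the two components of $\uw_h^S|_T$ with those constant coefficients; hence $\int_T\Theta(\uw_h^S,\uv_h^B)=\int_{\partial T}\psi_T\,\partial_{\mathbf{t}}b_T\dS$. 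The decisive observation is that $b_T$ restricted to \emph{any} edge $e$ of $T$ is the same quadratic, $2t^2-2t+\tfrac13$ in the unit parametrization of $e$, and in particular equals $\tfrac13$ at both endpoints of $e$; a one-line check then gives $\int_e p\,\partial_{\mathbf{t}}b_T\dS=\tfrac13\bigl(p(\text{head of }e)-p(\text{tail of }e)\bigr)$ for every $p\in P_2(e)$. Therefore the three edge integrals telescope around $\partial T$ and $\int_T\Theta(\uw_h^S,\uv_h^B)=0$ \emph{cellwise}; by the symmetry of $\Theta$, the combination $\Theta(\uw_h^B,\uv_h^S)$ is handled identically.

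Finally, $\uw_h^S,\uv_h^S\in\uS{}^2_{ht}\subset(H^1(\Omega))^2\cap H_0(\rot,\Omega)$ are globally continuous and have vanishing tangential trace on every boundary edge. Summing the cellwise Green identities for $\Theta(\uw_h^S,\uv_h^S)$ and using a symmetrized boundary representation, the interior-edge contributions cancel---the boundary integrand is continuous across interior edges because both the components of $\uw_h^S,\uv_h^S$ and their tangential derivatives are---so only $\int_{\partial\Omega}(\cdots)\dS$ survives; on each straight boundary edge $e$, writing $\uw_h^S=w_n\mathbf{n}_e$ and $\uv_h^S=v_n\mathbf{n}_e$ and using that $\mathbf{n}_e$ is constant along $e$, that integrand vanishes identically. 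Hence $\sum_T\int_T\Theta(\uw_h^S,\uv_h^S)=0$, and summing the three parts gives \eqref{eq:h1=d+c}.

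The real obstacle is the mixed $\uS$--$\undertilde{\mathcal{B}}$ piece: it is the only place where the interelement structure of $\uG{}^2_{ht}$ and the explicit shape of the Fortin--Soulie bubble are genuinely used, and the cancellation is invisible cell by cell until $\Theta(\uw_h^S,\uv_h^B)$ is recognised as a pure $\rot$ of $\psi_T\nabla b_T$ and the equal-endpoint-values of $b_T$ along $\partial T$ are exploited. (Alternatively one could compute $\sum_T\int_T\Theta(\uw_h,\uv_h)$ globally, reducing it through the edgewise $P_1$-moment conditions to a sum of vertex terms; but the vanishing of that sum is awkward to establish without, in effect, recovering the discretized Stokes decomposition, so the route above is cleaner.)
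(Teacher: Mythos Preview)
Your proof is correct and uses the same Fortin--Soulie splitting $\uG{}^2_{ht}=\uS{}^2_{ht}\oplus\undertilde{\mathcal{B}}{}^2_{h0}$ as the paper, but the bookkeeping is organised differently. You isolate the algebraic defect $\Theta(\uw,\uv)=\nabla\uw{}^1\times\nabla\uv{}^2+\nabla\uv{}^1\times\nabla\uw{}^2$ and kill it in three separate cases; the bubble--bubble piece vanishes pointwise, the mixed piece is turned into a boundary integral that telescopes around $\partial T$ via the equal vertex values of $b_T$, and the conforming piece is handled by a global Green identity with a symmetrized boundary integrand that vanishes under the $H_0(\rot)$ condition on straight edges. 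The paper instead proves the identity directly for $\uw{}_h\in\uG{}^2_{ht}$ and $\uv{}_h\in\undertilde{\mathcal{B}}{}^2_{h0}$ in one stroke (covering both the mixed and the bubble--bubble cases) by two integrations by parts and the single fact that the $P_1$-moments of the bubble vanish on each edge; the conforming--conforming case is then quoted as the classical identity for $\uH{}^1_t(\Omega)$. The paper's route is shorter and uses only the moment condition without unpacking the explicit shape of $b_T$; your route is more explicit and reveals why the mixed cancellation happens geometrically (the telescoping), at the price of a slightly longer computation. Both are sound; your ``one-line check'' of $\int_e p\,\partial_{\mathbf t}b_T\,\dS=\tfrac13\bigl(p(\text{head})-p(\text{tail})\bigr)$ is, under the hood, exactly the $P_1$-orthogonality $\int_e q\,b_T\,\dS=0$ for $q\in P_1(e)$ that the paper invokes.
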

\begin{proof}
Firstly,  \eqref{eq:h1=d+c} holds for any $\uw{}_h,\uv{}_h\in\uS{}^2_{ht}\subset \uH{}^1_t(\Omega)$. Secondly, \eqref{eq:h1=d+c} holds for any $\uw{}_h\in \uG{}^2_{ht}$ and $\uv{}_h\in\mathcal{B}_{h0}$; actually, for any $K\in\mathcal{T}_h$,
\begin{multline*}
\int_K\nabla \uw{}_h:\nabla \uv{}_h=-\int_K\Delta \uw{}_h\uv{}_h+\int_{\partial K}\partial_{\bf n}\uw{}_h\uv{}_h
= -\int_K\Delta \uw{}_h\uv{}_h= -\int_K(\nabla\dv+\curl\rot)\uw{}_h\uv{}_h 
\\
= -\int_K(\nabla\dv+\curl\rot)\uw{}_h\uv{}_h+\int_{\partial K}(\dv\uw{}_h\uv{}_h\cdot\mathbf{n}+\rot\uw{}_h\uv{}_h\cdot\mathbf{t})
=\int_K\dv\uw{}_h\dv\uv{}_h+\int_K\rot\uw{}_h\rot\uv{}_h;
\end{multline*}
here we have used the fact that $\partial_{\mathbf{n}}\uw{}_h$, $\dv\uw{}_h$ and $\rot\uw{}_h$ are all linear polynomials along the edges of $K$ and that the first order moments of $\uv{}_h$ vanish along the edges of $K$.

Now, given $\uw{}_h,\uv{}_h\in\uG{}^2_{ht}$, there exist uniquely $\uw{}_h^1,\uv{}_h^1\in\uS{}^2_{ht}$ and $\uw{}_h^2,\uv{}_h^2\in\undertilde{\mathcal{B}}{}^2_{h0}$, such that 
$$
\uw{}_h=\uw{}^1_h+\uw{}^2_h,\ \ \mbox{and}\ \ \uv{}_h=\uv{}^1_h+\uv{}^2_h.
$$
Thus 
$$
(\nabla_h\uw{}_h,\nabla_h\uv{}_h)=(\nabla_h\uw{}^1_h,\nabla_h\uv{}^1_h)+(\nabla_h\uw{}^1_h,\nabla_h\uv{}^2_h)+(\nabla_h\uw{}^2_h,\nabla_h\uv{}^1_h)+(\nabla_h\uw{}^2_h,\nabla_h\uv{}^2_h),
$$
and $(\dv_h\uw{}_h,\dv_h\uv{}_h)+(\rot_h\uw{}_h,\rot_h\uv{}_h)$ can be decomposed to four corresponding parts. Then \eqref{eq:h1=d+c} can be established for every pair of the parts, and the proof is completed. 
\end{proof}

\begin{remark}
It is known that \eqref{eq:h1=d+c} holds for $\uH{}^1_t$ functions but in general not for nonconforming finite element functions (such as the Crouzeix-Raviart element functions). This lemma reveals that the nonconforming space $\uG{}^2_{ht}$ is in some sense like a conforming one. 
\end{remark}

\subsection{An auxiliary finite element Stokes complex}

Given a grid $\mathcal{T}_h$, define
\begin{itemize}
\item $A^3_h:=\{w_h\in L^2(\Omega):w_h|_T\in P_3(T); w_h(a)\ \mbox{is\ continuous\ at}\ a\in\mathcal{X}_h\}$;
\item $A^3_{h0}:=\{w_h\in A^3_h:w_h(a)=0\ at\ a\in\mathcal{X}_h^b\};$
\item $\uG{}_h^{\rm 2,r}:=\{\uv\in(\mathbb{P}^2_h)^2;\ \int_e\llt \uv\cdot\mathbf{t}_e\rrt=0,\ \forall\,e\in \mathcal{E}_h^i\}$;
\item $\uG{}_{h0}^{\rm 2,r}:=\{\uv\in\uG{}_h^{\rm 2,r},\ \int_e \uv\cdot\mathbf{t}_e=0,\ \forall\,e\in \mathcal{E}_h^b\}.$
\end{itemize}

\begin{lemma}\label{lem:cmc}
A finite element complex is given by
\begin{equation}
\begin{array}{ccccccccc}
0 & \xrightarrow{\bs{inclusion}} & A^3_{h0} & \xrightarrow{\bs{\mrm{\nabla}}_h} & \uG{}_{h0}^{\rm 2,r} & \xrightarrow{\mrm{rot}_h} & \mathbb{P}_{h0}^1  & \xrightarrow{\int\cdot} & 0.
\end{array}
\end{equation}
\end{lemma}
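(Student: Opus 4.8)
The plan is to verify the three defining properties of a finite element complex in turn: (i) each consecutive composition vanishes, (ii) the sequence is exact at each interior node, and (iii) the final map $\mathrm{rot}_h$ is surjective onto $\mathbb{P}^1_{h0}$ (equivalently, the map labelled $\int\cdot$ has the right kernel). Properties (i) are essentially pointwise/cell-by-cell. For the first composition, if $w_h\in A^3_{h0}$ then $\nabla_h w_h$ is a piecewise $P_2$ vector field; since $w_h$ is continuous at every vertex, the tangential trace of $\nabla_h w_h$ along an edge $e$ integrates (by the fundamental theorem of calculus on each side) to $w_h$ evaluated at the two endpoints of $e$, so $\int_e\llbracket\nabla_h w_h\cdot\mathbf{t}_e\rrbracket = 0$ on interior edges and $\int_e\nabla_h w_h\cdot\mathbf{t}_e = 0$ on boundary edges because $w_h$ vanishes at boundary vertices; hence $\nabla_h w_h\in\uG{}^{\,2,\mathrm{r}}_{h0}$. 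That $\mathrm{rot}_h\nabla_h = 0$ cell-by-cell is immediate, and $\int_\Omega\mathrm{rot}_h\uv = \sum_e\int_e\llbracket\uv\cdot\mathbf t_e\rrbracket \pm (\text{boundary terms}) = 0$ for $\uv\in\uG{}^{\,2,\mathrm{r}}_{h0}$, so the range lies in $\mathbb{P}^1_{h0}$.

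Next I would establish exactness at $\uG{}^{\,2,\mathrm{r}}_{h0}$, i.e.\ that any $\uv\in\uG{}^{\,2,\mathrm{r}}_{h0}$ with $\mathrm{rot}_h\uv=0$ is $\nabla_h w_h$ for some $w_h\in A^3_{h0}$. Cell by cell, $\mathrm{rot}(\uv|_T)=0$ and $\uv|_T\in\uP{}_2(T)$ force $\uv|_T = \nabla p_T$ for a unique $p_T\in P_3(T)$ with, say, zero mean; the task is to show the local primitives $p_T$ can be glued (after adjusting additive constants) into a vertex-continuous global function that vanishes at boundary vertices. The tangential-moment condition $\int_e\llbracket\uv\cdot\mathbf t_e\rrbracket=0$ says exactly that the jump of $p_T$ across each interior edge, which a priori is an affine function of arc length (it is a $P_3$ polynomial but its derivative $\llbracket\uv\cdot\mathbf t_e\rrbracket$ along $e$ is $P_2$ — here one uses that $\uv\cdot\mathbf t_e$ restricted to $e$ is in $P_2(e)$ so the jump of the primitive is a genuine polynomial of degree $\le 3$ whose integral vanishes)... more carefully: the increment of $p_T$ from one endpoint of $e$ to the other equals $\int_e\uv\cdot\mathbf t_e$ on each side, so the mean-zero tangential condition forces these two increments to agree, which is precisely what is needed to choose the additive constants consistently so that the resulting function is single-valued \emph{at the vertices}. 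A standard spanning-tree argument on the dual graph of $\mathcal{T}_h$ (using simple connectedness of $\Omega$, hence that $\mathcal{T}_h$ has no nontrivial cycles obstructing the gluing) fixes the constants; the boundary condition is handled by anchoring the tree at a boundary vertex and using $\uv\in\uG{}^{\,2,\mathrm{r}}_{h0}$ to propagate the zero value to all boundary vertices along the boundary.

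Exactness at $A^3_{h0}$ means $\nabla_h$ is injective there, which is clear: $\nabla_h w_h=0$ forces $w_h$ constant on each cell, and vertex-continuity plus vanishing at a boundary vertex forces $w_h\equiv 0$. Finally, surjectivity of $\mathrm{rot}_h:\uG{}^{\,2,\mathrm{r}}_{h0}\to\mathbb{P}^1_{h0}$: I would deduce this from Lemma \ref{lem:pGL} (with $\dv_h$ replaced by $\rot_h$ as in the Remark). Indeed $\uG{}^2_{h0}\subset\uG{}^{\,2,\mathrm{r}}_{h0}$ (the zeroth moment of $\llbracket\uv\cdot\mathbf t_e\rrbracket$ vanishing is weaker than all moments up to order $1$ of $\llbracket\uv^j\rrbracket$ vanishing), so the inf--sup inequality already shows $\mathrm{rot}_h$ restricted to the smaller space $\uG{}^2_{h0}$ hits all of $\mathbb{P}^1_{h0}$; a fortiori the map from the larger space is onto. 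Alternatively, a dimension count combined with the exactness already proved gives surjectivity, but invoking the known stability result is cleaner and avoids counting the dimension of $\uG{}^{\,2,\mathrm{r}}_{h0}$.

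The main obstacle is the gluing step in the second paragraph: turning the purely local existence of cubic primitives into a globally vertex-continuous function with the correct boundary behaviour. The subtlety is that $\uG{}^{\,2,\mathrm{r}}_{h0}$ imposes only the \emph{zeroth} tangential moment of the jump on each edge, so the jump of a local primitive need not vanish as a function along the edge — only its endpoint-to-endpoint increment is controlled. One must therefore argue carefully that controlling just this increment is exactly enough to make the vertex values well defined (which is all $A^3_h$ requires), and organize the constant-fixing via a spanning tree so that no cycle obstruction appears; this is where simple connectedness of $\Omega$ enters. The rest of the argument is routine.
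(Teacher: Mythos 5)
Your proposal is correct, but it establishes the crucial exactness property by a genuinely different route from the paper. The paper's proof is a pure counting argument: it obtains surjectivity of $\rot_h$ exactly as you do (Lemma \ref{lem:pGL} with $\dv_h$ replaced by $\rot_h$, using $\uG{}^2_{h0}\subset\uG{}_{h0}^{\rm 2,r}$), notes the inclusion $\nabla_hA^3_{h0}\subset\{\uv{}_h\in\uG{}_{h0}^{\rm 2,r}:\rot_h\uv{}_h=0\}$, and then forces this inclusion to be an equality by checking $\dim(\nabla_hA^3_{h0})+\dim(\mathbb{P}^1_{h0})=\dim(\uG{}_{h0}^{\rm 2,r})$ from the formulas $\dim(A^3_{h0})=\#(\mathcal{X}_h^i)+7\#(\mathcal{T}_h)$, $\dim(\uG{}_{h0}^{\rm 2,r})=\#(\mathcal{E}_h^i)+9\#(\mathcal{T}_h)$, $\dim(\mathbb{P}^1_{h0})=3\#(\mathcal{T}_h)-1$ together with Euler's formula. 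You instead prove exactness at the middle space constructively, by integrating the piecewise rot-free field on each cell and gluing the additive constants of the local cubic primitives. Both routes use the injectivity of $\nabla_h$ on $A^3_{h0}$ and both use simple connectedness of $\Omega$ (Euler's formula in the paper, the structure of dual-graph cycles in yours). The counting proof is shorter but non-constructive and quietly requires that the edge constraints defining $\uG{}_{h0}^{\rm 2,r}$ be independent; your gluing proof avoids all dimension counting and, as a bonus, actually exhibits the potential $w_h$, i.e.\ it realizes the inverse map $(\nabla_h)^{-1}$ that the paper exploits later in Section \ref{sec:imp}. One sentence of yours should be tightened: the dual graph of $\mathcal{T}_h$ certainly has nontrivial cycles; the correct statement is that, for simply connected $\Omega$, every dual cycle is generated by the elementary loops around interior vertices, and around such a loop the compatibility condition holds automatically, because the zeroth tangential-moment condition makes the jump of the local primitives across each of the edges meeting at the central vertex a single number that may be evaluated at that shared vertex, so the sum of the mismatches telescopes to zero. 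Likewise, on the boundary your anchoring argument works because the zero tangential moments on boundary edges equalize the endpoint values along the (connected) boundary, after which one global additive constant makes them vanish. With those details written out, your argument is complete.
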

\begin{proof}
We adopt the standard counting technique.

Firstly, by Lemma \ref{lem:pGL}, $\mathbb{P}^1_{h0}=\rot_h\uG{}^2_{h0}\subset \rot_h\uG{}^{\rm 2,r}_{h0}\subset \mathbb{P}^1_{h0}$. Secondly, $\nabla_hA^3_{h0}\subset\{\uv{}_h\in\uG{}^{\rm 2,r}_{h0}:\rot_h\uv{}_h=0\}$. Thus we only have to check if $\dim(\nabla_hA^3_{h0})+\dim(\mathbb{P}^1_{h0})=\dim(\uG{}^{\rm 2,r}_{h0})$, which can be verified by observing that $\dim(A^3_{h0})=\#(\mathcal{X}_h^i)+7\#(\mathcal{T}_h)$, $\dim(\uG{}^{\rm 2,r}_{h0})=\#(\mathcal{E}_h^i)+9\#(\mathcal{T}_h)$ and $\dim(\uP{}^1_{h0})=3\#(\mathcal{T}_h)-1$, and by the Euler formula. The proof is completed. 
\end{proof}

\subsection{Finite element spaces for $H^2$ and discretized Stokes complexes}
Define
\begin{multline*}
B_h^{3}:=\{v\in  \mathbb{P}^3_h:\ v\ \mbox{is\ continuous\ at}\ a\in\mathcal{X}_h;
\int_e\llt v\rrt=0,\ \mbox{and}\  \int_ep_e\llt\partial_{\bf n}v\rrt=0, \forall\, p_e\in P_1(e),\ \forall\, e\in\mathcal{E}_h^i\},
\end{multline*}
$$
B_{ht}^{3}:=\{v\in B_h^{3}: v(a)=0,\ a\in\mathcal{X}_h^b;\ \int_ev=0,\   \forall\,e\in\mathcal{E}_h^b\},
$$
and
$$
B_{h0}^{3}:=\{v\in B_{ht}^{3}:\ \int_ep_e\partial_{\bf n}v=0,\  \forall\, p_e\in P_1(e),\ \forall\,e\in\mathcal{E}_h^b\}.
$$
According to the boundary conditions on $B^3_{h0}$, we can recognize them as for $H^2$ problems. 
\begin{remark}
Note that, given $v_h\in B^3_h$, on every cell, $v_h$ is embedded in 12 restrictions. We can not expect $B^3_h$ correspond to a finite element defined with Ciarlet's triple. 
\end{remark}

\begin{lemma}\label{eq:imp3}
$B^3_{ht}=\{w_h\in A^3_{h0}:\nabla_h w_h\in \uG{}^2_{ht}\}$, and $B^3_{h0}=\{w_h\in A^3_{h0}:\nabla_h w_h\in \uG{}^2_{h0}\}$. 
\end{lemma}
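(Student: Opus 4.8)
The plan is to show that, once the piecewise-cubic and vertex conditions carried by $A^3_{h0}$ are granted, the interfacial and boundary constraints defining $\uG{}^2_{ht}$ (resp.\ $\uG{}^2_{h0}$), read as conditions on the broken gradient $\nabla_h w_h$, are \emph{equivalent}, one at a time, to the interfacial and boundary constraints defining $B^3_{ht}$ (resp.\ $B^3_{h0}$); both inclusions of each claimed identity then follow simultaneously. First I would record the trivial observations: $B^3_{ht}\subset A^3_{h0}$ by construction, and for $w_h\in A^3_{h0}$ the field $\nabla_h w_h$ already lies in $(\mathbb{P}^2_h)^2$, so membership in $\uG{}^2_{ht}$ or $\uG{}^2_{h0}$ is a matter of the moment conditions alone. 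On each edge $e$ I would split $\nabla_h w_h=(\partial_{\mathbf{n}_e}w_h)\,\mathbf{n}_e+(\partial_{\mathbf{t}_e}w_h)\,\mathbf{t}_e$; since $\{\mathbf{n}_e,\mathbf{t}_e\}$ is a fixed rotation of the Cartesian frame, requiring $\int_e p_e\llbracket(\nabla_h w_h)^j\rrbracket=0$ for $j=1,2$ and all $p_e\in P_1(e)$ is equivalent to requiring both $\int_e p_e\llbracket\partial_{\mathbf{n}_e}w_h\rrbracket=0$ and $\int_e p_e\llbracket\partial_{\mathbf{t}_e}w_h\rrbracket=0$. The normal family is verbatim the condition $\int_e p_e\llbracket\partial_{\mathbf n}v\rrbracket=0$ in the definition of $B^3_h$, so nothing is needed there.

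The core of the argument is the tangential family. Because $\partial_{\mathbf{t}_e}$ differentiates only along $e$, one has $\llbracket\partial_{\mathbf{t}_e}w_h\rrbracket=\partial_{\mathbf{t}_e}\llbracket w_h\rrbracket$ on $e$, and integrating by parts on the one-dimensional edge gives, for $p_e\in P_1(e)$,
\[
\int_e p_e\,\partial_{\mathbf{t}_e}\llbracket w_h\rrbracket \;=\; \bigl[\,p_e\,\llbracket w_h\rrbracket\,\bigr]_{\partial e}\;-\;\int_e(\partial_{\mathbf{t}_e}p_e)\,\llbracket w_h\rrbracket .
\]
On an interior edge the endpoint term vanishes since $w_h$ is continuous at the vertices; as $\partial_{\mathbf{t}_e}p_e$ realizes every constant while $p_e$ runs over $P_1(e)$, what survives is exactly $\int_e\llbracket w_h\rrbracket=0$ — the remaining interfacial condition in $B^3_h$. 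Thus $\nabla_h w_h\in\uG{}^2_h$ together with $w_h\in A^3_{h0}$ is equivalent to $w_h$ satisfying all interfacial conditions of $B^3_h$. The boundary constraint of $\uG{}^2_{ht}$, namely $\int_e p_e\,\partial_{\mathbf{t}_e}w_h=0$ for $e\in\mathcal{E}_h^b$, is handled identically: it integrates by parts to $\bigl[\,p_e w_h\,\bigr]_{\partial e}-\int_e(\partial_{\mathbf{t}_e}p_e)w_h$, the endpoint term vanishing because $w_h(a)=0$ at boundary vertices, and the remainder is precisely $\int_e w_h=0$, the boundary condition of $B^3_{ht}$. This yields $B^3_{ht}=\{w_h\in A^3_{h0}:\nabla_h w_h\in\uG{}^2_{ht}\}$.

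For the second identity I would observe that $\uG{}^2_{h0}=\uG{}^2_{ht}\cap\{\uv:\int_e p_e\,\uv\cdot\mathbf{n}_e=0,\ \forall\,p_e\in P_1(e),\ \forall\,e\in\mathcal{E}_h^b\}$, since on boundary edges the $\uv^j$-moments and the $(\uv\cdot\mathbf{n}_e,\uv\cdot\mathbf{t}_e)$-moments carry the same information and the $\uv\cdot\mathbf{t}_e$-part is already imposed in $\uG{}^2_{ht}$. Hence the only extra requirement on $\nabla_h w_h$ is $\int_e p_e\,\partial_{\mathbf n}w_h=0$ for all $p_e\in P_1(e)$ on boundary edges, which is verbatim the additional condition cutting $B^3_{h0}$ out of $B^3_{ht}$; combining with the previous paragraph gives $B^3_{h0}=\{w_h\in A^3_{h0}:\nabla_h w_h\in\uG{}^2_{h0}\}$.

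I expect the only delicate point to be the tangential-moment step: one must check that the edgewise integration by parts annihilates the endpoint contributions exactly under the vertex-continuity hypothesis (interior edges) and the vertex-vanishing hypothesis (boundary edges), and that quantifying over all $p_e\in P_1(e)$ collapses to the single scalar condition $\int_e\llbracket w_h\rrbracket=0$ (resp.\ $\int_e w_h=0$) — neither weaker nor stronger. Everything else is routine matching of the two lists of constraints.
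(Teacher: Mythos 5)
Your proposal is correct and follows essentially the same route as the paper: the paper's ``elementary calculus'' step is exactly your edgewise integration by parts showing that, under vertex continuity (resp.\ vertex vanishing on the boundary), the tangential first-moment conditions on $\nabla_h w_h$ collapse to $\int_e\llbracket w_h\rrbracket=0$ (resp.\ $\int_e w_h=0$), while the normal-moment conditions match verbatim. Your write-up simply makes explicit the computation the paper leaves implicit, so no further changes are needed.
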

\begin{proof}
Firstly, by an elementary calculus, the continuity restriction of $B^3_h$ implies that $\int_ep_e\llbracket\partial_\mathbf{t}v_h\rrbracket=0$ for any $p_e\in P_1(e)$, any $e\in\mathcal{E}_h^i$ and any $v_h\in B^3_h$. Also, $\int_ep_e\partial_\mathbf{t}v_h=0$ for any $p_3\in P_1(e)$, any $e\in\mathcal{E}_h^b$ and any $v_h\in B^3_{ht}$.

By the definitions of $B^3_{h0}$ and $A^3_{h0}$, $B^3_{h0}\subset\{w_h\in A^3_{h0}:\nabla w_h\in \uG{}^2_{h0}\}$. On the other hand, given $w_h\in A^3_{h0}$ such that $\nabla_hw_h\in \uG{}^2_{h0}$, then $\int_e\llt\partial_{\bf n_e} w_h\rrt p_e=\int_e\llt\partial_{\bf t_e} w_h\rrt p_e=0$ for any $e\in\mathcal{E}_h$ and $p_e\in P_1(e)$. This implies $w_h\in B^3_{h0}$. Namely $B^3_{h0}=\{w_h\in A^3_{h0}:\nabla_h w_h\in \uG{}^2_{h0}\}$. Similarly can $B^3_{ht}=\{w_h\in A^3_{h0}:\nabla_h w_h\in \uG{}^2_{ht}\}$ be proved, and all the proof is completed. 
\end{proof}

\begin{lemma}\label{lem:hes=lap}
It holds for $w_h,v_h\in B^3_{ht}$ that 
\begin{equation}
(\nabla_h^2w_h,\nabla_h^2v_h)=(\Delta_hw_h,\Delta_hv_h). 
\end{equation}
\end{lemma}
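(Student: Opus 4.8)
The plan is to reduce the claimed identity $(\nabla_h^2 w_h,\nabla_h^2 v_h)=(\Delta_h w_h,\Delta_h v_h)$ to the already-established cell-by-cell identity \eqref{eq:h1=d+c} of Lemma \ref{lem:h1=d+c}, applied to the gradients. The point is the pointwise algebraic decomposition of the Hessian, $\nabla^2 w = \nabla(\nabla w)$, together with the identity $\Delta w = \dv(\nabla w) = \rot(\curl\, w)$; more precisely, for a scalar $w$ one has $\nabla^2 w : \nabla^2 v = \nabla(\nabla w) : \nabla(\nabla v)$ and $\Delta w\,\Delta v = \dv(\nabla w)\,\dv(\nabla v) + \rot(\nabla w)\,\rot(\nabla v)$, since $\rot(\nabla w)=0$ pointwise on each cell (the gradient of a smooth function is curl-free cellwise). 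So the first step is: fix $w_h,v_h\in B^3_{ht}$, set $\uw{}_h:=\nabla_h w_h$ and $\uv{}_h:=\nabla_h v_h$, and observe that
\begin{equation*}
(\nabla_h^2 w_h,\nabla_h^2 v_h)=(\nabla_h\uw{}_h,\nabla_h\uv{}_h),\qquad (\Delta_h w_h,\Delta_h v_h)=(\dv_h\uw{}_h,\dv_h\uv{}_h)+(\rot_h\uw{}_h,\rot_h\uv{}_h),
\end{equation*}
the latter because $\rot_h\uw{}_h=\rot_h\nabla_h w_h=0$ cellwise.

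The second step is to verify the hypotheses of Lemma \ref{lem:h1=d+c}: we need $\uw{}_h,\uv{}_h\in\uG{}^2_{ht}$. But this is exactly the content of Lemma \ref{eq:imp3}: $B^3_{ht}=\{w_h\in A^3_{h0}:\nabla_h w_h\in\uG{}^2_{ht}\}$, so $\nabla_h w_h,\nabla_h v_h\in\uG{}^2_{ht}$ by definition. With that in hand, Lemma \ref{lem:h1=d+c} gives $(\nabla_h\uw{}_h,\nabla_h\uv{}_h)=(\dv_h\uw{}_h,\dv_h\uv{}_h)+(\rot_h\uw{}_h,\rot_h\uv{}_h)$, and combining with the two identities from the first step closes the argument.

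The only genuinely substantive point — and the place to be careful — is the passage from the pointwise Hessian/Laplacian identities to the global broken-norm statement, i.e., that nothing is lost when we replace $w_h$ by its gradient: one must check that $\nabla_h^2 w_h$ really is the cellwise gradient of $\nabla_h w_h$ (true, since differentiation is a cellwise operation and $\Delta_h$, $\nabla_h^2$, $\dv_h$, $\rot_h$ all mean "apply the operator on each $T$ and leave the interfacial jumps alone"), and that the vector field $\nabla_h w_h$ has enough regularity to land in $\uG{}^2_{ht}$ — which is precisely why Lemma \ref{eq:imp3} is invoked rather than just the cellwise polynomial degree count. I do not expect any real obstacle here; the heavy lifting has already been done in Lemmas \ref{lem:h1=d+c} and \ref{eq:imp3}, and this lemma is essentially their composition with the elementary pointwise identity $\nabla^2 = \nabla\circ\nabla$ together with $\Delta = \dv\circ\nabla$ and $\rot\circ\nabla = 0$.
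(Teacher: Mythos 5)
Your proof is correct and follows essentially the same route as the paper: both apply Lemma \ref{lem:h1=d+c} to $\nabla_h w_h,\nabla_h v_h\in\uG{}^2_{ht}$ (guaranteed by Lemma \ref{eq:imp3}) and use $\dv_h\nabla_h=\Delta_h$ together with $\rot_h\nabla_h=0$ cellwise. No gaps.
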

\begin{proof}
By Lemma \ref{lem:h1=d+c}, as $\nabla_hB^3_{ht}\subset\uG{}^2_{ht}$,
$$
(\nabla_h^2w_h,\nabla_h^2v_h)=(\dv_h\nabla_hw_h,\dv_h\nabla_hv_h)+(\rot_h\nabla_hw_h,\rot_h\nabla_hv_h)=(\Delta_h w_h,\Delta_h v_h), \ \forall\,w_h,v_h\in B^3_{ht}.
$$
The proof is completed. 
\end{proof}
\begin{remark}
The lemma reveals that the functions in $B^3_{ht}$ possess some property like the $H^2$ conforming functions. 
\end{remark}

\begin{theorem}\label{thm:exactb3}
Two discretized Stokes complexex are given by
\begin{equation}\label{eq:complexbh3}
\begin{array}{ccccccccc}
0 & \xrightarrow{\bs{inclusion}} & B_{h0}^{3} & \xrightarrow{\bs{\mrm{\nabla}}_h} & \uG{}_{h0}^{2} & \xrightarrow{\mrm{rot}_h} & \mathbb{P}_{h0}^1  & \xrightarrow{\int\cdot} & 0.
\end{array}
\end{equation}
and
\begin{equation}\label{eq:complexbh3navier}
\begin{array}{ccccccccc}
0 & \xrightarrow{\bs{inclusion}} & B_{ht}^{3} & \xrightarrow{\bs{\mrm{\nabla}}_h} & \uG{}_{ht}^{2} & \xrightarrow{\mrm{rot}_h} & \mathbb{P}_{h0}^1  & \xrightarrow{\int\cdot} & 0.
\end{array}
\end{equation}

\end{theorem}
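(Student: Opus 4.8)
The plan is to establish each of the two complexes \eqref{eq:complexbh3} and \eqref{eq:complexbh3navier} by verifying three things: that the composition of successive maps vanishes, that the sequence is exact at each interior node, and finally that surjectivity holds at the right end. By Lemma \ref{eq:imp3} we already know $\nabla_h B^3_{h0}\subset\uG{}^2_{h0}$ and $\nabla_h B^3_{ht}\subset\uG{}^2_{ht}$, so the maps are well defined; and since $\rot\nabla=0$ pointwise, the composition $\rot_h\circ\bs{\nabla}_h$ vanishes. The map $\int\cdot:\mathbb{P}^1_{h0}\to\{0\}$ is trivially onto its target, so what must be shown is exactness at $B^3_{h0}$ (resp.\ $B^3_{ht}$), exactness at $\uG{}^2_{h0}$ (resp.\ $\uG{}^2_{ht}$), and surjectivity of $\rot_h$ onto $\mathbb{P}^1_{h0}$.

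First I would treat the two ends. For exactness at the leftmost nontrivial term, I need $\bs{\nabla}_h$ injective on $B^3_{h0}$ and on $B^3_{ht}$: if $w_h\in A^3_{h0}$ has $\nabla_h w_h=0$ then $w_h$ is piecewise constant, hence (being nodally continuous and vanishing at boundary nodes) identically zero. For surjectivity of $\rot_h$ at the right end, I would invoke Lemma \ref{lem:pGL} (together with the Remark that it holds with $\rot_h$ in place of $\dv_h$): it gives $\rot_h\uG{}^2_{h0}=\mathbb{P}^1_{h0}$ already from the Fortin--Soulie subspace $\undertilde{\mathcal B}{}^2_{h0}\subset\uG{}^2_{h0}$, and since $\uG{}^2_{h0}\subset\uG{}^2_{ht}$ the same holds for the Navier complex. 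So only the middle exactness remains genuinely to be argued.

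The core of the proof is the middle exactness: every $\uv_h\in\uG{}^2_{h0}$ with $\rot_h\uv_h=0$ must be $\nabla_h w_h$ for some $w_h\in B^3_{h0}$, and likewise for the $t$-versions. The clean way is a dimension count, mirroring the proof of Lemma \ref{lem:cmc}. One has the inclusions $\nabla_h B^3_{h0}\subset\ker(\rot_h|_{\uG{}^2_{h0}})$ and, from the two facts just established, $\dim\nabla_h B^3_{h0}=\dim B^3_{h0}$ and $\dim\big(\uG{}^2_{h0}/\ker\rot_h\big)=\dim\mathbb{P}^1_{h0}$; hence it suffices to verify the numerical identity
\begin{equation*}
\dim B^3_{h0}+\dim\mathbb{P}^1_{h0}=\dim\uG{}^2_{h0}.
\end{equation*}
Here $\dim\mathbb{P}^1_{h0}=3\#\mathcal T_h-1$ and, since $\uG{}^2_{h0}=\uS{}^2_{h0}\oplus\undertilde{\mathcal B}{}^2_{h0}$ with $\dim\undertilde{\mathcal B}{}^2_{h0}=2\#\mathcal T_h$ and $\dim\uS{}^2_{h0}=2\#\mathcal X_h^i$, we get $\dim\uG{}^2_{h0}=2\#\mathcal X_h^i+2\#\mathcal T_h$. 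The remaining ingredient, and what I expect to be the main obstacle, is computing $\dim B^3_{h0}$: because $B^3_h$ is not a Ciarlet finite element (the $12$ interface constraints per cell are, as the Remark stresses, overdetermined relative to the $10$-dimensional local $P_3$), one cannot simply sum local degrees of freedom. I would get around this by using Lemma \ref{eq:imp3} to identify $B^3_{h0}$ with $\{w_h\in A^3_{h0}:\nabla_h w_h\in\uG{}^2_{h0}\}$ and counting within $A^3_{h0}$: starting from $\dim A^3_{h0}=\#\mathcal X_h^i+7\#\mathcal T_h$ (as in Lemma \ref{lem:cmc}) and subtracting the independent constraints that cut $\nabla_h A^3_{h0}$ down into $\uG{}^2_{h0}$ --- equivalently, the edge conditions $\int_e p_e\llbracket\partial_{\bf n}w_h\rrbracket=0$ for $p_e\in P_1(e)$ on interior edges plus the boundary conditions --- which should amount to $2\#\mathcal E_h^i$ conditions minus the redundancies forced by $\rot_h\nabla_h w_h=0$ being automatic. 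Carefully bookkeeping these redundancies, combined with the Euler relation $\#\mathcal X_h-\#\mathcal E_h+\#\mathcal T_h=1$ (and the boundary relations $\#\mathcal X_h^b=\#\mathcal E_h^b$), should close the identity. The Navier case \eqref{eq:complexbh3navier} follows by the identical argument with $\uS{}^2_{ht}$ (dimension $2\#\mathcal X_h^i+2\#\mathcal X_h^b$, reflecting the tangential-only boundary constraint) in place of $\uS{}^2_{h0}$, $B^3_{ht}$ in place of $B^3_{h0}$, and the rest of the count adjusted by the corresponding boundary terms; the exactness at the middle is again reduced to a dimension count via Lemma \ref{eq:imp3} and the Fortin--Soulie surjectivity.
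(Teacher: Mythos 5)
Your skeleton is fine and matches the paper's: well-definedness of $\bs{\nabla}_h$ via Lemma \ref{eq:imp3}, vanishing of $\mrm{rot}_h\circ\bs{\nabla}_h$, injectivity of $\bs{\nabla}_h$ on $B^3_{h0}$ and $B^3_{ht}$, and surjectivity of $\mrm{rot}_h$ from Lemma \ref{lem:pGL} (with the remark replacing $\dv_h$ by $\rot_h$). But the middle exactness --- the only substantive claim --- is not actually proved. Your plan reduces it to the identity $\dim B^3_{h0}+\dim\mathbb{P}^1_{h0}=\dim\uG{}^2_{h0}$, and the one quantity you cannot obtain by local counting is precisely $\dim B^3_{h0}$; you acknowledge this and then leave it at ``careful bookkeeping \ldots should close the identity,'' which is a placeholder, not an argument. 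The difficulty is real, not cosmetic: within $A^3_{h0}$ the conditions $\nabla_h w_h\in\uG{}^2_{h0}$ amount (after integrating the tangential moments by parts) to three effective constraints per edge, and treating them as independent gives $\dim A^3_{h0}-3\#\mathcal{E}_h=\#\mathcal{X}_h^i+7\#\mathcal{T}_h-3\#\mathcal{E}_h$, which by Euler's formula falls short of the required $\dim\uG{}^2_{h0}-\dim\mathbb{P}^1_{h0}$ by exactly $2$ (check the criss-cross square: the naive count gives $5$ while exactness forces $7$). So proving exactness is equivalent to exhibiting two global linear dependencies among the interface/boundary constraints, and you give no mechanism for producing them. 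In addition, your stated dimensions are off: $\dim\uS{}^2_{h0}=2(\#\mathcal{X}_h^i+\#\mathcal{E}_h^i)$, not $2\#\mathcal{X}_h^i$ (the edge-midpoint degrees of freedom are missing), so even the target identity as written is wrong; and the aside that surjectivity of $\rot_h$ follows ``already from'' $\undertilde{\mathcal{B}}{}^2_{h0}$ cannot be right for dimensional reasons ($2\#\mathcal{T}_h<3\#\mathcal{T}_h-1$), though the invocation of Lemma \ref{lem:pGL} itself is correct.

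The paper's proof sidesteps the counting of $\dim B^3_{h0}$ entirely, and you already cite the two lemmas that make this possible. Given $\utau{}_h\in\uG{}^2_{h0}$ with $\rot_h\utau{}_h=0$, note $\uG{}^2_{h0}\subset\uG{}^{2,\rm r}_{h0}$, and invoke the \emph{exactness} of the auxiliary complex in Lemma \ref{lem:cmc} (whose spaces $A^3_{h0}$ and $\uG{}^{2,\rm r}_{h0}$ have straightforwardly countable dimensions, which is where the counting argument legitimately lives) to obtain $w_h\in A^3_{h0}$ with $\nabla_h w_h=\utau{}_h$; then Lemma \ref{eq:imp3} says that $\nabla_h w_h\in\uG{}^2_{h0}$ already forces $w_h\in B^3_{h0}$, and likewise for the Navier case with $\uG{}^2_{ht}$ and $B^3_{ht}$. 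You use Lemma \ref{lem:cmc} only as a source of dimension formulas rather than for its exactness statement; using the latter is the missing idea, and without it (or a genuine identification of the two constraint redundancies) your argument does not close.
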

\begin{proof}
By Lemmas \ref{lem:pGL}, given $p_h\in\mathbb{P}^1_{h0}$, there exists $\usigma{}_h\in\uG{}^2_{h0}$, such that $\rot_h\usigma{}_h=p_h$. Further, given $\utau{}_h\in\uG{}^2_{h0}$, such that $\rot_h\utau{}_h=0$, by Lemma \ref{lem:cmc}, there exists $w_h\in A^3_{h0}$, such that $\utau{}_h=\nabla_h w_h$. Further, by Lemma \ref{eq:imp3}, $w_h\in B^3_{h0}$. Therefore, \eqref{eq:complexbh3} is proved. Similarly can \eqref{eq:complexbh3navier} be proved.  
\end{proof}
\begin{remark}
A key feature for the proof of Theorem \ref{thm:exactb3} is to construct a bigger finite element complex to cover, e.g., \eqref{eq:complexbh3}; this is accomplished by Lemma \ref{lem:cmc}, where a finite element complex is constructed where the same piecewise polynomial space with lower regularity is used corresponding to \eqref{eq:complexbh3}. A dual way can be to use bigger piecewise polynomial space with the same regularity. A different proof of \eqref{eq:complexbh3} can be found along this line in \cite{Zhang.S2018}.
\end{remark}

\section{Optimal nonconforming finite element schemes for biharmonic equation}
\label{sec:cubic}

We consider the biharmonic equation with $f\in L^2(\Omega)$:
\begin{equation}\label{eq:modeld}
\mbox{Dirichlet\ type}\quad
\left\{
\begin{array}{rl}
\Delta^2u=f&\mbox{in}\,\Omega,
\\
u=\partial_{\mathbf n}u=0,&\mbox{on}\,\partial\Omega,
\end{array}
\right.
\end{equation}
and
\begin{equation}\label{eq:modeln}
\mbox{Navier\ type}\quad
\left\{
\begin{array}{rl}
\Delta^2z=f&\mbox{in}\,\Omega;
\\
z=\Delta z=0,&\mbox{on}\,\partial\Omega.
\end{array}
\right.
\end{equation}
The variational problems are respectively 
\begin{itemize}
\item to find $u\in H^2_0(\Omega)$ such that 
\begin{equation}\label{eq:bhvp}
(\nabla^2u,\nabla^2v)=(f,v),\quad\forall\,v\in H^2_0(\Omega),
\end{equation}
\item to find $z\in H^2(\Omega)\cap H^1_0(\Omega)$, such that 
\begin{equation}\label{eq:bhvpn}
(\nabla^2z,\nabla^2v)=(f,v),\quad\forall\,v\in H^2\cap H^1_0(\Omega).
\end{equation}
\end{itemize}
In this section, we consider the nonconforming finite element discretization for them:
\begin{itemize}
\item
find $u_h\in B^3_{h0}$ such that 
\begin{equation}\label{eq:bhvpp3}
a_h(u_h,v_h):=(\nabla_h^2u_h,\nabla^2_hv_h)=(f,v_h),\quad\forall\,v_h\in B^3_{h0};
\end{equation}
\item find $z_h\in B^3_{ht}$ such that 
\begin{equation}\label{eq:bhvpp3n}
a_h(z_h,v_h)=(f,v_h),\quad\forall\,v_h\in B^3_{ht}.
\end{equation}
\end{itemize}
By the weak continuity of $B^3_{ht}$, $|\cdot|_{2,h}$ (namely, $\|\nabla_h^2\cdot\|_{0,\Omega}$) is a norm on $B^3_{ht}$, and \eqref{eq:bhvpp3} and \eqref{eq:bhvpp3n} are well-posed.

The main result of this section is contained in the theorem below.
\begin{theorem}\label{thm:ratep3}
Let $u$, $u_h$, $z$ and $z_h$ be solutions of \eqref{eq:bhvp} and \eqref{eq:bhvpp3}, \eqref{eq:bhvpn}, and \eqref{eq:bhvpp3n}, respectively. Then, with a generic constant $C$ depending on $\Omega$ and the regularity of the grid only, it holds for $u,z\in H^m(\Omega)$, $m=3,4$, that
\begin{equation}\label{eq:h2error3}
\|\nabla_h^2(u-u_h)\|_{0,\Omega}\leqslant C(h^{m-2}|u|_{m,\Omega}+h^2\|f\|_{0,\Omega}).
\end{equation}
and
\begin{equation}\label{eq:h2error3n}
\|\nabla_h^2(z-z_h)\|_{0,\Omega}\leqslant C(h^{m-2}|z|_{m,\Omega}+h^2\|f\|_{0,\Omega}).
\end{equation}
Moreover, when $\Omega$ is convex, 
\begin{equation}\label{eq:h1error3}
\|\nabla_h(u-u_h)\|_{0,\Omega}\leqslant C(h^{m-1}|u|_{m,\Omega}+h^3\|f\|_{0,\Omega}),
\end{equation}
and
\begin{equation}\label{eq:h1error3n}
\|\nabla_h(u-u_h)\|_{0,\Omega}\leqslant C(h^{m-2+\delta}|u|_{m,\Omega}+h^3\|f\|_{0,\Omega}),\ \ 1/2<\delta\leqslant 1.
\end{equation}
When $\Omega$ is specifically a rectangle, $\delta =1$.
\end{theorem}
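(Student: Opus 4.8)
The plan is to derive \eqref{eq:h2error3}--\eqref{eq:h2error3n} from the second Strang lemma, with the approximation part handled through the discretized Stokes complexes of Theorem~\ref{thm:exactb3} and the consistency part by a cell-by-cell integration by parts, and then to obtain \eqref{eq:h1error3}--\eqref{eq:h1error3n} by an Aubin--Nitsche duality argument on the convex domain.

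\emph{Strang reduction and the approximation term.} Since $|\cdot|_{2,h}=\|\nabla_h^2\cdot\|_{0,\Omega}$ is a norm on $B^3_{h0}$ (and on $B^3_{ht}$), the second Strang lemma gives
\[
\|\nabla_h^2(u-u_h)\|_{0,\Omega}\leqslant C\Big(\inf_{v_h\in B^3_{h0}}\|\nabla_h^2(u-v_h)\|_{0,\Omega}+\sup_{0\neq w_h\in B^3_{h0}}\frac{|a_h(u,w_h)-(f,w_h)|}{\|\nabla_h^2w_h\|_{0,\Omega}}\Big),
\]
and the analogue with $(z,z_h,B^3_{ht})$. For the approximation term, let $u\in H^m(\Omega)\cap H^2_0(\Omega)$, $m=3,4$, so that $\nabla u\in\undertilde{H}{}^1_0(\Omega)$ with $\rot\nabla u=0$. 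Let $\undertilde{I}_h\nabla u\in\uS{}^2_{h0}\subset\uG{}^2_{h0}$ be the componentwise quadratic Lagrange interpolant (well defined because $H^{m-1}\hookrightarrow C^0$ in the plane). Then $\rot_h(\undertilde{I}_h\nabla u)\in\mathbb{P}^1_{h0}$, since $\int_\Omega\rot_h(\undertilde{I}_h\nabla u)=\int_{\partial\Omega}\undertilde{I}_h\nabla u\cdot\mathbf{t}=0$, and
\[
\|\rot_h(\undertilde{I}_h\nabla u)\|_{0,\Omega}=\|\rot_h(\undertilde{I}_h\nabla u-\nabla u)\|_{0,\Omega}\leqslant Ch^{m-2}|u|_{m,\Omega}.
\]
By Lemma~\ref{lem:pGL} in its $\rot$ form there is $\udelta{}_h\in\uG{}^2_{h0}$ with $\rot_h\udelta{}_h=-\rot_h(\undertilde{I}_h\nabla u)$ and $\|\nabla_h\udelta{}_h\|_{0,\Omega}\leqslant Ch^{m-2}|u|_{m,\Omega}$; then $\uv{}_h:=\undertilde{I}_h\nabla u+\udelta{}_h\in\uG{}^2_{h0}$ is $\rot_h$-free, so by the exactness of \eqref{eq:complexbh3} it equals $\nabla_hv_h$ for some $v_h\in B^3_{h0}$, whence $\nabla_h^2v_h=\nabla_h\uv{}_h$ and
\[
\|\nabla_h^2(u-v_h)\|_{0,\Omega}=\|\nabla_h(\nabla u-\uv{}_h)\|_{0,\Omega}\leqslant\|\nabla_h(\nabla u-\undertilde{I}_h\nabla u)\|_{0,\Omega}+\|\nabla_h\udelta{}_h\|_{0,\Omega}\leqslant Ch^{m-2}|u|_{m,\Omega}.
\]
This realizes the idea of estimating the approximation error of $B^3_{h0}$ through the discretization error of an auxiliary Stokes pair; the Navier case is identical using $\uS{}^2_{ht}$, the complex \eqref{eq:complexbh3navier}, and $\nabla z\cdot\mathbf{t}=0$ on $\partial\Omega$.

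\emph{Consistency term.} For $w_h\in B^3_{h0}$, integrating by parts cell-by-cell and using $\Delta^2u=f$ expresses $a_h(u,w_h)-(f,w_h)$ as a sum over $e\in\mathcal{E}_h$ of interfacial integrals pairing traces of second and third derivatives of $u$ against $\llt w_h\rrt$, $\llt\partial_{\mathbf{n}}w_h\rrt$ and $\llt\partial_{\mathbf{t}}w_h\rrt$. A convenient device is a bounded smoothing operator $R_h\colon B^3_{h0}\to H^2_0(\Omega)$ with $\|w_h-R_hw_h\|_{0,T}\leqslant Ch_T^2\|\nabla_h^2w_h\|_{0,T}$ and $\|\nabla_h(w_h-R_hw_h)\|_{0,T}\leqslant Ch_T\|\nabla_h^2w_h\|_{0,T}$: since $u$ solves \eqref{eq:bhvp}, $a_h(u,w_h)-(f,w_h)=a_h(u,w_h-R_hw_h)+(f,R_hw_h-w_h)$, the last term being $\leqslant Ch^2\|f\|_{0,\Omega}\|\nabla_h^2w_h\|_{0,\Omega}$, while $a_h(u,w_h-R_hw_h)$ again reduces to interfacial terms in $\llt w_h\rrt$ (which has zero mean and zero vertex values) and $\llt\nabla w_h\rrt$ (whose $P_1(e)$-moments vanish by the weak continuity of $B^3_h$). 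Subtracting from each smooth coefficient its edgewise best $P_1(e)$ (resp.\ $P_0(e)$) approximation and invoking trace, scaling and inverse inequalities together with $H^{m-2}$-control of $\nabla^2u$ yields $|a_h(u,w_h)-(f,w_h)|\leqslant C(h^{m-2}|u|_{m,\Omega}+h^2\|f\|_{0,\Omega})\|\nabla_h^2w_h\|_{0,\Omega}$. The Navier case is the same, the boundary conditions of $B^3_{ht}$ disposing of the boundary edges. Together with the Strang reduction this proves \eqref{eq:h2error3} and \eqref{eq:h2error3n}. (Lemma~\ref{lem:hes=lap} is not needed here, though it shows $a_h$ may be replaced by $(\Delta_h\cdot,\Delta_h\cdot)$.)

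\emph{Duality for the $H^1$ estimates.} Assume $\Omega$ convex and fix $\undertilde{\phi}\in(L^2(\Omega))^2$ with $\|\undertilde{\phi}\|_{0,\Omega}=1$. Let $w\in H^2_0(\Omega)$ solve $(\nabla^2w,\nabla^2v)=-(\nabla v,\undertilde{\phi})$ for all $v\in H^2_0(\Omega)$, i.e.\ $\Delta^2w=\dv\undertilde{\phi}$ weakly; on a convex polygon one has the elliptic regularity $w\in H^{2+\delta}(\Omega)$ with $\|w\|_{2+\delta,\Omega}\leqslant C$, where $\delta=1$ for the Dirichlet biharmonic problem and when $\Omega$ is a rectangle, and $\delta\in(1/2,1]$ for the Navier problem on a general convex polygon. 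Writing $e_h=u-u_h$, testing with the approximant $w_h\in B^3_{h0}$ of $w$ constructed above, and expanding, $(\nabla_he_h,\undertilde{\phi})$ becomes $-(\nabla_h^2e_h,\nabla_h^2(w-w_h))$ plus the primal consistency error of $u$ tested against $w_h$ and the dual consistency error of $w$ tested against $e_h$ (and boundary remainders). The first term is bounded, by \eqref{eq:h2error3} and the approximation estimate applied to $w$, by $Ch^{m-2}\cdot h^{\delta}\,|u|_{m,\Omega}\,|w|_{2+\delta,\Omega}$, and the two consistency errors similarly by the consistency estimate applied to $u$ and to $w$. Taking the supremum over $\undertilde{\phi}$ gives \eqref{eq:h1error3} (with $\delta=1$) and \eqref{eq:h1error3n} (with $\delta\in(1/2,1]$, and $\delta=1$ for a rectangle); the symbols in \eqref{eq:h1error3n} are to be read as $z-z_h$. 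The main obstacle is the consistency analysis: the integration by parts must be organised so that the residual is expressed \emph{exactly} through the jump quantities that the overdetermined weak continuity of $B^3_h$ annihilates, so that the edgewise polynomial corrections buy the full $h^{m-2}$ (and the separate $h^2\|f\|_{0,\Omega}$) rather than merely $h$ — securing this optimal $\mathcal{O}(h^2)$ is exactly what the design of $B^3_h$ is for; a secondary point is the regularity bookkeeping for the dual problem, which is what forces the loss from $\delta=1$ to $\delta\in(1/2,1]$ for the Navier problem on a general convex domain.
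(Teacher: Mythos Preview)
Your outline for the $H^2$ estimates follows the same Strang--lemma skeleton as the paper, and your approximation argument---interpolating $\nabla u$ by the continuous $P_2$ Lagrange interpolant and then correcting the residual $\rot_h$ via the inf--sup condition---is a valid and in fact more direct route than the paper's, which instead defines the interpolant $\mathbb{I}^B_{h0}u$ as the $B^3_{h0}$-preimage of the discrete Stokes solution \eqref{eq:auxstokes}. The paper's choice pays off later: the Stokes discretization error, analysed with its own duality, also yields the $H^1$ approximation bound $\|\nabla_h(u-\mathbb{I}^B_{h0}u)\|_{0,\Omega}\leqslant Ch^{m-1}|u|_{m,\Omega}$ on convex $\Omega$ (Lemma~\ref{lem:approxb3}, case $k=1$), which the paper needs in its $H^1$ proof; your construction gives only the $H^2$ rate.

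For the consistency term the paper does \emph{not} invoke an enriching operator into $H^2_0(\Omega)$. It splits the residual as $\mathcal{R}_h=\mathcal{R}^1_h-\mathcal{R}^2_h$ (see \eqref{eq:res1def}--\eqref{eq:resdef}) and handles $\mathcal{R}^2_h(\varphi,w_h)=(\nabla\Delta\varphi,\nabla_hw_h)+(\Delta^2\varphi,w_h)$ by subtracting the \emph{conforming $P_2$} interpolant $\Pi_h^2w_h\in\mathcal{L}^2_{h0}\subset H^1_0(\Omega)$, built from vertex values and edge means, both of which are single-valued on $B^3_{h0}$. This is elementary to construct and delivers the cellwise orthogonality \eqref{eq:orthpi2}, from which the $h^{m-2}|u|_m$ and the separate $h^2\|f\|$ follow cleanly. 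Your smoothing $R_h:B^3_{h0}\to H^2_0(\Omega)$ would work in principle, but its existence with the stated local estimates is not immediate for a space that is not given by a Ciarlet triple; you should at least indicate how it is built (e.g.\ by averaging into an HCT or Argyris space using the continuous vertex values and the available edge moments of $B^3_{h0}$).

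The genuine gap is in your $H^1$ duality. In your decomposition, ``the primal consistency error of $u$ tested against $w_h$'' is $a_h(u,w_h)-(f,w_h)$, and your own consistency estimate bounds this only by $C(h^{m-2}|u|_{m}+h^2\|f\|)\,|w_h|_{2,h}$ with $|w_h|_{2,h}=O(1)$---one power of $h$ short. The missing observation is that this residual \emph{vanishes} when the test function is the smooth dual solution $w\in H^2_0(\Omega)$, so by linearity it equals the residual tested against $w_h-w$, whose broken $H^2$ seminorm is $O(h^\delta|w|_{2+\delta})$; only then do you recover $h^{m-1}$. The paper organises the duality differently (after \cite{Shi.Z1990}): it sets $u_h^\Pi=\mathbb{I}^B_{h0}u$, passes to the \emph{conforming} quantity $\Pi_h^1(u_h^\Pi-u_h)\in H^1_0(\Omega)$, poses the dual biharmonic problem with right-hand side $(\nabla\Pi_h^1(u_h^\Pi-u_h),\nabla\cdot)$, and then uses exactly this vanishing trick in the form $\mathcal{R}_h(u,\varphi_h^\Pi)=-\mathcal{R}_h(u,\varphi-\varphi_h^\Pi)$. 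The final bound on $\|\nabla_h(u-u_h)\|$ also requires the $H^1$ approximation property of $\mathbb{I}^B_{h0}$ noted above. Your route can be repaired along these lines, but as written the primal-consistency step does not close.
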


We postpone the proof of Theorem \ref{thm:ratep3} after some technical lemmas. 

\subsection{Approximation property of $B^3_{h0}$}

First of all, we define an interpolator to $B^3_{h0}$. Given $w\in H^3(\Omega)\cap H^2_0(\Omega)$, set $\uphi:=\nabla w$, then $\uphi\in \uH^2(\Omega)\cap \uH{}^1_0(\Omega)$ and $\rot\,\uphi=0$. Indeed, $(\uphi,p\equiv 0)$ solves the incompressible Stokes equation:
\begin{equation}
\left\{
\begin{array}{lll}
(\nabla\uphi,\nabla\upsi)+(\rot\upsi,p)&=(-\Delta \uphi,\upsi),&\forall\,\upsi\in\uH{}^1_0(\Omega);
\\
(\rot\uphi,q)&=0,&\forall\,q\in L^2_0(\Omega).
\end{array}
\right.
\end{equation}
Now, choose $(\uphi{}_h,p_h)\in \uG{}^2_{h0}\times \mathbb{P}_{h0}^1$ such that 
\begin{equation}\label{eq:auxstokes}
\left\{
\begin{array}{lll}
(\nabla_h\uphi{}_h,\nabla_h\upsi{}_h)+(\rot_h\upsi{}_h,p_h)&=(-\Delta \uphi,\upsi{}_h),&\forall\,\upsi{}_h\in\uG{}^2_{h0};
\\
(\rot_h\uphi{}_h,q_h)&=0, &\forall\,q_h\in \mathbb{P}_{h0}^1.
\end{array}
\right.
\end{equation}
Then, by Theorem \ref{thm:exactb3}, there exists a unique $w_h\in B^3_{h0}$ such that $\nabla_hw_h=\uphi{}_h$. This way, we define an interpolation operator $\mathbb{I}_{h0}^B:H^3(\Omega)\cap H^2_0(\Omega)\to B^3_{h0}$ by
\begin{equation}
\mathbb{I}_{h0}^Bw:=w_h.
\end{equation}


\begin{lemma}\label{lem:approxb3}
There exists a constant $C$ such that for any $w\in H^2_0(\Omega)\cap H^m(\Omega)$, $m=3,4$, it holds for $k=2$ that
\begin{equation}\label{eq:interrTnorm}
|w-\mathbb{I}^B_{h0}w|_{k,\Omega}^2\leqslant C\sum_{T\in\mathcal{T}_h}h_T^{2m-2k}|w|_{m,T}^2.
\end{equation}
If $\Omega$ is convex, then \eqref{eq:interrTnorm} holds for $k=1,2$.
\end{lemma}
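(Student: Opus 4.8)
The idea is to exploit the definition of $\mathbb{I}^B_{h0}$ through the auxiliary discretized Stokes problem \eqref{eq:auxstokes}, so that estimating $|w-\mathbb{I}^B_{h0}w|_{k}$ reduces to a standard finite element error analysis for the Stokes discretization, combined with the identity $\nabla_h(\mathbb{I}^B_{h0}w)=\uphi{}_h$ and $\nabla w = \uphi$. First I would record that $|w-\mathbb{I}^B_{h0}w|_{2,\Omega} = \|\nabla_h w - \nabla_h w_h\|_{?}$... more precisely $\nabla_h^2(w-w_h)=\nabla_h(\uphi-\uphi{}_h)$, so $|w-\mathbb{I}^B_{h0}w|_{2,\Omega}=\|\nabla_h(\uphi-\uphi{}_h)\|_{0,\Omega}$, which is exactly the energy-norm error of the velocity in the mixed discretization \eqref{eq:auxstokes}. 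For the $k=1$ case, $|w-\mathbb{I}^B_{h0}w|_{1,\Omega}=\|\uphi-\uphi{}_h\|_{0,\Omega}$, the $L^2$ velocity error.

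The next step is to run the classical Brezzi-type analysis for \eqref{eq:auxstokes}. Since $\uG{}^2_{h0}$ and $\mathbb{P}^1_{h0}$ satisfy the inf-sup condition of Lemma \ref{lem:pGL}, the pair is stable; the consistency error is controlled because $\uG{}^2_{h0}$ consists of first-order moment-continuous (Fortin--Soulie) functions, so the jump terms arising when integrating by parts against $-\Delta\uphi$ and against the (constant-in-this-case-zero) pressure $p\equiv 0$ vanish up to the right order — this is the usual patch-test argument for the Fortin--Soulie element, giving a consistency bound of order $h^{m-2}|\uphi|_{m-1,\Omega}=h^{m-2}|w|_{m,\Omega}$. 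Combined with the approximation property of $\uG{}^2_{h0}$ for $\uH^{m-1}$ vector fields (again standard for the quadratic moment-continuous space) and of $\mathbb{P}^1_{h0}$ for the pressure, the second Strang lemma yields
\begin{equation*}
\|\nabla_h(\uphi-\uphi{}_h)\|_{0,\Omega}+\|p-p_h\|_{0,\Omega}\leqslant C\,h^{m-2}|w|_{m,\Omega},
\end{equation*}
and since $p\equiv 0$ here, the pressure term is just $\|p_h\|_{0,\Omega}$. Summing the local approximation contributions cell by cell gives the stated form $\sum_T h_T^{2m-4}|w|_{m,T}^2$ for $k=2$. Note no convexity is needed for this half, consistent with the statement.

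For the $k=1$ estimate I would use a duality (Aubin--Nitsche) argument: introduce the auxiliary Stokes problem with data $\uphi-\uphi{}_h$, whose solution enjoys $\uH^2\times H^1$ regularity precisely when $\Omega$ is convex, then pair it against the error equation, using Galerkin orthogonality of \eqref{eq:auxstokes} and the consistency estimates for the nonconforming terms once more. This gains one power of $h$, producing $\|\uphi-\uphi{}_h\|_{0,\Omega}\leqslant C h^{m-1}|w|_{m,\Omega}$, i.e. \eqref{eq:interrTnorm} for $k=1$ under convexity. The main obstacle I anticipate is the careful bookkeeping of the \emph{nonconforming consistency terms} — both in the primal error bound and in the duality argument — since $\uG{}^2_{h0}\not\subset\uH^1_0(\Omega)$, one must verify that the edge jump integrals against the relevant test data genuinely vanish to the order claimed; this rests on the first-order moment continuity built into $\uG{}^2_{h0}$ and on $\partial_{\mathbf n}\uphi$, $\dv\uphi$, $\rot\uphi$ being smooth, together with the $P_1$-exactness of the jump conditions, exactly the mechanism already used in the proof of Lemma \ref{lem:h1=d+c}. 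A secondary technical point is ensuring that the duality solution's pressure component is handled correctly (it does not vanish in general), but this is routine once the regularity is in hand.
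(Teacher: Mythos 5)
Your proposal is correct and follows essentially the same route as the paper: the paper's proof simply observes that the interpolation error of $\mathbb{I}^B_{h0}$ coincides with the discretization error of the auxiliary Stokes problem \eqref{eq:auxstokes} and invokes standard techniques, which is exactly the identification $|w-\mathbb{I}^B_{h0}w|_{2,\Omega}=\|\nabla_h(\uphi-\uphi{}_h)\|_{0,\Omega}$ (and $|w-\mathbb{I}^B_{h0}w|_{1,\Omega}=\|\uphi-\uphi{}_h\|_{0,\Omega}$) that you start from. Your elaboration via the inf-sup stability of Lemma \ref{lem:pGL}, the Strang-type consistency bound using the moment continuity of $\uG{}^2_{h0}$, and the Aubin--Nitsche duality under convexity for $k=1$ is precisely the ``standard technique'' the paper leaves implicit.
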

\begin{proof}
By definition, the interpolation error of $\mathbb{I}^B_{h0}$ is the discretization error of \eqref{eq:auxstokes}, and \eqref{eq:interrTnorm} can be obtained by standard technique (with $\Omega$ either convex or nonconvex). 
\end{proof}

\subsection{Approximation of $B^3_{ht}$}

Again, we firstly define an interpolator to $B^3_{ht}$. Given $w\in H^3(\Omega)\cap H^1_0(\Omega)$ such that $\Delta w|_\Gamma=0$, set $\uphi:=\nabla w$, then $\uphi\in \uH^2(\Omega)\cap H_0(\rot,\Omega)$, $\rot\,\uphi=0$ and $(\dv\uphi)|_\Gamma=0$. Indeed, $(\uphi,p\equiv 0)$ solves the incompressible Stokes equation:
\begin{equation}\label{eq:stokest}
\left\{
\begin{array}{lll}
(\nabla\uphi,\nabla\upsi)+(\rot\upsi,p)&=(-\Delta \uphi,\upsi),&\forall\,\upsi\in\uH{}^1(\Omega)\cap H_0(\rot,\Omega);
\\
(\rot\uphi,q)&=0,&\forall\,q\in L^2_0(\Omega).
\end{array}
\right.
\end{equation}
Now, choose $(\uphi{}_h,p_h)\in \uG{}^2_{ht}\times \mathbb{P}_{h0}^1$ such that 
\begin{equation}\label{eq:auxstokesn}
\left\{
\begin{array}{lll}
(\nabla_h\uphi{}_h,\nabla_h\upsi{}_h)+(\rot_h\upsi{}_h,p_h)&=(-\Delta \uphi,\upsi{}_h),&\forall\,\upsi{}_h\in\uG{}^2_{ht};
\\
(\rot_h\uphi{}_h,q_h)&=0, &\forall\,q_h\in \mathbb{P}_{h0}^1.
\end{array}
\right.
\end{equation}
Then, by Theorem \ref{thm:exactb3}, there exists a unique $w_h\in B^3_{ht}$ such that $\nabla_hw_h=\uphi{}_h$. This way, we define an interpolation operator $\mathbb{I}_{ht}^B:H^3(\Omega)\cap H^1_0(\Omega)\to B^3_{ht}$ by
\begin{equation}
\mathbb{I}_{ht}^Bw:=w_h.
\end{equation}


\begin{lemma}\label{lem:approxb3n}
There exists a constant $C$ such that for any $w\in H^1_0(\Omega)\cap H^m(\Omega)$ such that $\Delta w|_\Gamma=0$, $m=3,4$, it holds that
\begin{equation}\label{eq:interrTnormn}
|w-\mathbb{I}^B_{ht}w|_{2,\Omega}^2\leqslant C\sum_{T\in\mathcal{T}_h}h_T^{2m-4}|w|_{m,T}^2.
\end{equation}
If $\Omega$ is convex, then
\begin{equation}\label{eq:interrTnormnh1}
|w-\mathbb{I}^B_{ht}w|_{1,\Omega}^2\leqslant C\sum_{T\in\mathcal{T}_h}h_T^{2m-4+\kappa}|w|_{m,T}^2,\ \mbox{with}\ 1<\kappa\leqslant2.
\end{equation}
If specifically $\Omega$ is rectangle, $\kappa=2$.
\end{lemma}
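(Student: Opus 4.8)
The plan is to argue exactly as in Lemma~\ref{lem:approxb3}: the operator $\mathbb{I}^B_{ht}$ was defined so that $\nabla_h(\mathbb{I}^B_{ht}w)=\uphi{}_h$, where $(\uphi{}_h,p_h)\in\uG{}^2_{ht}\times\mathbb{P}^1_{h0}$ solves the nonconforming mixed discretization \eqref{eq:auxstokesn} of the Stokes problem \eqref{eq:stokest} whose exact solution is $(\uphi,0)=(\nabla w,0)$. Hence $|w-\mathbb{I}^B_{ht}w|_{2,\Omega}=\|\nabla_h^2w-\nabla_h^2\mathbb{I}^B_{ht}w\|_{0,\Omega}=\|\nabla_h(\uphi-\uphi{}_h)\|_{0,\Omega}$, so \eqref{eq:interrTnormn} is precisely the energy-norm error estimate for the velocity in the nonconforming discretization \eqref{eq:auxstokesn}. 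First I would record the discrete stability of \eqref{eq:auxstokesn}: coercivity of $(\nabla_h\cdot,\nabla_h\cdot)$ on the discretely divergence-free (here rot-free) subspace of $\uG{}^2_{ht}$ follows from a discrete Poincar\'e/Korn-type inequality on $\uG{}^2_{ht}$, and the inf--sup condition for $(\rot_h\cdot,\cdot)$ between $\uG{}^2_{ht}$ and $\mathbb{P}^1_{h0}$ is exactly the rot-analogue of Lemma~\ref{lem:pGL} noted in the Remark after it. These give well-posedness of \eqref{eq:auxstokesn} and a uniform bound on $(\uphi{}_h,p_h)$.

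Next I would run the standard second Strang lemma for the nonconforming mixed method: the error $\|\nabla_h(\uphi-\uphi{}_h)\|_{0,\Omega}+\|p-p_h\|_{0,\Omega}$ is bounded by a best-approximation term $\inf_{\uv{}_h\in\uG{}^2_{ht}}\|\nabla_h(\uphi-\uv{}_h)\|_{0,\Omega}+\inf_{q_h}\|p-q_h\|_{0,\Omega}$ plus consistency (nonconformity) terms of the form $\sup_{\upsi{}_h}\frac{1}{\|\nabla_h\upsi{}_h\|}\big|(\nabla_h\uphi,\nabla_h\upsi{}_h)-(-\Delta\uphi,\upsi{}_h)\big|$ and the analogous term involving $p$ and $\rot_h\upsi{}_h$. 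Since $p\equiv 0$ the pressure consistency term vanishes; the velocity consistency term is handled cellwise by integration by parts, producing edge integrals $\sum_e\int_e\partial_{\bf n}\uphi\cdot\llt\upsi{}_h\rrt$ (and a boundary contribution that vanishes because $\upsi{}_h\in\uG{}^2_{ht}$ has zero tangential moment on $\mathcal{E}_h^b$ and $\uphi=\nabla w$ with $w\in H^1_0$, $\dv\uphi=\Delta w=0$ on $\Gamma$). Because elements of $\uG{}^2_{ht}$ have vanishing zeroth-order edge moments across interior edges, one subtracts edge averages of $\partial_{\bf n}\uphi$ and applies the Bramble--Hilbert lemma on the edge patches to bound this by $C(\sum_T h_T^{2m-4}|\uphi|_{m-1,T}^2)^{1/2}\|\nabla_h\upsi{}_h\|_{0,\Omega}$, i.e.\ $O(h^{m-2})$ in $|\uphi|_{m-1}=|w|_m$. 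The best-approximation term is controlled by the standard $\uG{}^2_h$ (Fortin--Soulie) interpolation estimate, also $O(h^{m-2})$. Combining gives \eqref{eq:interrTnormn}.

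For the $H^1$ (i.e.\ $L^2$-in-gradient) estimate \eqref{eq:interrTnormnh1} on convex $\Omega$ I would use an Aubin--Nitsche duality argument. Introduce the dual Stokes problem with right-hand side $\uphi-\uphi{}_h$; on a convex polygon its solution has $H^{1+\kappa}\times H^\kappa$ regularity with $1<\kappa\le 2$, and full $H^2\times H^1$ regularity (so $\kappa=2$) when $\Omega$ is a rectangle. Testing with $\uphi-\uphi{}_h$, inserting the nonconforming Galerkin orthogonality, and estimating the resulting approximation and consistency terms of both the primal and dual solutions yields $\|\uphi-\uphi{}_h\|_{0,\Omega}\le C h^{\kappa-1}\big(\|\nabla_h(\uphi-\uphi{}_h)\|_{0,\Omega}+\|p-p_h\|_{0,\Omega}+\text{data terms}\big)$, which combined with \eqref{eq:interrTnormn} gives the $h^{2m-4+\kappa}$ rate. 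Here $|w-\mathbb{I}^B_{ht}w|_{1,\Omega}=\|\nabla w-\nabla\mathbb{I}^B_{ht}w\|_{0,\Omega}=\|\uphi-\uphi{}_h\|_{0,\Omega}$ exactly, so the duality estimate for the velocity is literally the claim.

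The main obstacle is the bookkeeping of the consistency (nonconformity) terms for the \emph{mixed} nonconforming scheme on a \emph{general} shape-regular mesh with a Navier-type boundary condition: one must verify that all boundary edge contributions genuinely cancel using $w\in H^1_0$ and $\Delta w|_\Gamma=0$ together with the defining constraints of $\uG{}^2_{ht}$, and that the interior edge terms can be localized to edge patches so that Bramble--Hilbert applies with the correct powers of $h_T$ uniformly in the shape-regularity constant. The duality argument for \eqref{eq:interrTnormnh1} carries the extra subtlety that the elliptic regularity exponent $\kappa$ for the Stokes system on a convex polygon is in general strictly below $2$, which is exactly why the stated range $1<\kappa\le 2$ (rather than $\kappa=2$) appears, with the rectangular case being the one where $\kappa=2$ is available.
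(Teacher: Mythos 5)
Your proposal is correct and follows essentially the same route as the paper: the paper's proof simply identifies the interpolation error of $\mathbb{I}^B_{ht}$ with the discretization error of the auxiliary Stokes problem \eqref{eq:auxstokesn}, invokes the standard nonconforming mixed-method analysis, and cites Blum--Rannacher for the regularity (hence $\kappa$, with $\kappa=2$ on rectangles). Your write-up just makes explicit the Strang-lemma and duality steps that the paper leaves as ``standard technique.''
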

\begin{proof}
By definition, the interpolation error of $\mathbb{I}^B_{ht}$ is the discretization error of \eqref{eq:auxstokesn}, and \eqref{eq:interrTnormn} and \eqref{eq:interrTnormnh1} can be obtained by standard technique (with $\Omega$ either convex or nonconvex). We only have to note that the regularity of the auxiliary Stokes problem on convexs domain can be affected under the boundary condition of this kind. Specifically, we refer to \cite{Blum.H;Rannacher.R1980} for the full regularity of \eqref{eq:modeln} and thus of the auxiliary Stokes problem \eqref{eq:stokest} on rectangles. 
\end{proof}

\subsection{Convergence analysis of the nonconforming scheme}

For suitable $\varphi$ and $\psi$, define the bilinear forms
\begin{equation}\label{eq:res1def}
\mathcal{R}_h^1(\varphi,\psi):=(\nabla^2\varphi,\nabla_h^2\psi)+(\nabla\Delta \varphi,\nabla_h\psi),
\end{equation}
\begin{equation}\label{eq:res2def}
\mathcal{R}_h^2(\varphi,\psi):=(\nabla\Delta \varphi,\nabla_h\psi)+(\Delta^2\varphi,\psi),
\end{equation}
and 
\begin{equation}\label{eq:resdef}
\mathcal{R}_h(\varphi,\psi):=\mathcal{R}_h^1(\varphi,\psi)-\mathcal{R}_h^2(\varphi,\psi).
\end{equation}

\begin{lemma}\label{lem:res}
There exists a constant $C$ such that it holds for any $\varphi\in H^2_0(\Omega)\cap H^k(\Omega)$, $w_h\in B^3_{h0}+H^2_0(\Omega)$, and $k=3,4$ that,
\begin{equation}\label{eq:res1}
\displaystyle\mathcal{R}_h^1(\varphi,w_h)\leqslant Ch^{k-2}|\varphi|_{k,\Omega}\|\nabla_h^2w_h\|_{0,\Omega},
\end{equation}

\begin{equation}
\label{eq:res2}
\displaystyle\mathcal{R}_h^2(\varphi,w_h)\leqslant Ch^{k-2}(|\varphi|_{k,\Omega}+h^2\|\Delta^2\varphi\|_{0,\Omega})\|\nabla_h^2w_h\|_{0,\Omega},
\end{equation}
\end{lemma}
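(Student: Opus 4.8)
The plan is to estimate the two residual forms $\mathcal{R}_h^1$ and $\mathcal{R}_h^2$ by integrating by parts cell-by-cell and exploiting the weak continuity built into $B^3_{h0}$, in the same spirit as classical consistency-error analyses for nonconforming plate elements (Morley, etc.). For \eqref{eq:res1}, I would start from the cell-by-cell Green's identity
\begin{equation*}
(\nabla^2\varphi,\nabla_h^2 w_h)=-\sum_{T}\int_T \nabla\Delta\varphi\cdot\nabla w_h+\sum_{T}\int_{\partial T}(\nabla^2\varphi\,\mathbf n)\cdot\nabla w_h,
\end{equation*}
so that $\mathcal{R}_h^1(\varphi,w_h)=\sum_{T}\int_{\partial T}(\nabla^2\varphi\,\mathbf n)\cdot\nabla w_h$. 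Decomposing $\nabla w_h$ on each edge into its normal and tangential components and summing over $\mathcal T_h$, the interior-edge contributions collapse to jump terms $\int_e \llbracket\partial_{\mathbf n}w_h\rrbracket(\cdot)+\int_e\llbracket\partial_{\mathbf t}w_h\rrbracket(\cdot)$, and the boundary-edge terms vanish because $w_h\in B^3_{h0}$ forces $\partial_{\mathbf n}w_h$ and $\partial_{\mathbf t}w_h$ to have vanishing moments up to order $1$ on $\partial\Omega$. Since $\partial_{\mathbf n}w_h$ and $\partial_{\mathbf t}w_h$ are themselves piecewise polynomials whose first-order (i.e. degree-$\le 1$) moments jump-vanish across each interior edge, I would subtract the $L^2(e)$-projection of the smooth factor $\nabla^2\varphi\,\mathbf n$ onto $P_1(e)$ for free, and then close the estimate with a trace/scaling inequality together with the Bramble--Hilbert lemma, picking up $h_T^{k-2}|\varphi|_{k,T}$ against $\|\nabla_h^2 w_h\|_{0,T}$ on the adjacent cells; a Cauchy--Schwarz over edges gives \eqref{eq:res1}. (The same argument applies when $w_h\in H^2_0(\Omega)$, where the residual is simply zero.)

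For \eqref{eq:res2}, I would again integrate by parts on each $T$, now the other way: $\sum_T\int_T\nabla\Delta\varphi\cdot\nabla w_h=-\sum_T\int_T\Delta^2\varphi\, w_h+\sum_T\int_{\partial T}\partial_{\mathbf n}\Delta\varphi\, w_h$, whence $\mathcal{R}_h^2(\varphi,w_h)=\sum_T\int_{\partial T}\partial_{\mathbf n}\Delta\varphi\, w_h$. On interior edges this reduces to $\int_e\llbracket w_h\rrbracket(\cdot)$, which vanishes to zeroth order since $\int_e\llbracket w_h\rrbracket=0$ for $B^3_h$; combined with vertex-continuity this actually kills the $P_0$ part of $w_h$ jumps and, together with $\int_e\llbracket\partial_{\mathbf t}w_h\rrbracket p_e=0$ for $p_e\in P_1(e)$, lets me control $\llbracket w_h\rrbracket$ in a weighted sense. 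On boundary edges $\int_e w_h=0$ handles the $P_0$ mode. The subtlety here is that $\partial_{\mathbf n}\Delta\varphi$ involves third derivatives of $\varphi$, so when $k=3$ the trace $\partial_{\mathbf n}\Delta\varphi|_e$ is only in $H^{-1/2}(e)$-type spaces and one cannot simply take pointwise traces; this is why the estimate carries the extra term $h^2\|\Delta^2\varphi\|_{0,\Omega}$. I would handle this by writing $\Delta^2\varphi\in L^2(\Omega)$ and applying a duality/interpolation argument (or equivalently, on each element, bounding $\int_{\partial T}\partial_{\mathbf n}\Delta\varphi\,(w_h-\bar w_h)$ via a local trace inequality for $H(\mathrm{div})$-type quantities: $\|\partial_{\mathbf n}\Delta\varphi\|_{-1/2,\partial T}\lesssim |\varphi|_{3,T}+h_T\|\Delta^2\varphi\|_{0,T}$), which after Bramble--Hilbert and scaling yields exactly \eqref{eq:res2}.

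The main obstacle I anticipate is precisely this low-regularity case $k=3$ in \eqref{eq:res2}: for $\varphi\in H^3$ only, $\nabla\Delta\varphi\in L^2$ and $\Delta^2\varphi$ need not be better than a distribution unless one uses the hypothesis $\Delta^2\varphi\in L^2(\Omega)$ explicitly, so the boundary term $\int_{\partial T}\partial_{\mathbf n}\Delta\varphi\, w_h$ must be interpreted and bounded through an $H(\mathrm{div})$ normal-trace pairing rather than a classical surface integral. Getting the $h$-powers right in that pairing — namely that the ``deficit'' from using $\Delta^2\varphi$ instead of a genuine trace costs exactly one extra power of $h$, giving the $h^2\|\Delta^2\varphi\|_{0,\Omega}$ term against the $h^{k-2}$ prefactor — is the delicate point; everything else is routine scaling, trace inequalities, and Bramble--Hilbert estimates applied edge by edge and summed with Cauchy--Schwarz.
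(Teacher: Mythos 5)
Your treatment of \eqref{eq:res1} is essentially the paper's: one first upgrades the defining constraints of $B^3_{h0}$ to the full gradient-jump moment condition $\fint_e p_e\llbracket\nabla w_h\rrbracket_e=0$ for all $p_e\in P_1(e)$ (the tangential part coming from $\int_e\llbracket w_h\rrbracket=0$ together with vertex continuity, by integrating by parts along $e$), and then the standard edge-projection, trace and Bramble--Hilbert argument gives \eqref{eq:res1}; no issue there.

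For \eqref{eq:res2}, however, there is a genuine gap exactly at the point you flag, and the repair you propose does not close it. After the elementwise Green identity, the boundary term $\sum_T\langle\partial_{\mathbf n}\Delta\varphi,w_h\rangle_{\partial T}$ only carries smallness through inter-element cancellation, i.e.\ through the jumps of $w_h$; but for $k=3$ the normal trace of $\nabla\Delta\varphi$ lives only in $H^{-1/2}(\partial T)$ (using $\Delta^2\varphi\in L^2$), and this duality cannot be localized to single edges, so the reduction to terms of the form $\int_e\llbracket w_h\rrbracket\,\partial_{\mathbf n}\Delta\varphi$, and the per-edge subtraction of $\bar w_h$ or of an edge projection of the trace, is not meaningful. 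If instead you keep the pairing on all of $\partial T$ and estimate element by element with $w_h-\bar w_h$, you lose both the cancellation and the Bramble--Hilbert gain: subtracting a constant vector $\mathbf c$ from $\nabla\Delta\varphi$ leaves $\langle\mathbf c\cdot\mathbf n,\cdot\rangle_{\partial T}=\int_T\mathbf c\cdot\nabla(\cdot)$, which does not vanish against $w_h-\bar w_h$. The missing ingredient is a globally conforming comparison function with an elementwise gradient-mean property, and this is precisely the paper's device: it introduces the nodal interpolant $\Pi_h^2w_h\in\mathcal{L}^2_{h0}\subset H^1_0(\Omega)$ (matching vertex values and edge averages), for which $\fint_T\mathbf c\cdot\nabla(w_h-\Pi_h^2w_h)=0$ on every $T$, uses the global identity $(\Delta^2\varphi,\Pi_h^2w_h)=-(\nabla\Delta\varphi,\nabla\Pi_h^2w_h)$, and is then left with two volume terms, $(\nabla\Delta\varphi,\nabla_h(w_h-\Pi_h^2w_h))$ and $(\Delta^2\varphi,w_h-\Pi_h^2w_h)$, which are bounded by Bramble--Hilbert and interpolation estimates; no trace of third derivatives of $\varphi$ ever appears, so $k=3$ causes no difficulty and the $h^2\|\Delta^2\varphi\|_{0,\Omega}$ term falls out directly. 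Your $H(\mathrm{div})$ normal-trace idea can be rescued, but only after inserting such a conforming, gradient-mean-preserving interpolant, at which point the boundary pairings convert back into exactly the paper's volume terms; as written, your argument for \eqref{eq:res2} in the case $k=3$ is incomplete.
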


\begin{proof}
Given $e\in\mathcal{E}_h$, by the definition of $B_{h0}^3$, $\fint_{e}p_e\llbracket\partial_{\mathbf{n}_e}w_h\rrbracket_e=0,\ p_e\in P_1(e)$; for the tangential direction, $\fint_{e}p_e\llbracket\partial_{\mathbf t_e}w_h\rrbracket_e=(p_e(L_e)\llbracket w_h\rrbracket_e(L_e)-p_e(R_e)\llbracket w_h\rrbracket_e(R_e))-\fint_{e}\partial_{\mathbf t_e}p_e\llbracket w_h\rrbracket_e=0$. Hence, 
\begin{equation}\label{eq:momentc3+}
\fint_{e}p_e\llbracket\nabla w_h\rrbracket_e=\undertilde{0},\ \forall\, p_e\in P_1(e),\ \ e\in\mathcal{E}_h.
\end{equation}
Therefore, \eqref{eq:res1} follows by standard techniques.

Now, define $\Pi_h^2$ the nodal interpolation to $\mathcal{L}_{h0}^2$ by
$$
(\Pi_h^2w)(a)=w(a),\ \forall\,a\in\mathcal{X}_h^i;\quad \fint_e(\Pi_h^2w)=\fint_ew,\ \forall\,e\in\mathcal{E}_h^i.
$$
It is easy to verify that the operator is well-defined. Moreover,
\begin{equation}\label{eq:orthpi2}
\fint_T\undertilde{c}\cdot\nabla (w-\Pi_h^2w)=0,\ \ \forall\,\undertilde{c}\in\mathbb{R}^2\ \mbox{and}\ T\in\mathcal{T}_h, \ \mbox{provided}\ w\in H^2_0(\Omega)+B^3_{h0}.
\end{equation}
By Green's formula,
\begin{equation}
(\Delta^2u,\Pi_h^2 w_h)=-(\nabla\Delta u,\nabla \Pi_h^2 w_h).
\end{equation}
Therefore, 
$$
\displaystyle\mathcal{R}_h^2(\varphi,w_h)=(\nabla \Delta u,\nabla_h(w_h-\Pi_h^2w_h))+(\Delta^2u,w_h-\Pi_h^2w_h):=I_1+I_2.
$$
By \eqref{eq:orthpi2},
$$
I_1=\inf_{\undertilde{c}\in(\mathbb{P}_h^0)^2}\left(\left[\nabla\Delta u-\undertilde{c}\right],\nabla_h(\Pi_h^2w_h-w_h)\right)\leqslant C(h^{k-2}|u|_{k,\Omega}+h^2\|\Delta u\|_{0,\Omega})\|\nabla_h^2w_h\|_{0,\Omega}.
$$
Further, 
$$
I_2\leqslant Ch^2\|\Delta^2u\|_{0,\Omega}\|\nabla_h^2w_h\|_{0,\Omega}.
$$
Summing all above proves \eqref{eq:res2}. 
\end{proof}

Similarly, we have the lemma below. 
\begin{lemma}\label{lem:resn}
There exists a constant $C$ such that it holds for any $\varphi\in H^1_0(\Omega)\cap H^k(\Omega)$ so that $(\Delta \varphi)|_\Gamma=0$, $w_h\in B^3_{ht}+H^2(\Omega)\cap H^1_0(\Omega)$ and $k=3,4$ that, 
\begin{equation}\label{eq:res1n}
\displaystyle\mathcal{R}_h^1(\varphi,w_h)\leqslant Ch^{k-2}|\varphi|_{k,\Omega}\|\nabla_h^2w_h\|_{0,\Omega},
\end{equation}

\begin{equation}
\label{eq:res2}
\displaystyle\mathcal{R}_h^2(\varphi,w_h)\leqslant Ch^{k-2}(|\varphi|_{k,\Omega}+h^2\|\Delta^2\varphi\|_{0,\Omega})\|\nabla_h^2w_h\|_{0,\Omega}.
\end{equation}
\end{lemma}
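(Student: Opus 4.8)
The plan is to imitate the proof of Lemma~\ref{lem:res} step by step, isolating the two places where the Navier condition (equivalently, the passage from $B^3_{h0}$ to $B^3_{ht}$) reduces the available interfacial information. First I would record the weak continuity of $w_h\in B^3_{ht}+H^2(\Omega)\cap H^1_0(\Omega)$: on every interior edge $e\in\mathcal{E}_h^i$ the full moment relation \eqref{eq:momentc3+}, i.e., $\fint_e p_e\llbracket\nabla w_h\rrbracket_e=\undertilde{0}$ for all $p_e\in P_1(e)$, still holds (from vertex continuity and the conditions $\int_e p_e\llbracket\partial_{\mathbf{n}}v\rrbracket=0$ in the definition of $B^3_h$), whereas on a boundary edge $e\in\mathcal{E}_h^b$ only the tangential part survives, $\fint_e p_e\,\partial_{\mathbf{t}_e}w_h=0$ for all $p_e\in P_1(e)$ (from $v(a)=0$ at boundary vertices, $\int_e v=0$ on boundary edges, and integration by parts along $e$, as in the proof of Lemma~\ref{eq:imp3}), while $\partial_{\mathbf{n}_e}w_h$ is now uncontrolled on $\partial\Omega$.

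To prove \eqref{eq:res1n} I would integrate $\mathcal{R}_h^1(\varphi,w_h)$ by parts cell by cell; since $\varphi\in H^k(\Omega)$ with $k\geqslant3$ the trace of $\nabla^2\varphi$ on each edge is single-valued, and the identity collapses to $\sum_{e\in\mathcal{E}_h^i}\int_e(\nabla^2\varphi\,\mathbf{n}_e)\cdot\llbracket\nabla w_h\rrbracket_e+\sum_{e\in\mathcal{E}_h^b}\int_e(\nabla^2\varphi\,\mathbf{n}_e)\cdot\nabla w_h$. The interior-edge sum is handled exactly as in Lemma~\ref{lem:res}: subtract the $L^2(e)$-best approximation of $\nabla^2\varphi\,\mathbf{n}_e$ in $(P_1(e))^2$, use the vanishing jump moments, and close with a scaled trace inequality, which yields $Ch^{k-2}|\varphi|_{k,\Omega}\|\nabla_h^2w_h\|_{0,\Omega}$. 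For the boundary-edge sum, decompose $\nabla w_h=(\partial_{\mathbf{n}_e}w_h)\mathbf{n}_e+(\partial_{\mathbf{t}_e}w_h)\mathbf{t}_e$: the tangential contribution $\int_e\big((\nabla^2\varphi\,\mathbf{n}_e)\cdot\mathbf{t}_e\big)\partial_{\mathbf{t}_e}w_h$ is bounded just like an interior term via the tangential moment relation; the normal contribution $\int_e(\partial_{\mathbf{n}_e\mathbf{n}_e}\varphi)\,\partial_{\mathbf{n}_e}w_h$, where $\partial_{\mathbf{n}_e}w_h$ is uncontrolled, vanishes identically, because $\partial\Omega$ is polygonal: on the straight segment $e$ the local frame $(\mathbf{n}_e,\mathbf{t}_e)$ is constant, so $\Delta=\partial_{\mathbf{n}_e\mathbf{n}_e}+\partial_{\mathbf{t}_e\mathbf{t}_e}$ there; $\varphi|_\Gamma=0$ gives $\partial_{\mathbf{t}_e\mathbf{t}_e}\varphi|_e=0$, whence $\partial_{\mathbf{n}_e\mathbf{n}_e}\varphi|_e=\Delta\varphi|_e-\partial_{\mathbf{t}_e\mathbf{t}_e}\varphi|_e=0$ by the hypothesis $(\Delta\varphi)|_\Gamma=0$. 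This gives \eqref{eq:res1n}.

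For the estimate on $\mathcal{R}_h^2$ the proof of Lemma~\ref{lem:res} carries over almost verbatim. The nodal interpolation $\Pi_h^2$ onto $\mathcal{L}^2_{h0}$ is still well defined on $B^3_{ht}+H^2(\Omega)\cap H^1_0(\Omega)$, because such functions have vanishing vertex values at boundary vertices and vanishing integrals over boundary edges; consequently $\int_e(w_h-\Pi_h^2w_h)=0$ on \emph{every} edge of every $T\in\mathcal{T}_h$, so that $\fint_T\undertilde{c}\cdot\nabla(w_h-\Pi_h^2w_h)=0$ for every constant vector $\undertilde{c}$, and $\Pi_h^2w_h\in\mathcal{L}^2_{h0}\subset H^1_0(\Omega)$ justifies Green's formula $(\Delta^2\varphi,\Pi_h^2w_h)=-(\nabla\Delta\varphi,\nabla\Pi_h^2w_h)$. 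Writing $\mathcal{R}_h^2(\varphi,w_h)=(\nabla\Delta\varphi,\nabla_h(w_h-\Pi_h^2w_h))+(\Delta^2\varphi,w_h-\Pi_h^2w_h)$ and estimating the two terms by the cell-wise approximation properties of $\Pi_h^2$ and a piecewise-constant approximation of $\nabla\Delta\varphi$ gives the stated bound. The one genuinely new ingredient, and the step I expect to be the main obstacle, is the boundary-edge term of $\mathcal{R}_h^1$: it is precisely the compatibility $\varphi|_\Gamma=\Delta\varphi|_\Gamma=0$ together with the flatness of the edges of $\partial\Omega$ that forces $\partial_{\mathbf{n}_e\mathbf{n}_e}\varphi$ to vanish there, and thereby removes the need for normal-derivative control on boundary edges which $B^3_{ht}$, unlike $B^3_{h0}$, does not supply.
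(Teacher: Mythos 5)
Your proposal is correct and follows essentially the route the paper intends: the paper gives no separate proof of this lemma, stating only that it is obtained ``similarly'' to Lemma \ref{lem:res}, and your argument is exactly that adaptation (interior edges via \eqref{eq:momentc3+}, the interpolant $\Pi_h^2$ and the orthogonality \eqref{eq:orthpi2} for $\mathcal{R}_h^2$). Your explicit treatment of the boundary edges --- tangential moments from $v(a)=0$ and $\int_e v=0$, and the vanishing of $\partial_{\mathbf{n}_e\mathbf{n}_e}\varphi$ on each straight boundary edge from $\varphi|_\Gamma=\Delta\varphi|_\Gamma=0$ --- is precisely the ingredient the paper leaves implicit, and it is handled correctly.
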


\paragraph{\bf Proof of Theorem \ref{thm:ratep3}} 
The proof follows a similar approach as the one in \cite{Shi.Z1990}, with some technical modifications. By Strang lemma,
$$
\|\nabla_h^2(u-u_h)\|_{0,\Omega}\cequiv \inf_{v_h\in B^3_{h0}}\|\nabla_h^2(u-v_h)\|_{0,\Omega}+\sup_{v_h\in B^3_{h0}\setminus\{\mathbf{0}\}}\frac{(\nabla^2u,\nabla^2_hv_h)-(f,v_h)}{\|\nabla_h^2v_h\|_{0,\Omega}}.
$$
The approximation error estimate follows by Lemma \ref{lem:approxb3}. By Lemma \ref{lem:res}, 
$$
(\nabla^2u,\nabla^2_hv_h)-(f,v_h)=(\nabla^2u,\nabla^2_hv_h)-(\Delta^2u,v_h)=\mathcal{R}_h(u,v_h)\leqslant Ch^2|u|_{4,\Omega}\|\nabla_h^2v_h\|_{0,\Omega},
$$which completes the proof of \eqref{eq:h2error3}.

Now, we turn our attention to the proof of \eqref{eq:h1error3} for convex $\Omega$. Denote $u^\Pi_h=\mathbb{I}^B_{h0}u$. Then, by Lemma \ref{lem:approxb3}, $\|\nabla_h^j(u-u^\Pi_h)\|_{0,\Omega}\leqslant Ch^{4-j}|u|_{4,\Omega}$, $j=1,2$. Denote by $\Pi_h^1$ the nodal interpolation onto $\mathcal{L}^1_{h0}$, then $\Pi_h^1(u^\Pi_h-u_h)\in H^1_0(\Omega)$. Set $\varphi\in H^3(\Omega)\cap H^2_0(\Omega)$ such that 
$$
(\nabla^2\varphi,\nabla^2 v)=(\nabla\Pi_h^1(u^\Pi_h-u_h),\nabla v),\quad\forall\,v\in H^2_0(\Omega),
$$
then when $\Omega$ is convex, $\|\varphi\|_{3,\Omega}\cequiv \|\Pi_h^1(u^\Pi_h-u_h)\|_{1,\Omega}$. By Green's formula, 
\begin{multline*}\label{eq:shi23'}
\|\nabla\Pi_h^1(u^\Pi_h-u_h)\|_{0,\Omega}^2=-(\nabla\Delta\varphi,\nabla \Pi_h^1(u_h^\Pi-u_h))=-(\nabla\Delta\varphi,\nabla\Pi_h^1(u_h^\Pi-u))-(\nabla\Delta \varphi,\nabla\Pi_h^1(u-u_h))
\\
=(\nabla\Delta\varphi\cdot\nabla ({\rm Id}-\Pi_h^1)(u^\Pi_h-u_h))-(\nabla\Delta \varphi\cdot\nabla(u^\Pi_h-u))-(\nabla\Delta \varphi\cdot\nabla(u-u_h)):=I_1+I_2+I_3.
\end{multline*}
Further, set $\varphi_h^\Pi=\mathbb{I}^B_{h0}\varphi$, and
\begin{multline*}
I_3=(\nabla^2\varphi,\nabla_h^2(u-u_h))+\mathcal{R}_h^1(\varphi,u-u_h)
=
-(\nabla_h^2(\varphi-\varphi_h^\Pi),\nabla_h^2(u-u_h))-(\nabla_h^2\varphi_h^\Pi,\nabla_h^2(u-u_h))+\mathcal{R}_h^1(\varphi,u-u_h)
\\
=
-(\nabla_h^2(\varphi-\varphi_h^\Pi),\nabla_h^2(u-u_h))+\mathcal{R}_h(u,\varphi-\varphi_h^\Pi)+\mathcal{R}_h^1(\varphi,u-u_h).
\end{multline*}
Therefore, $\|\nabla\Pi_h^1(u^\Pi_h-u_h)\|_{0,\Omega}^2\leqslant C|\varphi|_{3,\Omega}(h^{m-1}|u|_{m,\Omega}+h^3\|\Delta^2 u\|_{0,\Omega}),$ and $\|\nabla\Pi_h^1(u_h^\Pi-u_h)\|_{0,\Omega}\leqslant C(h^{m-1}|u|_{m,\Omega}+h^3\|\Delta^2 u\|_{0,\Omega})$. Finally,
\begin{multline*}
\|\nabla_h(u-u_h)\|_{0,\Omega}\leqslant \|\nabla_h(u-u_h^\Pi)\|_{0,\Omega}+\|\nabla_h(u_h^\Pi-u_h)\|_{0,\Omega}
\\
\leqslant  
\|\nabla_h(u-u_h^\Pi)\|_{0,\Omega}+\|\nabla_h[(u_h^\Pi-u_h)-\Pi_h^1(u_h^\Pi-u_h)]\|_{0,\Omega}+\|\nabla\Pi_h^1(u_h^\Pi-u_h)\|_{0,\Omega} 
\\
\leqslant C(h^{m-1}|u|_{m,\Omega}+h^3\|\Delta^2u\|_{0,\Omega}).\qquad
\end{multline*}

The proof of \eqref{eq:h1error3n} and \eqref{eq:h2error3n} is basically the same. The convergence rate for the $H^1$ norm of the error is slightly lost due to the lost of the regularity of the model problem \eqref{eq:modeld} on general convex polygons. The proof is completed.\qed

\subsection{A variant formulation for bi-Laplacian equation with varying coefficient}

The bi-Laplacian equation $\Delta (\mathcal{A}\Delta u)=f$, where $\mathcal{A}$ is a non-constant coefficient with positive lower and upper bounds, is frequently dealt with in applications. The equation arises in, e.g., the Helmholtz transmission eigenvalue problem in acoustics (c.f., e.g., \cite{Colton.D;Monk.P1988,Kirsch.A1986,Xi.Y;Ji.X;Zhang.S2019+}). The variational problem is to find $u\in H^2_0(\Omega)$ such that 
\begin{equation}\label{eq:bhvpnd}
(\mathcal{A}\Delta u,\Delta v)=(f,v),\quad\forall\,v\in H^2_0(\Omega).
\end{equation}
Correspondingly, we consider the nonconforming finite element discretization:
\begin{quote} 
find $u_h\in B^3_{h0}$ such that 
\begin{equation}\label{eq:bhvpp3nd}
\tilde{a}_h(u_h,v_h):=(\mathcal{A}\Delta_hu_h,\Delta_hv_h)=(f,v_h),\quad\forall\,v_h\in B^3_{h0}.
\end{equation}
\end{quote}
\begin{lemma}\label{lem:wpnd}
The finite element problem \eqref{eq:bhvpp3nd} admits a unique solution.
\end{lemma}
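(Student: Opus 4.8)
The plan is to reduce the well-posedness of \eqref{eq:bhvpp3nd} to a coercivity statement for $\tilde a_h(\cdot,\cdot)$ on the finite-dimensional space $B^3_{h0}$. Since the system is square, existence and uniqueness will both follow from showing that $\tilde a_h(v_h,v_h)=0$ forces $v_h=0$. Concretely, I would first invoke the hypothesis on $\mathcal{A}$: there exist constants $0<a_0\leqslant a_1<\infty$ with $a_0\leqslant\mathcal{A}(x)\leqslant a_1$ a.e.\ in $\Omega$. Then $\tilde a_h(v_h,v_h)=(\mathcal{A}\Delta_hv_h,\Delta_hv_h)\geqslant a_0\|\Delta_hv_h\|_{0,\Omega}^2$.

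The crucial step — and this is exactly where the earlier theory pays off — is to upgrade $\|\Delta_hv_h\|_{0,\Omega}$ into a genuine norm on $B^3_{h0}$. By Lemma \ref{lem:hes=lap} (applied with $w_h=v_h\in B^3_{h0}\subset B^3_{ht}$), we have $\|\Delta_hv_h\|_{0,\Omega}^2=(\nabla_h^2v_h,\nabla_h^2v_h)=\|\nabla_h^2v_h\|_{0,\Omega}^2=|v_h|_{2,h}^2$. As already noted in the text right after \eqref{eq:bhvpp3n}, the weak continuity built into $B^3_{ht}$ (hence $B^3_{h0}$) makes $|\cdot|_{2,h}$ a norm on these spaces — a broken Poincar\'e/Friedrichs-type argument using the vanishing vertex values, zero edge means, and zero normal-derivative moments on boundary edges, together with the moment-continuity $\eqref{eq:momentc3+}$ across interior edges. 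Therefore $\tilde a_h(v_h,v_h)\geqslant a_0|v_h|_{2,h}^2$, i.e.\ $\tilde a_h$ is coercive on $B^3_{h0}$ with respect to $\|\cdot\|_{2,h}$, and it is trivially bounded by $a_1$. The Lax--Milgram lemma (or just the rank--nullity theorem in finite dimensions) then yields a unique $u_h\in B^3_{h0}$ solving \eqref{eq:bhvpp3nd}.

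The only real obstacle is the identity $\|\Delta_hv_h\|_{0,\Omega}=\|\nabla_h^2v_h\|_{0,\Omega}$: for a generic nonconforming piecewise-polynomial space this fails (the Morley element being a standard cautionary example), and coercivity of $\tilde a_h$ in the full $H^2$-broken norm would be lost. Here it is rescued precisely by Lemma \ref{lem:hes=lap}, which itself rests on the containment $\nabla_hB^3_{ht}\subset\uG{}^2_{ht}$ (Lemma \ref{eq:imp3}) and the conforming-like splitting identity of Lemma \ref{lem:h1=d+c}. So the proof is essentially a one-line citation of Lemma \ref{lem:hes=lap} followed by the bounds on $\mathcal{A}$; I would state it in that order, keeping it short.
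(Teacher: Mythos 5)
Your proposal is correct and follows essentially the same route as the paper: coercivity of $\tilde a_h(\cdot,\cdot)$ on $B^3_{h0}$ with respect to $|\cdot|_{2,h}$ via Lemma \ref{lem:hes=lap} together with the bounds on $\mathcal{A}$, concluded by the Lax--Milgram lemma. You merely spell out the intermediate steps (the bounds $a_0\leqslant\mathcal{A}\leqslant a_1$ and the fact that $|\cdot|_{2,h}$ is a norm on $B^3_{ht}\supset B^3_{h0}$) that the paper leaves implicit.
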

\begin{proof}
By Lemma \ref{lem:hes=lap},  the bilinear form $\tilde{a}_h(\cdot,\cdot)$ is coercive on $B^3_{h0}$ with respect to the norm $|\cdot|_{2,h}$. The well-posedness of  \eqref{eq:bhvpp3nd} follows by Lax-Milgrem lemma. The proof is completed. 
\end{proof}

Similar to Theorem \ref{thm:ratep3}, we can establish and prove the theorem below. 
\begin{theorem}\label{thm:ratep3nd}
Let $u$ and $u_h$ be solutions of \eqref{eq:bhvpnd} and \eqref{eq:bhvpp3nd}, respectively. Then, with a generic constant $C$ depending on $\mathcal{A}$, $\Omega$ and the regularity of the grid only, it holds for $u\in H^m(\Omega)$, $m=3,4$, that
\begin{equation}\label{eq:h2error3nd}
\|\nabla_h^2(u-u_h)\|_{0,\Omega}\leqslant C(h^{m-2}|u|_{m,\Omega}+h^2\|f\|_{0,\Omega}).
\end{equation}
Moreover, when $\Omega$ is convex, 
\begin{equation}\label{eq:h1error3nd}
\|\nabla_h(u-u_h)\|_{0,\Omega}\leqslant C(h^{m-1}|u|_{m,\Omega}+h^3\|f\|_{0,\Omega}).
\end{equation}
\end{theorem}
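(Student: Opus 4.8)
The plan is to repeat the argument behind Theorem~\ref{thm:ratep3} (the framework of \cite{Shi.Z1990}), with $(\nabla_h^2\cdot,\nabla_h^2\cdot)$ replaced by $\tilde a_h(\cdot,\cdot)=(\mathcal{A}\Delta_h\cdot,\Delta_h\cdot)$. First I would record that, by Lemma~\ref{lem:hes=lap}, if $0<\alpha\le\mathcal{A}\le\beta$ then $\alpha\|\nabla_h^2 v_h\|_{0,\Omega}^2=\alpha\|\Delta_h v_h\|_{0,\Omega}^2\le\tilde a_h(v_h,v_h)$ while $\tilde a_h(w_h,v_h)\le\beta\|\nabla_h^2 w_h\|_{0,\Omega}\|\nabla_h^2 v_h\|_{0,\Omega}$ on $B^3_{h0}$; hence the second Strang lemma applies and gives
\begin{equation*}
\|\nabla_h^2(u-u_h)\|_{0,\Omega}\cequiv\inf_{v_h\in B^3_{h0}}\|\nabla_h^2(u-v_h)\|_{0,\Omega}+\sup_{0\ne w_h\in B^3_{h0}}\frac{\tilde a_h(u,w_h)-(f,w_h)}{\|\nabla_h^2 w_h\|_{0,\Omega}},
\end{equation*}
with constants now also depending on $\alpha,\beta$. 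The approximation term is bounded by $\|\nabla_h^2(u-\mathbb{I}_{h0}^B u)\|_{0,\Omega}\le Ch^{m-2}|u|_{m,\Omega}$ through Lemma~\ref{lem:approxb3}.

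The essential new point is the consistency term. Set $g:=\mathcal{A}\Delta u$, so $\Delta g=f$; for $\mathcal{A}$ smooth enough, $g\in H^1(\Omega)$ when $u\in H^3(\Omega)$ and $g\in H^2(\Omega)$ when $u\in H^4(\Omega)$. Writing $\tilde a_h(u,w_h)-(f,w_h)=(g,\Delta_h w_h)-(\Delta g,w_h)=\tilde{\mathcal{R}}_h^1(g,w_h)-\tilde{\mathcal{R}}_h^2(g,w_h)$ with $\tilde{\mathcal{R}}_h^1(g,w_h):=(g,\Delta_h w_h)+(\nabla g,\nabla_h w_h)$ and $\tilde{\mathcal{R}}_h^2(g,w_h):=(\nabla g,\nabla_h w_h)+(\Delta g,w_h)$, I would establish the analogue of Lemma~\ref{lem:res}. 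For $\tilde{\mathcal{R}}_h^1$, cell-by-cell integration by parts gives $\sum_{e\in\mathcal{E}_h}\int_e g\,\llbracket\partial_{\mathbf{n}_e}w_h\rrbracket_e$ (a trace on boundary edges); since $\llbracket\nabla w_h\rrbracket_e$ is $L^2(e)$-orthogonal to $P_1(e)$ on every edge — this is exactly \eqref{eq:momentc3+} for $w_h\in B^3_{h0}$ — one subtracts the $P_1(e)$-projection of $g$ and combines the trace/Bramble--Hilbert estimate for $\|g-P_1^e g\|_{0,e}$ with $\|\llbracket\nabla w_h\rrbracket_e\|_{0,e}\le Ch_e^{1/2}\|\nabla_h^2 w_h\|_{0,\omega_e}$ to get $\tilde{\mathcal{R}}_h^1(g,w_h)\le Ch^{m-2}|g|_{m-2,\Omega}\|\nabla_h^2 w_h\|_{0,\Omega}$ (reading $|g|_{1,\Omega}$ as $\|\nabla g\|_{0,\Omega}$ when $m=3$). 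For $\tilde{\mathcal{R}}_h^2$, using the nodal interpolation $\Pi_h^2$ onto $\mathcal{L}^2_{h0}$ as in the proof of Lemma~\ref{lem:res}, the conforming part $(\nabla g,\nabla\Pi_h^2 w_h)+(\Delta g,\Pi_h^2 w_h)$ vanishes (Green's formula, since $\Pi_h^2 w_h\in H^1_0(\Omega)$), leaving $(\nabla g,\nabla_h(w_h-\Pi_h^2 w_h))+(f,w_h-\Pi_h^2 w_h)$, which by \eqref{eq:orthpi2} together with $\|\nabla_h(w_h-\Pi_h^2 w_h)\|_{0,\Omega}\le Ch\|\nabla_h^2 w_h\|_{0,\Omega}$ and $\|w_h-\Pi_h^2 w_h\|_{0,\Omega}\le Ch^2\|\nabla_h^2 w_h\|_{0,\Omega}$ is bounded by $C(h^{m-2}|g|_{m-2,\Omega}+h^2\|f\|_{0,\Omega})\|\nabla_h^2 w_h\|_{0,\Omega}$. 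Finally I would bound $|g|_{m-2,\Omega}=|\mathcal{A}\Delta u|_{m-2,\Omega}\le C(\mathcal{A})(|u|_{m,\Omega}+\|\Delta u\|_{0,\Omega})$ — using the interpolation inequality $|u|_{3,\Omega}\le C(|u|_{2,\Omega}+|u|_{4,\Omega})$ in the case $m=4$ — and use the a priori estimate $\|\Delta u\|_{0,\Omega}\cequiv|u|_{2,\Omega}\le C\|f\|_{0,\Omega}$, obtained by testing \eqref{eq:bhvpnd} with $v=u$, to absorb the lower-order part into the $\|f\|$-term. This gives the consistency bound $C(h^{m-2}|u|_{m,\Omega}+h^2\|f\|_{0,\Omega})$ and hence \eqref{eq:h2error3nd}; as in Theorem~\ref{thm:ratep3}, no convexity is used here.

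For the $H^1$-estimate \eqref{eq:h1error3nd} on a convex $\Omega$ I would transcribe the duality argument from the proof of Theorem~\ref{thm:ratep3}: put $u_h^\Pi:=\mathbb{I}_{h0}^B u$, let $\Pi_h^1$ be the nodal interpolation onto $\mathcal{L}^1_{h0}$ so that $\Pi_h^1(u_h^\Pi-u_h)\in H^1_0(\Omega)$, take $\varphi\in H^2_0(\Omega)$ solving $(\mathcal{A}\Delta\varphi,\Delta v)=(\nabla\Pi_h^1(u_h^\Pi-u_h),\nabla v)$ for all $v\in H^2_0(\Omega)$ (so that $\|\varphi\|_{3,\Omega}\cequiv\|\Pi_h^1(u_h^\Pi-u_h)\|_{1,\Omega}$ on a convex polygon, for $\mathcal{A}$ smooth), and expand $\|\nabla\Pi_h^1(u_h^\Pi-u_h)\|_{0,\Omega}^2$ via Green's formula, inserting $\varphi_h^\Pi=\mathbb{I}_{h0}^B\varphi$ and using the $H^2$-estimate just proved, the residual bounds above applied to both $u$ and $\varphi$, and the approximation properties $\|\nabla_h^j(u-u_h^\Pi)\|_{0,\Omega}\le Ch^{m-j}|u|_{m,\Omega}$, $j=1,2$, from Lemma~\ref{lem:approxb3}. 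The main obstacle is this consistency analysis: establishing the jump smallness estimate $\|\llbracket\nabla w_h\rrbracket_e\|_{0,e}\le Ch_e^{1/2}\|\nabla_h^2 w_h\|_{0,\omega_e}$ (which uses both the $P_1(e)$-moment continuity built into $B^3_{h0}$ and a local inverse inequality), and keeping careful track of how the derivatives of the non-constant coefficient $\mathcal{A}$, which fall on lower-order derivatives of $u$, are absorbed through the a priori bound and the interpolation inequalities. Once these are in hand, the remainder is a routine transcription of the constant-coefficient proof.
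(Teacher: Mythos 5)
Your proposal follows exactly the route the paper intends: the paper proves Theorem~\ref{thm:ratep3nd} only by remarking that it is ``similar to Theorem~\ref{thm:ratep3}'', and your plan — coercivity of $\tilde a_h$ from Lemma~\ref{lem:hes=lap}, the Strang lemma, the interpolant $\mathbb{I}^B_{h0}$ of Lemma~\ref{lem:approxb3}, consistency residuals built from $g=\mathcal{A}\Delta u$ in analogy with Lemma~\ref{lem:res}, and the same duality argument for \eqref{eq:h1error3nd} — is precisely that adaptation, so it is essentially the same proof. One bookkeeping caution: for $m=3$, absorbing the lower-order part of $|g|_{1,\Omega}$ via $\|\Delta u\|_{0,\Omega}\leqslant C\|f\|_{0,\Omega}$ produces an $h\|f\|_{0,\Omega}$ term rather than $h^2\|f\|_{0,\Omega}$; instead use $\|\Delta u\|_{0,\Omega}=|u|_{2,\Omega}\leqslant C|u|_{3,\Omega}$ (valid since $\int_\Omega\partial_{ij}u=0$ for $u\in H^2_0(\Omega)$), so that this contribution is absorbed into $h|u|_{3,\Omega}$ and the stated bound follows.
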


\begin{remark}
For the bi-Laplacian equation with non-constant coefficient $\mathcal{A}$, the finite element scheme of the formulation \eqref{eq:bhvpp3nd} is a natural alternative. When the formulation \eqref{eq:bhvpp3nd} is used on, e.g., the Morley element, however, the scheme is not well-posed without extra stabilisations. Higher regularity of $B^3_{h0}$ here makes it fit for the formulation \eqref{eq:bhvpp3nd}.
\end{remark}

\begin{remark}\label{rem:laplapnavier}
Similarly, by Lemma \ref{lem:hes=lap}, a bilinear form induced by $(\mathcal{A}\Delta_h\cdot,\Delta_h\cdot)$ can be used for $\Delta(\mathcal{A}\Delta u)=f$ with Navier type boundary condition. 
\end{remark}

\section{On the implementation of the schemes}
\label{sec:imp}

In this section, we present two approaches to implement the schemes. One is to figure out the locally supported basis functions of $B^3_{h0}$ and $B^3_{ht}$, and the other is to decompose the finite element system to three sub-problems to be solved sequentially. The former approach makes the scheme fit for the general finite element programing procedure, and the latter approach, as the sub-problems are Poisson systems and a Stokes system, makes the finite element problems optimally solvable. 
\subsection{Locally supported basis functions of the finite element spaces}

\subsubsection{Structure of weakly rot-free space}

Let $T$ be a triangle with $a_i$ and $e_i$, $i=1,2,3$, being its vertices and edges. Define $\uP{}_2(\rot,w0):=\{\up\in\uP{}_2(T):\int_T\rot\up=0\}$. Then $\dim(\uP{}_2(\rot,w0))=11$. 

Denote

\fbox{
	\begin{minipage}{0.97\textwidth}
\begin{description}
\item[$\ueta{}_{a_i}^x$] such that $\ueta{}_{a_i}^x(a_j)=(\delta_{ij},0)^\top$; $\fint_{e_k}\ueta{}_{a_i}^x=\undertilde{0}$; $i,j,k=1,2,3$;

\item[$\ueta{}_{a_i}^y$] such that $\ueta{}_{a_i}^y(a_j)=(0,\delta_{ij})^\top$; $\fint_{e_k}\ueta{}^y_{a_i}=\undertilde{0}$; $i,j,k=1,2,3$.

\item[$\ueta{}_{a_i}$] such that $\ueta{}_{a_i}(a_j)=\undertilde{0}$;  $\fint_{e_k}\ueta{}_{a_i}\cdot\mathbf{t}_{e_k,a_i}=1-\delta_{ik}$ and $\int_{e_k}\ueta{}_{a_i}\cdot\mathbf{n}_{e_k,a_i}=0$, where $\mathbf{t}_{e_k,a_i}$ is the unit tangential vector along $e_k$ starting from $a_i$ and $\mathbf{n}_{e_k,a_i}$ is the normal direction of $e_k$; $i,j,k=1,2,3$;

\item[$\ueta{}_{e_i}$] such that $\ueta{}_{e_i}(a_j)=\undertilde{0}$; $\fint_{e_k}\ueta{}_{e_i}\cdot\utau{}_{e_k}=0$, $\fint_{e_k}\ueta{}_{e_i}\cdot\mathbf{n}_{e_k}=\delta_{ik}$; $i,j,k=1,2,3$.
\end{description}
\end{minipage}}
The functions form a frame of $\uP{}_2(\rot,w0)$. Indeed, $\ueta{}_{e_1}+\ueta{}_{e_2}+\ueta{}_{e_3}=0$, while we have the lemma below.
\begin{lemma}\label{lem:p2w0frame}
All $\ueta{}^x_{a_i}$, $\ueta{}^y_{a_i}$ and $\ueta{}_{e_i}$ for $i=1,2,3$ and any two of $\ueta{}_{a_i}$ among $i=1,2,3$ form a basis of $\uP{}_2(\rot,w0)$. 
\end{lemma}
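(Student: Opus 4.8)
The statement to prove is that the listed functions---$\ueta^x_{a_i}$, $\ueta^y_{a_i}$, $\ueta_{e_i}$ for $i=1,2,3$, together with any two of the three $\ueta_{a_i}$---form a basis of $\uP{}_2(\rot,w0)$. Since $\dim\uP{}_2(\rot,w0)=11$ and the proposed set has exactly $3+3+3+2=11$ elements, it suffices to establish \emph{either} that these functions span $\uP{}_2(\rot,w0)$ \emph{or} that they are linearly independent; I would go for linear independence, since the degrees of freedom are set up to make this a near-triangular unisolvence check. The plan is: (i) verify that each of the listed functions actually lies in $\uP{}_2(\rot,w0)$, i.e.\ is a genuine $\uP{}_2(T)$ vector field with $\int_T\rot\,\cdot=0$; (ii) check that the twelve functionals used to define them (three vertex values in each component---six total---plus the three tangential-moment functionals $\fint_{e_k}\cdot\!\cdot\mathbf{t}_{e_k}$ and three normal-moment functionals $\fint_{e_k}\cdot\!\cdot\mathbf{n}_{e_k}$) are well-defined on $\uP{}_2(T)$ and that the relation $\sum_i\fint_{e_i}\up\cdot\mathbf{t}_{e_i}$ (suitably oriented) together with $\int_T\rot\up=0$ explains exactly why only two of the $\ueta_{a_i}$ survive; (iii) run the unisolvence argument.

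For step (i), the existence of $\ueta^x_{a_i},\ueta^y_{a_i}$ needs a short argument: a vector field in $\uP{}_2(T)$ has $12$ scalar coefficients, the constraints $\ueta^x_{a_i}(a_j)=(\delta_{ij},0)^\top$ and $\fint_{e_k}\ueta^x_{a_i}=\undertilde0$ impose $6+6=12$ conditions, but the rot-free-in-mean condition is automatic here because the quadratic enrichment bubbles $\undertilde{\mathcal{B}}{}^2_{h0}$ (the $(\lambda_1^2+\lambda_2^2+\lambda_3^2-2/3)$-type fields) have vanishing first edge moments and hence are not reached. Concretely I would invoke the $\uG{}^2_h$ / Fortin--Soulie structure already recalled in the paper: on a single cell, the space of $P_2$ vector fields with vanishing mean edge moments is precisely $\uS{}^2|_T\oplus$(bubbles), and the map to vertex values plus mean edge values is a bijection onto the non-bubble part, which has dimension $10$; then one checks the two remaining dimensions of $\uP{}_2(\rot,w0)$ are carried by $\ueta_{a_i}$ (tangential-jump type) and $\ueta_{e_i}$ (normal type), of which the relations $\ueta_{e_1}+\ueta_{e_2}+\ueta_{e_3}=0$ and $\sum\ueta_{a_i}=0$ (this last following from $\int_T\rot=0$ applied to the tangential circulation) cut $3+3$ down to $2+2$. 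I expect some care in getting orientations right so that ``any two of the three'' is symmetric.

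For step (iii), the unisolvence: suppose $\up\in\uP{}_2(\rot,w0)$ is a combination of the eleven functions and all eleven coordinates vanish. Applying the vertex-value functionals kills the $\ueta^x_{a_i},\ueta^y_{a_i}$ coefficients and shows $\up(a_j)=\undertilde0$ for all $j$; applying the normal edge moments $\fint_{e_k}\cdot\!\cdot\mathbf{n}_{e_k}$ (noting $\ueta_{a_i}$ and $\ueta^{x,y}_{a_i}$ have vanishing such moments by construction) kills the $\ueta_{e_i}$ coefficients; finally the tangential edge moments $\fint_{e_k}\cdot\!\cdot\mathbf{t}_{e_k}$, restricted to the two retained $\ueta_{a_i}$, give a $2\times2$ system whose matrix (entries $1-\delta_{ik}$, i.e.\ off-diagonal ones) is invertible, forcing the last two coefficients to zero. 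Hence the eleven functions are linearly independent in an $11$-dimensional space, so they form a basis.

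\textbf{Main obstacle.} The only genuinely delicate point is \emph{not} the final $2\times2$ linear algebra but step (i)--(ii): cleanly justifying that $\dim\uP{}_2(\rot,w0)=11$ and that, within it, the ``vertex + mean-edge'' degrees of freedom are unisolvent on the codimension-$2$ piece spanned by $\{\ueta^x_{a_i},\ueta^y_{a_i}\}$, while the complementary $2$ dimensions split as one tangential ($\ueta_{a_i}$ modulo $\sum=0$) plus one normal ($\ueta_{e_i}$ modulo $\sum=0$) direction---and in particular verifying the two relations $\ueta_{e_1}+\ueta_{e_2}+\ueta_{e_3}=0$ and $\sum_i\ueta_{a_i}=\undertilde0$. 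The second relation is exactly where the constraint $\int_T\rot\up=0$ enters (by Green's formula, $\int_T\rot\up$ equals the total tangential circulation $\sum_k\int_{e_k}\up\cdot\mathbf{t}_{e_k}$ with consistent orientation), so I would prove it by that identity rather than by coordinates. Once these structural facts are in hand, the rest is routine.
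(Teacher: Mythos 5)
Your step (iii) is the right verification and is essentially the whole proof (the paper states this lemma without giving one): since $\dim\uP{}_2(\rot,w0)=11$ and the listed set has eleven members, linear independence suffices, and the defining functionals act triangularly --- vertex values kill the coefficients of $\ueta{}^x_{a_i},\ueta{}^y_{a_i}$, the normal edge moments then kill those of $\ueta{}_{e_i}$, and the tangential moments on the two retained $\ueta{}_{a_i}$ give an invertible (anti-diagonal, entries $\pm1$) $2\times2$ block. However, your structural step (ii) contains a genuine error. The relation $\ueta{}_{e_1}+\ueta{}_{e_2}+\ueta{}_{e_3}=0$, which you take over from the paper's preceding sentence and combine with $\ueta{}_{a_1}+\ueta{}_{a_2}+\ueta{}_{a_3}=0$ to ``cut $3+3$ down to $2+2$'', is false and is incompatible both with the lemma (all three $\ueta{}_{e_i}$ belong to the claimed basis) and with your own step (iii): since $\fint_{e_k}\ueta{}_{e_i}\cdot\mathbf{n}_{e_k}=\delta_{ik}$, the sum of the $\ueta{}_{e_i}$ has unit mean normal trace on every edge, so $\int_T\dv(\ueta{}_{e_1}+\ueta{}_{e_2}+\ueta{}_{e_3})=\sum_k\pm|e_k|\neq0$. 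The paper's displayed relation is evidently a typo for $\ueta{}_{a_1}+\ueta{}_{a_2}+\ueta{}_{a_3}=0$, which is the only dependence and is exactly why only two of the $\ueta{}_{a_i}$ enter; your bookkeeping $6+2+2=10$ contradicts the eleven functions you then prove independent. Note also that the $\ueta{}_{a_i}$-relation is proved by unisolvence of the twelve degrees of freedom (on each edge the two tangents $\mathbf{t}_{e_k,a_i}$ and $\mathbf{t}_{e_k,a_j}$ are opposite, so every degree of freedom of the sum vanishes), not by ``$\int_T\rot=0$ applied to the circulation''.

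The second point you must actually settle is the membership check you announce in step (i) but never carry out: with the tangential conditions read literally as averages, $\ueta{}_{a_i}$ does \emph{not} lie in $\uP{}_2(\rot,w0)$ on a general triangle, because your own Green identity gives $\int_T\rot\ueta{}_{a_1}=\pm\bigl(|e_3|-|e_2|\bigr)$ (the circulation acquires edge-length weights when the data are means $\fint_{e_k}$). The conditions have to be taken as unweighted integrals $\int_{e_k}\ueta{}_{a_i}\cdot\mathbf{t}_{e_k,a_i}=1-\delta_{ik}$, consistent with the normal condition which is already written with $\int$; with that normalization the membership of all twelve functions follows at once from the circulation identity (all other functions have vanishing tangential means), and your step (iii) goes through verbatim. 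Finally, existence and uniqueness of each $\ueta$ needs only the componentwise unisolvence of vertex values plus edge means on $P_2$ (a Simpson-rule argument), so the detour through the Fortin--Soulie bubble decomposition, which as phrased (``bijection onto the non-bubble part, which has dimension $10$'') is dimensionally off, should simply be dropped.
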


Analogically, denote  $\uS{}^2_h(\rot,w0):=\{\uv\in \uS{}^2_h:(\rot\uv,q)=0,\ \forall\,q\in\mathbb{P}^0_h\}$, $\uS{}^2_{h0}(\rot,w0):=\{\uv\in\uS{}^2_{h0}:(\rot\uv,q)=0,\ \forall\,q\in\mathbb{P}^0_{h0}\}$ and $\uS{}^2_{ht}(\rot,w0):=\{\uv\in\uS{}^2_{ht}:(\rot\uv,q)=0,\ \forall\,q\in\mathbb{P}^0_{h0}\}$.

Meanwhile, for $a\in\mathcal{X}_h$, denote by $P_a$ the union of triangles of which $a$ is a vertex, namely the patch associated with $a$; for $e\in\mathcal{E}_h$,  denote by $P_e$ the patch associated with $e$. Denote, with respect to $a\in\mathcal{X}_h$ and $e\in\mathcal{E}_h$, functions in $\uS{}^2_h$ as,

\fbox{
	\begin{minipage}{0.97\textwidth}
\begin{description}
\item[$\uphi{}_a^x$] such that $\uphi{}_a^x(a)=(1,0)^\top$; $\uphi{}_a^x(a')=\undertilde{0}$ on $a\neq a'\in\mathcal{X}_h$; $\fint_{e'}\uphi{}_a^x=\undertilde{0}$ on $e'\in\mathcal{E}_h$;

\item[$\uphi{}_a^y$] such that $\uphi{}_a^y(a)=(0,1)^\top$; $\uphi{}_a^y(a')=\undertilde{0}$ on $a\neq a'\in\mathcal{X}_h$; $\fint_{e'}\uphi{}^y_a=\undertilde{0}$ on $e'\in\mathcal{E}_h^i$;

\item[$\uphi{}_{P_a}$] such that $\uphi{}_{P_a}(a')=\undertilde{0}$ on $ a'\in\mathcal{X}_h$; $\fint_e\uphi{}_{P_a}=\undertilde{0}$ on $e\in\mathcal{E}_h$ and $a\not\in e$; $\fint_e\uphi{}_{P_a}\cdot\mathbf{t}_{e,P_a}=1$ and $\int_e\uphi{}_{P_a}\cdot\mathbf{n}_{e,P_a}=0$ on $e\subset P_a$ and $a\in e$, where $\mathbf{t}_{e,P_a}$ is the unit tangential vector along $e$ starting from $a$ and $\mathbf{n}_{e,P_a}$ is the anticlockwise normal direction of $e$ with respect to $P_a$;

\item[$\uphi{}_e$] such that $\fint_e\uphi{}_e\cdot\utau{}_e=0$, $\fint_e\uphi{}_e\cdot\mathbf{n}_e=1$, and $\uphi{}_e$ vanishes on $\Omega\setminus \mathring{P_e}$.
\end{description}
\end{minipage}}

\begin{lemma}\label{eq:strucs2h0w0}
The set $\{\uphi{}^x_a,\uphi{}^y_a,\uphi{}_{P_a},\uphi{}_e \}_{a\in\mathcal{X}^i_h,\ e\in\mathcal{E}^i_h}$ forms a basis of $\uS{}^2_{h0}(\rot,w0)$; namely
\begin{equation}\label{eq:basisweakrotfree}
\uS{}^2_{h0}(\rot,w0)={\rm span}\{\uphi{}_a^x \}_{a\in\mathcal{X}_h^i}\oplus {\rm span}\{\uphi{}_a^y \}_{a\in\mathcal{X}_h^i}\oplus{\rm span}\{\uphi{}_e \}_{e\in\mathcal{E}_h^i}\oplus{\rm span}\{\uphi{}_{P_a} \}_{a\in\mathcal{X}_h^i}.
\end{equation}
\end{lemma}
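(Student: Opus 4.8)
The plan is to prove the asserted decomposition \eqref{eq:basisweakrotfree} by a dimension count: I would exhibit the $3\#(\mathcal{X}_h^i)+\#(\mathcal{E}_h^i)$ stated functions, check that each lies in $\uS{}^2_{h0}(\rot,w0)$ and that the whole collection is linearly independent, and separately verify that this number equals $\dim\uS{}^2_{h0}(\rot,w0)$. A linearly independent family of the correct cardinality inside the space is then automatically a basis, and the four-fold direct-sum splitting follows at once because the four families are distinguished by mutually disjoint standard degrees of freedom of $\uS{}^2_h$ (vertex values for $\uphi{}^x_a,\uphi{}^y_a$; normal edge averages for $\uphi{}_e$; tangential edge averages for $\uphi{}_{P_a}$).

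First I would pin down the dimension. Using the standard unisolvent degrees of freedom of $\uS{}^2_h$ (vertex values and edge averages) one has $\dim\uS{}^2_{h0}=2\#(\mathcal{X}_h^i)+2\#(\mathcal{E}_h^i)$. Since any $\uv\in\uS{}^2_{h0}\subset\uH{}^1_0(\Omega)$ has $\int_\Omega\rot\uv=0$, the cellwise mean $\uv\mapsto\big(|T|^{-1}\int_T\rot\uv\big)_{T\in\mathcal{T}_h}$ maps $\uS{}^2_{h0}$ into $\mathbb{P}^0_{h0}$, and $\uS{}^2_{h0}(\rot,w0)$ is exactly its kernel (annihilating $\mathbb{P}^0_{h0}$ against $\rot\uv$ forces this mean-zero piecewise constant to vanish). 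So I only need that this map is onto $\mathbb{P}^0_{h0}$: given $q_h\in\mathbb{P}^1_{h0}$, the $\rot$-form of Lemma~\ref{lem:pGL} (the remark after it) furnishes $\uv_h\in\uG{}^2_{h0}$ with $\rot_h\uv_h=q_h$; writing $\uv_h=\uv_h^S+\uv_h^B$ along $\uG{}^2_{h0}=\uS{}^2_{h0}\oplus\undertilde{\mathcal{B}}{}^2_{h0}$ and using that every bubble in $\undertilde{\mathcal{B}}{}^2_{h0}$ has vanishing average on each edge, Stokes' formula $\int_T\rot\upsi=\int_{\partial T}\upsi\cdot\mathbf{t}\,\mathrm{ds}$ gives $\int_T\rot\uv_h^B=0$ on every $T$, so the cellwise mean of $\rot_h\uv_h^S$ equals the $\mathbb{P}^0$-projection of $q_h$, which exhausts $\mathbb{P}^0_{h0}$. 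Hence $\dim\uS{}^2_{h0}(\rot,w0)=2\#(\mathcal{X}_h^i)+2\#(\mathcal{E}_h^i)-(\#(\mathcal{T}_h)-1)$, and the Euler formula $\#(\mathcal{X}_h^i)-\#(\mathcal{E}_h^i)+\#(\mathcal{T}_h)=1$ rewrites this as $3\#(\mathcal{X}_h^i)+\#(\mathcal{E}_h^i)$, exactly the count of the proposed functions.

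Next I would handle the four families. Well-definedness and localization are routine: each function is the unique element of $\uS{}^2_h$ with the prescribed standard degrees of freedom, it vanishes on every cell disjoint from the named patch $P_a$ or $P_e$ (all its degrees of freedom there being zero), and it lies in $\uS{}^2_{h0}$ since on any boundary edge its two endpoint values and its average all vanish. The one point that needs attention is membership in $\uS{}^2_{h0}(\rot,w0)$, i.e.\ $\int_T\rot\uphi=0$ on every cell $T$; by Stokes this reads $\int_{\partial T}\uphi\cdot\mathbf{t}\,\mathrm{ds}=0$, which I would read off the prescribed edge data: on each edge of $T$ the average of the function is either $\undertilde{0}$ (for $\uphi{}^x_a,\uphi{}^y_a$, and for $\uphi{}_e,\uphi{}_{P_a}$ on the edges not carrying their nonzero data), or purely normal (for $\uphi{}_e$ on $e$, so $\uphi{}_e\cdot\mathbf{t}$ averages to zero there), or, for $\uphi{}_{P_a}$, a tangential vector of equal magnitude on the two edges of $T$ through $a$ which enter $\int_{\partial T}$ with opposite signs. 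This last cancellation requires the normalization in the definition of $\uphi{}_{P_a}$ along the edges through $a$ to be read as $\int_e\uphi{}_{P_a}\cdot\mathbf{t}_{e,P_a}=1$ (equivalently $\fint_e\uphi{}_{P_a}\cdot\mathbf{t}_{e,P_a}=|e|^{-1}$), which is the intended scaling.

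Finally, linear independence. Given $\sum_a(\alpha_a\uphi{}^x_a+\beta_a\uphi{}^y_a)+\sum_e\gamma_e\uphi{}_e+\sum_a\delta_a\uphi{}_{P_a}=0$: evaluating at each interior vertex kills all $\alpha_a,\beta_a$ (the edge and patch functions vanish at vertices); then applying the normal edge average $\fint_e(\,\cdot\,)\cdot\mathbf{n}_e$ on each interior edge $e$ kills all $\gamma_e$ (every $\uphi{}_{P_a}$ has zero normal average on $e$, and $\uphi{}_{e'}$ with $e'\neq e$ has zero average on $e$); and then applying the tangential edge average on each interior edge $e=[a',a'']$ gives $\delta_{a'}-\delta_{a''}=0$, so $\delta$ is constant on $\mathcal{X}_h^i$ by connectedness, whereas $\sum_{a\in\mathcal{X}_h^i}\uphi{}_{P_a}\neq\undertilde{0}$ — on any cell meeting $\partial\Omega$ the bubble along an edge joining an interior vertex to a boundary vertex is not cancelled — forcing $\delta\equiv0$. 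Combining with the dimension count finishes the proof. I expect the dimension count of the second paragraph — in particular the observation that the Fortin--Soulie bubbles contribute nothing to the cellwise mean of $\rot_h$, which is what lets Lemma~\ref{lem:pGL} be transported from $\uG{}^2_{h0}$ to $\uS{}^2_{h0}$ — to be the main obstacle; once the orientation and normalization conventions are fixed, the remaining steps are bookkeeping.
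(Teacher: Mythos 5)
Your proof is correct and takes essentially the same route as the paper's: check that the listed functions lie in the space, prove linear independence of the $\{\uphi{}_{P_a}\}$ family through tangential edge averages anchored at the boundary vertices, and conclude from the dimension count $\dim\uS{}^2_{h0}(\rot,w0)=\dim\uS{}^2_{h0}-\dim\mathbb{P}^0_{h0}=3\#(\mathcal{X}_h^i)+\#(\mathcal{E}_h^i)$ via the Euler formula. The extra details you supply --- the surjectivity of the cellwise mean of $\rot$ on $\uS{}^2_{h0}$ that underlies this dimension formula (via Lemma \ref{lem:pGL} and the vanishing cell means of the bubbles), and the observation that the tangential normalization of $\uphi{}_{P_a}$ must be read as an edge integral rather than an edge average for the weak rot-freeness to hold --- are points the paper passes over with ``by direct calculation,'' and your reading of that normalization is indeed the intended one.
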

\begin{proof}
By direct calculation, the functions $\uphi{}_a^x$, $\uphi{}_a^y$, $\uphi{}_{P_a}$ and $\uphi{}_e$ all belong to $\uS{}^2_{h0}(\rot,w0)$. By their definitions, the functions $\{\uphi{}^x_a,\uphi{}^y_a,\uphi{}_e \}_{a\in\mathcal{X}_h^i,\ e\in\mathcal{E}_h^i}$ are linearly independent, and the summation ${\rm span}\{\uphi{}_{P_a} \}_{a\in\mathcal{X}_h^i}+{\rm span}\{\uphi{}^x_a,\uphi{}^y_a,\uphi{}_e \}_{a\in\mathcal{X}_h^i,\ e\in\mathcal{E}_h^i}$ is direct. Since $\dim(\uS{}^2_{h0}(\rot,w0))=\dim(\uS{}^2_{h0})-\dim(\mathbb{P}^0_{h0})=\#(\mathcal(N)_{h0}^i)+3\#(\mathcal{X}_{h0}^i)$, it remains for us to show $\{\uphi{}_e \}_{e\in\mathcal{E}_h^i}$ are linearly independent. 

Assume there exist $\{\alpha_a \}_{a\in\mathcal{X}}\subset\mathbb{R}$ with $\alpha_a=0$ for $a\in\mathcal{X}_h^b$, such that  $\upsi=\sum_{a\in \mathcal{X}_{h}}\alpha_a\uphi{}_{P_a}\equiv 0$. By the definition of $\uphi{}_{P_a}$, for any $e\in\mathcal{E}_h^i$, $|\fint_{e}\upsi\cdot\mathbf{n}_e|=|\alpha_{a_e^L}-\alpha_{a_e^R}|$, where $a_e^L$ and $a_e^R$ are the two ends of $e$; thus $\alpha_{a_e^L}=\alpha_{a_e^R}$ for every $e\in\mathcal{E}_h^i$. Since $\alpha_a=0$ for $a\in \mathcal{X}_h^b$, $\alpha_a=0$ for $a\in\mathcal{X}_h^{b,+1}$; recursively, we obtain $\alpha_a=0$ for $a\in\mathcal{X}_h^{b,+j}$ level by level, and finally $\alpha_a=0$ for $a\in\mathcal{X}_h$. 

The proof is completed by noting the two sides of \eqref{eq:basisweakrotfree} have the same dimension. 
\end{proof}

For $a\in\mathcal{X}_h^b\setminus\mathcal{X}^c_h$, denote by $\mathbf{n}^b_a$ the outward unit normal vector of $\partial\Omega$ at $a$. Thus, for $a\in\mathcal{X}_h^b\setminus\mathcal{X}^c_h$, denote 
\begin{equation}
\uphi{}_a^b:=(\mathbf{n}^b_a)_x\uphi{}_a^x+(\mathbf{n}^b_a)_y\uphi{}_a^y.
\end{equation}
\begin{lemma}\label{eq:strucs2htw0}
$\uS{}^2_{ht}(\rot,w0)=\uS{}^2_{h0}(\rot,w0)\oplus{\rm span}\{\uphi{}_a^b:a\in \mathcal{X}_h^b\setminus\mathcal{X}^c_h\}\oplus{\rm span}\{\uphi{}_e:e\in\mathcal{E}_h^b\}$.
\end{lemma}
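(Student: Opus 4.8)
The plan is to follow the template of the proof of Lemma~\ref{eq:strucs2h0w0}: first check that the claimed generators belong to $\uS{}^2_{ht}(\rot,w0)$, then check that the three-fold sum is direct, and finally match dimensions by Euler's formula. For the inclusion, note first that $\uS{}^2_{h0}(\rot,w0)\subset\uS{}^2_{ht}(\rot,w0)$ trivially. For $a\in\mathcal{X}_h^b\setminus\mathcal{X}^c_h$, the function $\uphi{}_a^b=(\mathbf{n}^b_a)_x\uphi{}_a^x+(\mathbf{n}^b_a)_y\uphi{}_a^y$ vanishes at every vertex but $a$, where it takes the value $\mathbf{n}^b_a$, and has zero mean on every edge; since $a$ is not a corner, the two boundary edges at $a$ are collinear, so $\mathbf{n}^b_a\cdot\mathbf{t}_{e'}=0$ for each boundary edge $e'\ni a$, whence $\uphi{}_a^b\cdot\mathbf{t}_{e'}$ is a quadratic on $e'$ with both endpoint values and its mean equal to zero, hence identically zero; on boundary edges not containing $a$ it vanishes outright, so $\uphi{}_a^b\in\uS{}^2_{ht}$. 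Moreover a direct computation on an edge gives $\int_e\uphi{}_a^x\cdot\mathbf{t}_e=\int_e\uphi{}_a^y\cdot\mathbf{t}_e=0$ on every $e\in\mathcal{E}_h$, so $\int_T\rot\uphi{}_a^b=\int_{\partial T}\uphi{}_a^b\cdot\mathbf{t}=0$ for all $T\in\mathcal{T}_h$; in particular $\uphi{}_a^b$ is weakly rot-free. For $e\in\mathcal{E}_h^b$ the patch $P_e$ is a single triangle; $\uphi{}_e$ vanishes at all vertices and has zero mean on every edge other than $e$, and on $e$ its tangential trace is a quadratic with zero endpoint values and zero mean, hence zero, so $\uphi{}_e\in\uS{}^2_{ht}$, and again $\int_T\rot\uphi{}_e=0$ for all $T$. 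Thus the right-hand side is contained in the left.

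For directness, suppose $\uv_0+\sum_{a}c_a\uphi{}_a^b+\sum_{e}d_e\uphi{}_e=0$ with $\uv_0\in\uS{}^2_{h0}(\rot,w0)$. A function in $\uS{}^2_{h0}$ vanishes on $\partial\Omega$, hence at each boundary vertex and with zero mean on each boundary edge, and every $\uphi{}_e$ vanishes at all vertices. Evaluating the identity at a vertex $a_0\in\mathcal{X}_h^b\setminus\mathcal{X}^c_h$ leaves $c_{a_0}\mathbf{n}^b_{a_0}=\undertilde{0}$, so $c_{a_0}=0$; doing this for all such vertices reduces the identity to $\uv_0+\sum_e d_e\uphi{}_e=0$. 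Taking the mean over a boundary edge $e_0$, using that each $\uphi{}_e$ with $e\neq e_0$ has zero trace on $e_0$, that $\fint_{e_0}\uv_0=\undertilde{0}$, and that $\fint_{e_0}\uphi{}_{e_0}\cdot\mathbf{n}_{e_0}=1$, gives $d_{e_0}=0$; hence all $d_e=0$ and then $\uv_0=0$.

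It remains to match dimensions. As in the proof of Lemma~\ref{eq:strucs2h0w0}, and since $\uS{}^2_{h0}\subset\uS{}^2_{ht}$, the constraints $(\rot\uv,q)=0$, $q\in\mathbb{P}^0_{h0}$, are independent on $\uS{}^2_{ht}$, so $\dim\uS{}^2_{ht}(\rot,w0)=\dim\uS{}^2_{ht}-\dim\mathbb{P}^0_{h0}$. Counting the vector $P_2$ Lagrange degrees of freedom and subtracting the $H_0(\rot)$ boundary constraints — one at each non-corner boundary vertex, two at each corner, one at each boundary-edge midpoint — gives $\dim\uS{}^2_{ht}=2\#\mathcal{X}_h^i+2\#\mathcal{E}_h^i+2\#\mathcal{E}_h^b-\#\mathcal{X}^c_h$; together with $\dim\mathbb{P}^0_{h0}=\#\mathcal{T}_h-1$ and Euler's relation $\#\mathcal{X}_h^i-\#\mathcal{E}_h^i+\#\mathcal{T}_h=1$ this yields $\dim\uS{}^2_{ht}(\rot,w0)=3\#\mathcal{X}_h^i+\#\mathcal{E}_h^i+2\#\mathcal{E}_h^b-\#\mathcal{X}^c_h$, which is exactly $\dim\uS{}^2_{h0}(\rot,w0)+\#(\mathcal{X}_h^b\setminus\mathcal{X}^c_h)+\#\mathcal{E}_h^b$ by Lemma~\ref{eq:strucs2h0w0}. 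Combined with the inclusion and directness above, the decomposition follows.

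I expect the main obstacle to be the boundary bookkeeping in the first and last steps: one must use in an essential way that a non-corner boundary vertex lies on a straight portion of $\partial\Omega$, so that $\mathbf{n}^b_a$ is orthogonal to the adjacent boundary tangents — this is precisely what forces the tangential trace of $\uphi{}_a^b$ to vanish and, at the same time, is what makes the $H_0(\rot)$ constraint count one at non-corner vertices versus two at corners. Once this is handled, the directness argument and the Euler-formula count are routine variants of the interior case in Lemma~\ref{eq:strucs2h0w0}.
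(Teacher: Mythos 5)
Your proof is correct and takes essentially the same route as the paper's: show that the right-hand side is a direct sum contained in $\uS{}^2_{ht}(\rot,w0)$ and then match dimensions using $\dim\uS{}^2_{ht}(\rot,w0)=\dim\uS{}^2_{ht}-\dim\mathbb{P}^0_{h0}$ together with Euler's formula. You merely spell out the details the paper compresses — explicit verification that $\uphi{}_a^b$ and $\uphi{}_e$ lie in $\uS{}^2_{ht}(\rot,w0)$, a boundary degree-of-freedom argument for directness in place of the citation of Lemma \ref{lem:p2w0frame}, and the arithmetic of the dimension count.
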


\begin{proof}
By Lemma \ref{lem:p2w0frame}, $\uphi{}^b_a$ with $a\in \mathcal{X}_h^b\setminus\mathcal{X}^c_h$ are linearly independent, and the right hand side is a direct sum included in the left hand side. On the other hand, 
\begin{multline*}
\dim(\uS{}^2_{ht}(\rot,w0))=\dim(\uS{}^2_{ht})-\dim(\mathbb{P}^0_{h0})
=\dim(\uS{}^2_{h0}(\rot,w0))+\dim(\mathcal{X}_h^b\setminus\mathcal{X}^c_h)+\dim(\mathcal{E}_h^b)
\\
=\dim(\uS{}^2_{h0}(\rot,w0))+\dim({\rm span}\{\uphi{}_a^b:a\in \mathcal{X}_h^b\setminus\mathcal{X}^c_h\})+\dim({\rm span}\{\uphi{}_e:e\in\mathcal{E}_h^b\})=\dim(\mbox{right\ hand\ side}).
\end{multline*}
This proves the assertion. 
\end{proof}

\subsubsection{Structure of piecewise rot-free space}

Denote $\uP{}_2(\rot,0)=\{\up\in\uP{}_2(T):\rot\up=0\}$,  and $\dim(\uP{}_2(\rot,0))=9$. Denote by $\phi_T$ the bubble function $(\lambda_1^2+\lambda_2^2+\lambda_3^2)-2/3$, and define a mapping $\mathcal{F}_T$ from $\uP{}_2(\rot,w0)$ to $\uP{}_2(\rot,0)$ by
$$
\mathcal{F}_T\ueta=\ueta+\undertilde{\phi},\ \undertilde{\phi}\in{\rm span}\{(\phi_T,0),(0,\phi_T)\},\ \ \ \mbox{such\ that}\ \ \rot (\mathcal{F}_T\ueta)=0.
$$
Since $\int_T\rot\ueta=0$ in the formula above, the mapping is well defined. It can be verified that $\mathcal{F}_T(\ueta{}^x_{a_1}+\ueta{}^x_{a_2}+\ueta{}^x_{a_3})=0$ and $\mathcal{F}_T(\ueta{}^y_{a_1}+\ueta{}^y_{a_2}+\ueta{}^y_{a_3})=0$. A frame of $\uP{}_2(\rot,0)$ is presented in the lemma below. 
\begin{lemma}\label{lem:p20frame}
 Any two $\mathcal{F}_T\ueta{}^x_{a_i}$ among $i=1,2,3$, any two $\mathcal{F}_T\ueta{}^y_{a_i}$ among $i=1,2,3$, any two $\mathcal{F}_T\ueta{}_{a_i}$ among $i=1,2,3$, and all $\mathcal{F}_T\ueta{}_{e_i}$ for $i=1,2,3$ form a basis of $\uP{}_2(\rot,0)$.
\end{lemma}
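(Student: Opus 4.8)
The plan is to use the linear surjection $\mathcal{F}_T:\uP{}_2(\rot,w0)\to\uP{}_2(\rot,0)$ built just above, and to reduce the claim to a short linear-algebra argument about a known basis. First I would record that $\mathcal{F}_T$ is well-defined and linear: for any $\ueta\in\uP{}_2(\rot,w0)$ one has $\int_T\rot\ueta=0$, and $\rot$ maps ${\rm span}\{(\phi_T,0),(0,\phi_T)\}$ isomorphically onto the two-dimensional space of mean-zero linear polynomials on $T$ — because $\phi_T$ has vanishing integral on every edge of $T$, so $\partial_x\phi_T$ and $\partial_y\phi_T$ both have zero mean over $T$ and are linearly independent — hence the correction bubble $\undertilde{\phi}$ depends linearly on $\ueta$, so $\mathcal{F}_T$ is linear; it is onto, since every $\up\in\uP{}_2(\rot,0)\subseteq\uP{}_2(\rot,w0)$ satisfies $\mathcal{F}_T\up=\up$. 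Combining $\dim\uP{}_2(\rot,w0)=11$, $\dim\uP{}_2(\rot,0)=9$ with the facts that $(\phi_T,0),(0,\phi_T)\in\uP{}_2(\rot,w0)$ are each sent to $0$ (being themselves bubbles), we conclude $\ker\mathcal{F}_T={\rm span}\{(\phi_T,0),(0,\phi_T)\}$, a two-dimensional space.

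Next I would translate this kernel into frame language. By the identities $\mathcal{F}_T(\ueta{}^x_{a_1}+\ueta{}^x_{a_2}+\ueta{}^x_{a_3})=0$ and $\mathcal{F}_T(\ueta{}^y_{a_1}+\ueta{}^y_{a_2}+\ueta{}^y_{a_3})=0$ noted before the lemma, and since the six functions $\ueta{}^x_{a_1},\dots,\ueta{}^y_{a_3}$ belong to the basis of $\uP{}_2(\rot,w0)$ furnished by Lemma \ref{lem:p2w0frame} — so $\sum_i\ueta{}^x_{a_i}$ and $\sum_i\ueta{}^y_{a_i}$ are linearly independent — these two vectors span $\ker\mathcal{F}_T$. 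Now fix any choice of two indices among $\{1,2,3\}$ for the $\ueta{}_{a}$-functions, and let $\mathcal{B}$ be the corresponding basis of $\uP{}_2(\rot,w0)$ from Lemma \ref{lem:p2w0frame}, namely $\ueta{}^x_{a_1},\ueta{}^x_{a_2},\ueta{}^x_{a_3},\ueta{}^y_{a_1},\ueta{}^y_{a_2},\ueta{}^y_{a_3},\ueta{}_{e_1},\ueta{}_{e_2},\ueta{}_{e_3}$ together with the two chosen $\ueta{}_{a_i}$. Applying $\mathcal{F}_T$, the eleven vectors $\mathcal{F}_T w$, $w\in\mathcal{B}$, span the $9$-dimensional space $\uP{}_2(\rot,0)$, and a tuple $(\lambda_w)_{w\in\mathcal{B}}$ gives a relation $\sum_w\lambda_w\mathcal{F}_T w=0$ exactly when $\sum_w\lambda_w w\in\ker\mathcal{F}_T$; since $\ker\mathcal{F}_T={\rm span}\{\sum_i\ueta{}^x_{a_i},\sum_i\ueta{}^y_{a_i}\}$, the only relations are $\sum_i\mathcal{F}_T\ueta{}^x_{a_i}=0$ and $\sum_i\mathcal{F}_T\ueta{}^y_{a_i}=0$. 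Deleting one $\mathcal{F}_T\ueta{}^x_{a_\bullet}$ and one $\mathcal{F}_T\ueta{}^y_{a_\bullet}$ — any one of the three in each family — removes both relations, leaving nine images that still span $\uP{}_2(\rot,0)$ and are now free of relations, hence a basis. As the two deleted indices and the retained pair of $\ueta{}_{a_i}$ were arbitrary, this is precisely the assertion of the lemma.

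The main point — indeed the only step that is not pure bookkeeping — is the preparatory one: that $\mathcal{F}_T$ is a well-defined linear surjection with $\ker\mathcal{F}_T={\rm span}\{(\phi_T,0),(0,\phi_T)\}={\rm span}\{\sum_i\ueta{}^x_{a_i},\sum_i\ueta{}^y_{a_i}\}$. Even this reduces to the mapping property of $\rot$ on the bubble pair together with the two frame identities, both of which are short verifications already flagged in the text, so I would state it concisely and not dwell on it. A direct alternative — checking unisolvence of the defining functionals of the $\ueta$'s restricted to $\uP{}_2(\rot,0)$ — is available but more computational, and I would not pursue it.
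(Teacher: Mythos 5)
Your proposal is correct, and it follows what is evidently the paper's intended route: the paper states this lemma without proof, having just flagged the two identities $\mathcal{F}_T(\ueta{}^x_{a_1}+\ueta{}^x_{a_2}+\ueta{}^x_{a_3})=0$ and $\mathcal{F}_T(\ueta{}^y_{a_1}+\ueta{}^y_{a_2}+\ueta{}^y_{a_3})=0$, and your rank--nullity argument (surjectivity of $\mathcal{F}_T$ since it is the identity on $\uP{}_2(\rot,0)$, a two-dimensional kernel spanned by the bubble pair, hence exactly these two relations among the images of the eleven-element basis of Lemma \ref{lem:p2w0frame}, removed by deleting one $x$- and one $y$-image) supplies precisely the omitted verification. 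One small repair: the linear independence of $\partial_x\phi_T$ and $\partial_y\phi_T$ does not follow from the vanishing edge integrals of $\phi_T$ (those only give the zero means over $T$) but from, e.g., the positive definiteness of the Hessian of $\lambda_1^2+\lambda_2^2+\lambda_3^2$, so $\phi_T$ is constant along no direction; this is a one-line fix, and in any case the well-definedness of $\mathcal{F}_T$ is already asserted in the paper.
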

Denote $\uG{}^2_h(\rot,0):=\{\uv\in\uG{}^2_h:\rot_h\uv=0\}$, $\uG{}^2_{h0}(\rot,0):=\{\uv\in\uG{}^2_{h0}:\rot_h\uv=0\}$ and $\uG{}^2_{ht}(\rot,0):=\{\uv\in\uG{}^2_{ht}:\rot_h\uv=0\}$. Define an operator $\mathcal{F}_h:\uS{}^2_h(\rot,w0)\to \uG{}^2_h(\rot,0)$ by
\begin{equation}
\mathcal{F}_h\uphi{}_h=\uphi{}_h+\undertilde{\phi}{}_h,\ \undertilde{\phi}{}_h\in\undertilde{\mathcal{B}}{}^2_{h0},\ \ \ \mbox{such\ that}\ \ \rot_h (\mathcal{F}_h\uphi{}_h)=0.
\end{equation}
Since $\int_T\rot \uphi{}_h=0$ on any $T$ for $\uphi{}_h\in\uS{}^2_h(\rot,w0)$, $\mathcal{F}_h$ is well defined. Indeed,  $(\mathcal{F}_h\uphi{}_h)|_T=\mathcal{F}_T(\uphi{}_h|_T)$.

\begin{lemma}\label{eq:kernelbijec}
$\mathcal{F}_h$ is a bijection between $\uS{}^2_{h0}(\rot,w0):=\{\uv\in\uS{}^2_{h0}:(\rot\uv,q)=0,\ \forall\,q\in\mathbb{P}^0_{h0}\}$ and $\uG{}^2_{h0}(\rot,0):=\{\uv\in\uG{}^2_{h0}:\rot_h\uv=0\}$, and a bijection between $\uS{}^2_{ht}(\rot,w0):=\{\uv\in\uS{}^2_{ht}:(\rot\uv,q)=0,\ \forall\,q\in\mathbb{P}^0_{h0}\}$ and $\uG{}^2_{ht}(\rot,0):=\{\uv\in\uG{}^2_{ht}:\rot_h\uv=0\}$.
\end{lemma}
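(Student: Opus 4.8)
\textbf{Proof plan for Lemma \ref{eq:kernelbijec}.}
The plan is to establish the two claimed bijections by exploiting that $\mathcal{F}_h$ acts cell-by-cell via $\mathcal{F}_T$, so its injectivity is governed by the kernel of $\mathcal{F}_T$ on the local spaces, while its surjectivity reduces to matching dimensions already computed in the previous lemmas. First I would verify that $\mathcal{F}_h$ maps $\uS{}^2_{h0}(\rot,w0)$ into $\uG{}^2_{h0}(\rot,0)$: the added correction lies in $\undertilde{\mathcal{B}}{}^2_{h0}$, whose members have vanishing zeroth and first order edge moments, so the weak (zeroth-order moment) continuity defining $\uG{}^2_{h0}$ is preserved, as are the boundary conditions; and $\rot_h(\mathcal{F}_h\uphi{}_h)=0$ by construction. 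The same check gives $\mathcal{F}_h(\uS{}^2_{ht}(\rot,w0))\subset\uG{}^2_{ht}(\rot,0)$.

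Next I would prove injectivity. Suppose $\mathcal{F}_h\uphi{}_h=0$ for $\uphi{}_h\in\uS{}^2_{h0}(\rot,w0)$. Cell-by-cell this reads $\uphi{}_h|_T+\undertilde{\phi}{}_h|_T=0$ with $\undertilde{\phi}{}_h|_T\in{\rm span}\{(\phi_T,0),(0,\phi_T)\}$; since the zeroth and first order edge moments of the bubble part vanish, $\uphi{}_h$ has vanishing zeroth and first order edge moments on every edge, and $\uphi{}_h$ is continuous and $P_2$ — a continuous piecewise-$P_2$ vector field all of whose zeroth- and first-order edge moments vanish on every edge must be identically zero (on each edge its trace is a quadratic with three vanishing moments). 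Hence $\uphi{}_h=0$. The argument for $\uS{}^2_{ht}(\rot,w0)$ is identical. Then surjectivity follows by dimension count: $\mathcal{F}_h$ is an injective linear map between finite-dimensional spaces, and by Lemma \ref{eq:strucs2h0w0} together with the complex in Lemma \ref{lem:cmc} (or the decomposition $\uG{}^2_{h0}=\uS{}^2_{h0}\oplus\undertilde{\mathcal{B}}{}^2_{h0}$ and $\rot_h\undertilde{\mathcal{B}}{}^2_{h0}=\mathbb{P}^1_{h0}/\mathbb{P}^0_{h0}$-type counting) one checks $\dim\uS{}^2_{h0}(\rot,w0)=\dim\uG{}^2_{h0}(\rot,0)$; the same equality holds in the trace-space case using Lemma \ref{eq:strucs2htw0}. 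An injection between spaces of equal finite dimension is a bijection.

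The main obstacle I anticipate is the bookkeeping in the dimension count for $\uG{}^2_{h0}(\rot,0)$ and $\uG{}^2_{ht}(\rot,0)$: one must argue that imposing $\rot_h\uv=0$ cuts down $\uG{}^2_{h0}$ by exactly $\dim\mathbb{P}^1_{h0}$, which is where the surjectivity of $\rot_h:\uG{}^2_{h0}\to\mathbb{P}^1_{h0}$ (Lemma \ref{lem:pGL}) and the analogous statement for the trace space must be invoked carefully, keeping track of the constant-mode and boundary-DOF discrepancies between $\mathbb{P}^0_h$, $\mathbb{P}^0_{h0}$, and the corner vertices $\mathcal{X}^c_h$. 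Once the two ambient surjectivity facts are in hand, the count is routine; alternatively one can bypass the explicit count entirely by constructing the inverse directly — given $\uv\in\uG{}^2_{h0}(\rot,0)$, split $\uv=\uv_S+\uv_B$ with $\uv_S\in\uS{}^2_{h0}$, $\uv_B\in\undertilde{\mathcal{B}}{}^2_{h0}$, note $\rot_h\uv_S\in\mathbb{P}^0_h$ (in fact in $\mathbb{P}^0_{h0}$ by the boundary moment conditions) so $\uv_S\in\uS{}^2_{h0}(\rot,w0)$, and check $\mathcal{F}_h\uv_S=\uv$ since the bubble correction is uniquely determined — which I would present as the cleaner route.
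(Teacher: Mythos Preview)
Your ``cleaner route'' for surjectivity---decompose $\uv\in\uG{}^2_{h0}(\rot,0)$ as $\uv_S+\uv_B$ with $\uv_S\in\uS{}^2_{h0}$, $\uv_B\in\undertilde{\mathcal{B}}{}^2_{h0}$, observe that $\int_T\rot\uv_S=0$ on every cell (since $\int_T\rot\uv_B=0$), hence $\uv_S\in\uS{}^2_{h0}(\rot,w0)$ and $\mathcal{F}_h\uv_S=\uv$---is exactly the paper's proof, and you are right that it bypasses the dimension bookkeeping entirely.

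Your injectivity argument, however, contains a slip. The bubble $\phi_T=\lambda_1^2+\lambda_2^2+\lambda_3^2-2/3$ has vanishing \emph{zeroth and first} edge moments only---that is two conditions, not three---and its edge trace is a nonzero quadratic (the Legendre $P_2$, up to scaling). So ``a quadratic with three vanishing moments'' is not what you have, and the edge trace of $\uphi{}_h$ is \emph{not} forced to zero by moment conditions alone. The correct and shorter argument is to invoke the direct sum $\uG{}^2_{h0}=\uS{}^2_{h0}\oplus\undertilde{\mathcal{B}}{}^2_{h0}$ (respectively $\uG{}^2_{ht}=\uS{}^2_{ht}\oplus\undertilde{\mathcal{B}}{}^2_{h0}$) already recorded in the paper: if $\mathcal{F}_h\uphi{}_h=\uphi{}_h+\undertilde{\phi}{}_h=0$ then $\uphi{}_h=-\undertilde{\phi}{}_h\in\uS{}^2_{h0}\cap\undertilde{\mathcal{B}}{}^2_{h0}=\{0\}$. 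The paper does exactly this in one line.
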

\begin{proof}
Since $\uS{}^2_{ht}\cap\undertilde{\mathcal{B}}{}_{h0}=\{0\}$, $\mathcal{F}_h$ is an injection on $\uS{}^2_{ht}(\rot,w0)$. 

Given $\ugamma{}_h\in\uG{}^2_{ht}(\rot,0)$, decompose it to $\ugamma{}_h=\ugamma{}_h^1+\ugamma{}_h^2$ such that $\ugamma{}_h^1\in\uS{}^2_{ht}$ and $\ugamma{}_h^2\in\undertilde{\mathcal{B}}{}_{h0}$. As $\rot(\ugamma{}_h|_T)=0$ and $\int_T\rot(\phi_T,0)=\int_T\rot(0,\phi_T)=0$ on every cell $T$, $\int_T\rot(\ugamma{}_h^1|_T)=0$. Namely $\ugamma{}_h^1\in \uS{}^2_{ht}(\rot,w0)$. This way $\mathcal{F}_h$ is a bijection between $\uS{}^2_{ht}(\rot,w0)$ and $\uG{}^2_{ht}(\rot,0)$.

Similarly we can prove $\mathcal{F}_h\uS{}^2_{h0}(\rot,w0)=\uG{}^2_{h0}(\rot,0)$, and the proof is completed. 
\end{proof}

By Lemmas \ref{eq:strucs2h0w0}, \ref{eq:strucs2h0w0} and \ref{eq:kernelbijec}, we can prove the lemmas below. 
\begin{lemma}\label{eq:strucG2h0w0}
The set $\{\mathcal{F}_h\uphi{}^x_a,\mathcal{F}_h\uphi{}^y_a,\mathcal{F}_h\uphi{}_{P_a},\mathcal{F}_h\uphi{}_e \}_{a\in\mathcal{X}^i_h,\ e\in\mathcal{E}^i_h}$ forms a basis of $\uG{}^2_{h0}(\rot,0)$.
\end{lemma}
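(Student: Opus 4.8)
The statement to prove is Lemma~\ref{eq:strucG2h0w0}: the set $\{\mathcal{F}_h\uphi{}^x_a,\mathcal{F}_h\uphi{}^y_a,\mathcal{F}_h\uphi{}_{P_a},\mathcal{F}_h\uphi{}_e\}_{a\in\mathcal{X}^i_h,\ e\in\mathcal{E}^i_h}$ forms a basis of $\uG{}^2_{h0}(\rot,0)$.

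The plan is to transport the basis of $\uS{}^2_{h0}(\rot,w0)$ given in Lemma~\ref{eq:strucs2h0w0} through the bijection $\mathcal{F}_h$ established in Lemma~\ref{eq:kernelbijec}. First I would recall that by Lemma~\ref{eq:strucs2h0w0} the family $\{\uphi{}^x_a,\uphi{}^y_a,\uphi{}_{P_a},\uphi{}_e\}_{a\in\mathcal{X}^i_h,\ e\in\mathcal{E}^i_h}$ is a basis of $\uS{}^2_{h0}(\rot,w0)$, and by Lemma~\ref{eq:kernelbijec} the operator $\mathcal{F}_h$ is a linear bijection from $\uS{}^2_{h0}(\rot,w0)$ onto $\uG{}^2_{h0}(\rot,0)$. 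A linear isomorphism carries any basis of the domain to a basis of the codomain: the image family spans because $\mathcal{F}_h$ is surjective and linear, and it is linearly independent because $\mathcal{F}_h$ is injective (any nontrivial linear dependence among the $\mathcal{F}_h\uphi{}_\bullet$ would pull back, via injectivity, to a nontrivial dependence among the $\uphi{}_\bullet$). Hence $\{\mathcal{F}_h\uphi{}^x_a,\mathcal{F}_h\uphi{}^y_a,\mathcal{F}_h\uphi{}_{P_a},\mathcal{F}_h\uphi{}_e\}$ is a basis of $\uG{}^2_{h0}(\rot,0)$, and the proof is complete.

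There is essentially no obstacle here: once Lemmas~\ref{eq:strucs2h0w0} and \ref{eq:kernelbijec} are in hand, this is the standard fact that linear isomorphisms preserve bases, applied verbatim. The only point worth a word of care is that $\mathcal{F}_h$ must be a bijection precisely between the two spaces named in the statement (weakly rot-free on one side, piecewise rot-free with the zero-trace boundary conditions on the other), and that the indexing sets $\mathcal{X}^i_h,\mathcal{E}^i_h$ match on both sides; both are guaranteed by Lemma~\ref{eq:kernelbijec}, since $\mathcal{F}_h$ is defined cellwise as $(\mathcal{F}_h\uphi{}_h)|_T=\mathcal{F}_T(\uphi{}_h|_T)$ and therefore does not alter supports, so the locality of each basis function (supported in a vertex patch or an edge patch) is inherited. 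I would phrase the proof in two sentences. If one wished to be completely explicit one could instead redo the dimension count — $\dim\uG{}^2_{h0}(\rot,0)=\dim\uS{}^2_{h0}(\rot,w0)=\#(\mathcal{E}^i_h)+3\#(\mathcal{X}^i_h)$, which equals the cardinality of the proposed family — together with linear independence of the images, but invoking the bijection is cleaner and is the route I would take.
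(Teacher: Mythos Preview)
Your proof is correct and matches the paper's approach exactly: the paper simply states that the lemma follows from Lemmas~\ref{eq:strucs2h0w0} and~\ref{eq:kernelbijec}, which is precisely the transport-of-basis-through-a-bijection argument you spell out. Your version is in fact more explicit than the paper's, which gives no further details.
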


\begin{lemma}\label{eq:strucG2htw0}
$\uG{}^2_{ht}(\rot,0)=\uG{}^2_{h0}(\rot,0)\oplus{\rm span}\{\mathcal{F}_h\uphi{}_a^b:a\in \mathcal{X}_h^b\setminus\mathcal{X}^c_h\}\oplus{\rm span}\{\mathcal{F}_h\uphi{}_e:e\in\mathcal{E}_h^b\}$.
\end{lemma}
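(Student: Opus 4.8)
The plan is to obtain this decomposition by transporting its $\uS$-space counterpart, Lemma~\ref{eq:strucs2htw0}, through the isomorphism $\mathcal{F}_h$ supplied by Lemma~\ref{eq:kernelbijec}. Recall that Lemma~\ref{eq:strucs2htw0} asserts
\[
\uS{}^2_{ht}(\rot,w0)=\uS{}^2_{h0}(\rot,w0)\oplus{\rm span}\{\uphi{}_a^b:a\in\mathcal{X}_h^b\setminus\mathcal{X}^c_h\}\oplus{\rm span}\{\uphi{}_e:e\in\mathcal{E}_h^b\},
\]
and that every summand on the right lies in the domain $\uS{}^2_h(\rot,w0)$ of $\mathcal{F}_h$: the boundary-vertex functions $\uphi{}_a^b$ are linear combinations of $\uphi{}_a^x,\uphi{}_a^y$, hence weakly rot-free, and they belong to $\uS{}^2_{ht}(\rot,w0)$ by (the proof of) Lemma~\ref{eq:strucs2htw0}.

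First I would record the elementary fact that a linear bijection carries direct sums to direct sums: if $F\colon V\to W$ is a linear isomorphism and $V=V_1\oplus V_2\oplus V_3$, then $W=F(V_1)\oplus F(V_2)\oplus F(V_3)$, since $F$ sends a basis of $V$ adapted to the $V_i$ to a basis of $W$ adapted to the $F(V_i)$. By Lemma~\ref{eq:kernelbijec}, $\mathcal{F}_h$ restricts to a linear bijection from $\uS{}^2_{ht}(\rot,w0)$ onto $\uG{}^2_{ht}(\rot,0)$, and it maps the subspace $\uS{}^2_{h0}(\rot,w0)\subset\uS{}^2_{ht}(\rot,w0)$ bijectively onto $\uG{}^2_{h0}(\rot,0)$. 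Applying the above fact to $F=\mathcal{F}_h|_{\uS{}^2_{ht}(\rot,w0)}$ and the displayed decomposition yields
\[
\uG{}^2_{ht}(\rot,0)=\uG{}^2_{h0}(\rot,0)\oplus\mathcal{F}_h\bigl({\rm span}\{\uphi{}_a^b:a\in\mathcal{X}_h^b\setminus\mathcal{X}^c_h\}\bigr)\oplus\mathcal{F}_h\bigl({\rm span}\{\uphi{}_e:e\in\mathcal{E}_h^b\}\bigr).
\]

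It then remains only to push $\mathcal{F}_h$ inside the spans: by linearity of $\mathcal{F}_h$, $\mathcal{F}_h\bigl({\rm span}\{\uphi{}_a^b\}\bigr)={\rm span}\{\mathcal{F}_h\uphi{}_a^b\}$ and $\mathcal{F}_h\bigl({\rm span}\{\uphi{}_e\}\bigr)={\rm span}\{\mathcal{F}_h\uphi{}_e\}$, which is exactly the claimed identity. There is essentially no hard step here; the only point requiring care is the bookkeeping, namely verifying that the three summands of Lemma~\ref{eq:strucs2htw0} sit in the domain of $\mathcal{F}_h$ and that $\mathcal{F}_h$ restricts compatibly to $\uS{}^2_{h0}(\rot,w0)$, both of which are already contained in Lemmas~\ref{eq:strucs2htw0} and~\ref{eq:kernelbijec}. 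If a self-contained argument is preferred, one may instead mimic the dimension count used for Lemma~\ref{eq:strucs2htw0}: the right-hand side is visibly a subspace of $\uG{}^2_{ht}(\rot,0)$, the vectors $\mathcal{F}_h\uphi{}_a^b$ and $\mathcal{F}_h\uphi{}_e$ are independent modulo $\uG{}^2_{h0}(\rot,0)$ because $\mathcal{F}_h$ is injective, and the two sides have equal dimension since $\mathcal{F}_h$ is a bijection that preserves all the relevant subspace dimensions.
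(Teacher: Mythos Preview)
Your argument is correct and is essentially the same as the paper's: the paper states that Lemmas~\ref{eq:strucG2h0w0} and~\ref{eq:strucG2htw0} follow from Lemmas~\ref{eq:strucs2h0w0}, \ref{eq:strucs2htw0} and~\ref{eq:kernelbijec}, i.e., by transporting the $\uS$-space decompositions through the bijection $\mathcal{F}_h$, which is exactly what you do.
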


\subsubsection{Locally supported basis functions of $B^3_{h0}$ and $B^3_{ht}$}

Now we are going to show that $B^3_{h0}$ admits a set of basis functions with vertex-patch-based supports.

\begin{theorem}\label{thm:strbh0}
The space $B^3_{h0}$ admits a set of basis functions each is supported in a patch of some vertex. 
\end{theorem}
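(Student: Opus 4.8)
The goal is to exhibit a basis of $B^3_{h0}$ whose members are each supported on a single vertex patch $P_a$. The natural route, given the machinery already developed, is to transport a local basis through the discretized Stokes complex \eqref{eq:complexbh3}: since $\bs{\nabla}_h$ maps $B^3_{h0}$ isomorphically onto the rot-free subspace $\uG{}^2_{h0}(\rot,0)=\{\utau{}_h\in\uG{}^2_{h0}:\rot_h\utau{}_h=0\}$ (this is precisely the content of Lemma \ref{eq:imp3} together with exactness in Theorem \ref{thm:exactb3}), a basis of $B^3_{h0}$ is obtained by taking, for each basis vector of $\uG{}^2_{h0}(\rot,0)$, the unique $B^3_{h0}$-preimage under $\bs{\nabla}_h$. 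By Lemma \ref{eq:strucG2h0w0} such a basis of $\uG{}^2_{h0}(\rot,0)$ is already in hand: $\{\mathcal{F}_h\uphi{}^x_a,\mathcal{F}_h\uphi{}^y_a,\mathcal{F}_h\uphi{}_{P_a},\mathcal{F}_h\uphi{}_e\}$. So the first step is: for each of these $\uG{}^2_{h0}(\rot,0)$-basis functions $\utau$, let the corresponding $B^3_{h0}$ basis function be the $w\in B^3_{h0}$ with $\nabla_h w=\utau$.

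**Localizing the supports.** The crux is to show each preimage $w$ is supported on a vertex patch. A function $\utau\in\uG{}^2_{h0}(\rot,0)$ that is itself supported in some patch $P_a$ is rot-free on each cell and, cell by cell, a gradient; the preimage $w$ is then locally (on each connected component of the interior of $P_a$) a $P_3$ function whose gradient is $\utau$, hence determined up to a constant there, and it vanishes identically off $P_a$ since $\nabla_h w=0$ there and $w\in A^3_{h0}$ forces vanishing vertex values; the only thing to pin down is the additive constant on $P_a$, which is fixed by the vertex-continuity at a boundary vertex of the patch (or by $w(a')=0$ at a boundary vertex $a'\in\mathcal X_h^b$ touching the patch) — here one must check the patch touches $\mathcal X_h^b$ or, if it is an interior patch, that the constant is forced by matching across the patch boundary where $w=0$. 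Thus the functions $\mathcal{F}_h\uphi{}^x_a$, $\mathcal{F}_h\uphi{}^y_a$, $\mathcal{F}_h\uphi{}_{P_a}$ yield preimages supported in $P_a$ directly. The genuinely delicate case is $\mathcal{F}_h\uphi{}_e$, whose support is the edge patch $P_e$ (two triangles), not a vertex patch; its preimage is a priori only edge-patch supported. One must argue that these edge-type preimages can be re-expressed — after subtracting suitable combinations of the vertex-type basis functions — as genuinely vertex-patch-supported functions, or, alternatively, replace the frame $\{\uphi{}_e\}$ upstairs by a different spanning set adapted so that the preimages localize to vertex patches. This recombination step is where the number-of-levels stratification $\mathcal X_h^{b,+k}$ introduced in the preliminaries is expected to enter, peeling the domain layer by layer from the boundary inward.

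**Main obstacle.** I expect the hard part to be exactly this last point: showing that the edge-based generators $\uphi{}_e$ (equivalently their $\mathcal{F}_h$-images) do not genuinely require edge-patch supports downstairs, i.e. that the lift $\bs{\nabla}_h^{-1}$ does not ``spread'' support. The dimension count is not the issue (Lemma \ref{eq:strucG2h0w0} already gives the right count, and $\bs{\nabla}_h$ is injective on $B^3_{h0}$ with kernel $0$ by exactness), so once a locally supported spanning set of the correct cardinality is produced, linear independence is automatic. The real work is the careful patch-by-patch integration argument plus the recombination that trades edge patches for vertex patches; one should also verify that a preimage supported in $P_a$ indeed lies in $B^3_{h0}$ and not merely in $B^3_h$, i.e. that the boundary moment conditions defining $B^3_{h0}$ are met — but this follows from Lemma \ref{eq:imp3} since $\utau\in\uG{}^2_{h0}$ already encodes those boundary conditions. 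Finally, I would record that the Navier-type analogue (Theorem for $B^3_{ht}$, if stated) follows verbatim using \eqref{eq:complexbh3navier}, Lemma \ref{eq:strucG2htw0}, and the extra boundary generators $\mathcal{F}_h\uphi{}^b_a$, $\mathcal{F}_h\uphi{}_e$ with $e\in\mathcal E_h^b$.
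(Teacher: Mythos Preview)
Your overall approach matches the paper's: use exactness of \eqref{eq:complexbh3} to identify $B^3_{h0}$ with $\uG{}^2_{h0}(\rot,0)$ via $\nabla_h$, pull back the basis from Lemma \ref{eq:strucG2h0w0}, and check that the pullback preserves local supports. However, your ``main obstacle'' is a non-issue. An edge patch $P_e$ is always contained in a vertex patch: if $a$ is either endpoint of $e$, then both triangles of $P_e$ have $a$ as a vertex, so $P_e\subset P_a$. Thus the preimage of $\mathcal{F}_h\uphi{}_e$, being supported in $P_e$, is automatically supported in a vertex patch, and no recombination is needed. The level stratification $\mathcal{X}_h^{b,+k}$ plays no role here (it was used only in the proof of Lemma \ref{eq:strucs2h0w0}).

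There is also a difference in how locality of $(\nabla_h)^{-1}$ is justified. You argue that outside the support of $\utau$ the preimage $w$ is piecewise constant, and then invoke vertex continuity plus vanishing at boundary vertices to force $w\equiv 0$ there; this works but requires some care about connectivity of the complement of a patch. The paper's argument is shorter: since the patch $P$ (either $P_a$ or $P_e$) is itself a simply connected triangulated domain, Theorem \ref{thm:exactb3} applies to it directly, and the restriction $\utau|_P$ lies in $\uG{}^2_{h0}(P)$ (the global jump conditions become boundary conditions on $\partial P$), so there is a preimage in $B^3_{h0}(P)$, which extends by zero to an element of $B^3_{h0}(\Omega)$ with the right gradient. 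In one line: both $\mathcal{F}_h$ and $(\nabla_h)^{-1}$ act cell-by-cell or patch-by-patch and hence preserve supports.
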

\begin{proof}
By the exact sequence \eqref{eq:complexbh3}, we got to know that the piecewise gradient $\nabla_h$ is a bijection between $B^3_{h0}$ and $\uG{}^2_{h0}(\rot,0)$. Further, by Lemma \ref{eq:strucG2h0w0}, the set 
\begin{equation}\label{eq:basisbh0}
\{(\nabla_h)^{-1}\mathcal{F}_h\uphi{}^x_a,(\nabla_h)^{-1}\mathcal{F}_h\uphi{}^y_a,(\nabla_h)^{-1}\mathcal{F}_h\uphi{}_{P_a},(\nabla_h)^{-1}\mathcal{F}_h\uphi{}_e \}_{a\in\mathcal{X}^i_h,\ e\in\mathcal{E}^i_h}
\end{equation} 
form a basis of $B^3_{h0}$. Note again that both $(\nabla_h)^{-1}$ and $\mathcal{F}_h$ preserve the locality of the support; this is verified by viewing the patch as a specific triangulation. Namely \eqref{eq:basisbh0} is a basis each supported in the patch of a vertex. The proof is completed. 
\end{proof}

Similar to Theorem \ref{thm:strbh0}, we have the description below.
\begin{theorem}\label{thm:strbh0n}
The space $B^3_{ht}$ admits a set of basis functions each is supported in a patch of some vertex. 
\end{theorem}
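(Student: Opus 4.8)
The plan is to mirror the proof of Theorem~\ref{thm:strbh0} exactly, replacing the complex \eqref{eq:complexbh3} for the Dirichlet space by the complex \eqref{eq:complexbh3navier} for the Navier space, and replacing the structural Lemma~\ref{eq:strucG2h0w0} for $\uG{}^2_{h0}(\rot,0)$ by the structural Lemma~\ref{eq:strucG2htw0} for $\uG{}^2_{ht}(\rot,0)$. First I would invoke Theorem~\ref{thm:exactb3}: the exactness of \eqref{eq:complexbh3navier} says that $\ker(\rot_h|_{\uG{}^2_{ht}})=\nabla_h B^3_{ht}$, and since $\nabla_h$ is injective on $B^3_{ht}$ (the weak continuity making $|\cdot|_{2,h}$ a norm on $B^3_{ht}$, as noted after \eqref{eq:bhvpp3n}), the piecewise gradient $\nabla_h$ is a bijection between $B^3_{ht}$ and $\uG{}^2_{ht}(\rot,0)$.

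Next I would push the basis of $\uG{}^2_{ht}(\rot,0)$ back through $(\nabla_h)^{-1}$. By Lemma~\ref{eq:strucG2htw0},
\[
\uG{}^2_{ht}(\rot,0)=\uG{}^2_{h0}(\rot,0)\oplus{\rm span}\{\mathcal{F}_h\uphi{}_a^b:a\in \mathcal{X}_h^b\setminus\mathcal{X}^c_h\}\oplus{\rm span}\{\mathcal{F}_h\uphi{}_e:e\in\mathcal{E}_h^b\},
\]
and, combining this with Lemma~\ref{eq:strucG2h0w0}, a basis of $\uG{}^2_{ht}(\rot,0)$ is
\[
\{\mathcal{F}_h\uphi{}^x_a,\mathcal{F}_h\uphi{}^y_a,\mathcal{F}_h\uphi{}_{P_a},\mathcal{F}_h\uphi{}_e \}_{a\in\mathcal{X}^i_h,\,e\in\mathcal{E}^i_h}\ \cup\ \{\mathcal{F}_h\uphi{}_a^b\}_{a\in\mathcal{X}_h^b\setminus\mathcal{X}^c_h}\ \cup\ \{\mathcal{F}_h\uphi{}_e\}_{e\in\mathcal{E}_h^b}.
\]
Applying the bijection $(\nabla_h)^{-1}$ yields a basis of $B^3_{ht}$. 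It then remains to check locality of the supports: each $\uphi{}^x_a,\uphi{}^y_a,\uphi{}_{P_a},\uphi{}_a^b$ is supported in $P_a$, each $\uphi{}_e$ in $P_e$; the operator $\mathcal{F}_h$ acts cell by cell ($(\mathcal{F}_h\uphi{}_h)|_T=\mathcal{F}_T(\uphi{}_h|_T)$) and so preserves supports; and $(\nabla_h)^{-1}$ also preserves supports, since restricting the complex \eqref{eq:complexbh3navier} to the triangulation formed by the patch $P_a$ (resp.\ $P_e$) shows that the preimage of a function supported in $P_a$ is itself supported in $P_a$ (up to an additive constant, which is killed by the vertex-zero condition at the boundary vertices of the patch, exactly as in Theorem~\ref{thm:strbh0}).

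The one genuinely new point compared with Theorem~\ref{thm:strbh0} — and the step I expect to need a line of care — is the treatment of the boundary basis functions $\mathcal{F}_h\uphi{}_a^b$ and $\mathcal{F}_h\uphi{}_e$ with $e\in\mathcal{E}_h^b$: their preimages under $\nabla_h$ are potentials whose support is a \emph{boundary} patch, and one must confirm that the scalar potential can be normalized (fixing its value to vanish at a boundary vertex of the patch) so that it genuinely lies in $B^3_{ht}$ and is supported in that patch. Since the defining conditions of $B^3_{ht}$ at boundary vertices and edges are precisely those inherited by the restricted complex, this normalization goes through, and the proof is complete. Everything else is identical, mutatis mutandis, to the argument for $B^3_{h0}$.
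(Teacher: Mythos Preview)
Your proposal is correct and follows essentially the same approach as the paper: invoke the exactness of \eqref{eq:complexbh3navier} to identify $B^3_{ht}$ with $\uG{}^2_{ht}(\rot,0)$ via $\nabla_h$, pull back the basis furnished by Lemma~\ref{eq:strucG2htw0} (together with Lemma~\ref{eq:strucG2h0w0}), and use that $\mathcal{F}_h$ and $(\nabla_h)^{-1}$ preserve supports. The paper's own proof is slightly terser, writing the decomposition directly as $B^3_{ht}=B^3_{h0}\oplus{\rm span}\{(\nabla_h)^{-1}\mathcal{F}_h\uphi{}_a^b\}\oplus{\rm span}\{(\nabla_h)^{-1}\mathcal{F}_h\uphi{}_e\}_{e\in\mathcal{E}_h^b}$ without spelling out the boundary-patch normalization you discuss, but the substance is identical.
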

\begin{proof}
By the complex \eqref{eq:complexbh3navier}, again, $\nabla_h$ is a bijection between $B^3_{ht}$ and $\uG{}^2_{ht}(\rot,0)$. By Lemma \ref{eq:strucG2htw0}, 
$$
B^3_{ht}=B^3_{h0}\oplus {\rm span}\{(\nabla_h)^{-1}\mathcal{F}_h\uphi{}_a^b:a\in \mathcal{X}_h^b\setminus\mathcal{X}^c_h\}\oplus{\rm span}\{(\nabla_h)^{-1}\mathcal{F}_h\uphi{}_e:e\in\mathcal{E}_h^b:\ e\in\mathcal{E}_h^b\},
$$
and a locally supported basis of $B^3_{ht}$ follows. The proof is completed.
\end{proof}

We use the notation below for convenience: 
\begin{equation}
\mbox{for}\ a\in\mathcal{X}_h^i\quad
w_{a}^x:=(\nabla^{-1})_h\circ\mathcal{F}_h\uphi{}_a^x,\
w_{a}^y:=(\nabla^{-1})_h\circ\mathcal{F}_h\uphi{}_a^y,\ 
w_{a}:=(\nabla^{-1})_h\circ\mathcal{F}_h\uphi{}_{P_a}^x,\ 
\end{equation}
\begin{equation}
\mbox{for}\ e\in\mathcal{E}_h,\quad  
w_e:=(\nabla^{-1})_h\circ\mathcal{F}_h\uphi{}_e;
\end{equation}
\begin{equation}
\mbox{and\  \ for}\ a\in\mathcal{X}^b_h\setminus\mathcal{X}_h^c,\quad \ w_a^b:=(\nabla^{-1})_h\circ\mathcal{F}_h\uphi{}_a^b.
\end{equation}

We remark here all these $w's$ can be obtained by straightforward calculation, as, again, both $(\nabla_h)^{-1}$ and $\mathcal{F}_h$ preserve the locality of the supports and can be done cell by cell. Though the space $B^3_h$ does not correspond to a finite element defined by Ciarlet's triple, these $w's$ play the same role as that by the usual nodal basis functions. Substituting these functions into the common routine generates finite element codes of the schemes \eqref{eq:bhvpp3} and \eqref{eq:bhvpp3n} in a standard way.

\subsection{Implementation by decomposition}

In this subsection, alternatively, we suggest a decomposition procedure, and the schemes \eqref{eq:bhvpp3} and \eqref{eq:bhvpp3n} can be implemented without the explicit construction of the basis functions. 

\begin{lemma}\label{lem:ncdecom}
Let $u_h^*$ be obtained by the following procedure: 
\begin{enumerate}
\item find $r_h\in A^3_{h0}$ such that 
\begin{equation}\label{eq:sub-1}
(\nabla_hr_h,\nabla_hs_h)=(f,s_h),\quad\forall\,s_h\in A^3_{h0};
\end{equation}
\item with $r_h$ obtained, find $(\uphi{}_h,p_h)\in \uG{}^2_{h0}\times\mathbb{P}^1_{h0}$ such that 
\begin{equation*}
\left\{
\begin{array}{ll}
(\nabla_h\uphi{}_h,\nabla_h\upsi{}_h)+(p_h,\rot_h\upsi{}_h)=(\nabla_hr_h,\upsi{}_h)&\forall\,\upsi{}_h\in\uG{}^2_{h0},
\\
(q_h,\rot_h\uphi{}_h)=0,&\forall\,q_h\in \mathbb{P}^1_{h0};
\end{array}
\right.
\end{equation*}
\item with $\uphi{}_h$ obtained, find $u_h^*\in A^3_{h0}$ such that 
$$
(\nabla_hu_h^*,\nabla_hv_h^*)=(\uphi{}_h,\nabla_hv_h^*),\quad\forall v_h^*\in A^3_{h0}.
$$
\end{enumerate}
Let $u_h$ be the solution of \eqref{eq:bhvpp3}. Then, $u_h^*=u_h$.
\end{lemma}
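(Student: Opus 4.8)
The plan is to show that the three-step decomposition reconstructs exactly the finite element solution $u_h$ of \eqref{eq:bhvpp3}, by tracking the gradient through the discretized Stokes complex \eqref{eq:complexbh3}. The key observation is that step (1) produces a function $r_h \in A^3_{h0}$ whose piecewise gradient $\nabla_h r_h$ records the data $f$ in a weak sense, step (2) performs a "discrete Stokes projection" of $\nabla_h r_h$ onto the weakly rot-free subspace, and step (3) is an $A^3_{h0}$-least-squares recovery that undoes the gradient. First I would observe that, by Lemma \ref{eq:imp3} and Theorem \ref{thm:exactb3}, the operator $\nabla_h$ is a bijection from $B^3_{h0}$ onto $\uG{}^2_{h0}(\rot,0) := \{\uv_h \in \uG{}^2_{h0} : \rot_h \uv_h = 0\}$; so it suffices to prove $\nabla_h u_h^* = \nabla_h u_h$, and for that it suffices to show $\nabla_h u_h^* \in \uG{}^2_{h0}(\rot,0)$ together with the correct variational identity against test functions in $B^3_{h0}$.

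Concretely, I would argue as follows. Step (2) enforces $\rot_h \uphi_h = 0$ (test with $q_h \in \mathbb{P}^1_{h0}$ and use that $\rot_h$ maps onto $\mathbb{P}^1_{h0}$ by Lemma \ref{lem:pGL}), so $\uphi_h \in \uG{}^2_{h0}(\rot,0)$; hence by Theorem \ref{thm:exactb3} there is a unique $\tilde u_h \in B^3_{h0}$ with $\nabla_h \tilde u_h = \uphi_h$. Next, step (3) defines $u_h^*$ by $(\nabla_h u_h^*, \nabla_h v_h^*) = (\uphi_h, \nabla_h v_h^*)$ for all $v_h^* \in A^3_{h0}$; since $B^3_{h0} \subset A^3_{h0}$ and $\nabla_h \tilde u_h = \uphi_h$, plugging $v_h^* = \tilde u_h$ and more generally testing against $B^3_{h0}$ shows $\nabla_h(u_h^* - \tilde u_h) \perp \nabla_h B^3_{h0}$; but I must also check $\nabla_h u_h^* \in \uG{}^2_{h0}(\rot,0)$, i.e. that $\nabla_h u_h^* = \uphi_h$. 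This is where the structure of step (3) is used: the equation says $\nabla_h u_h^*$ is the $L^2$-projection of $\uphi_h$ onto $\nabla_h A^3_{h0}$, and since $\uphi_h \in \uG{}^2_{h0} \subset \nabla_h A^3_{h0}$ is false in general — rather, one uses that $\uphi_h$ is itself a gradient: because $\rot_h \uphi_h = 0$ and $\uphi_h \in \uG{}^2_{h0}$, Lemma \ref{lem:cmc} (the complex with $\uG{}^{2,r}_{h0}$) or directly Lemma \ref{eq:imp3} gives $\uphi_h = \nabla_h \tilde u_h$ with $\tilde u_h \in A^3_{h0}$, so $\uphi_h \in \nabla_h A^3_{h0}$ and the projection is exact: $\nabla_h u_h^* = \uphi_h$. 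Thus $u_h^* = \tilde u_h \in B^3_{h0}$ (modulo the kernel of $\nabla_h$ on $A^3_{h0}$, which I address below).

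It then remains to verify that $u_h^*$ satisfies \eqref{eq:bhvpp3}. For $v_h \in B^3_{h0} \subset A^3_{h0}$, write $a_h(u_h^*, v_h) = (\nabla_h^2 u_h^*, \nabla_h^2 v_h) = (\nabla_h \uphi_h, \nabla_h \nabla_h v_h)$ (since $\nabla_h u_h^* = \uphi_h$). Now $\nabla_h v_h \in \uG{}^2_{h0}$ and $\rot_h \nabla_h v_h = 0$, so it is an admissible test function $\upsi_h$ in step (2); moreover the pressure term drops, $(p_h, \rot_h \nabla_h v_h) = 0$. Hence step (2) gives $(\nabla_h \uphi_h, \nabla_h \nabla_h v_h) = (\nabla_h r_h, \nabla_h v_h)$, and then step (1) with $s_h = v_h$ gives $(\nabla_h r_h, \nabla_h v_h) = (f, v_h)$. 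Chaining these, $a_h(u_h^*, v_h) = (f, v_h)$ for all $v_h \in B^3_{h0}$, so by well-posedness of \eqref{eq:bhvpp3} we conclude $u_h^* = u_h$.

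The main obstacle, and the one point needing care, is the non-injectivity of $\nabla_h$ on $A^3_{h0}$: step (1) and step (3) determine $r_h$ and $u_h^*$ only up to the kernel of the piecewise-gradient seminorm on $A^3_{h0}$, which (by a connectedness argument over the patch structure $\mathcal{X}_h^{b,+k}$ used in Lemma \ref{eq:strucs2h0w0}) consists of functions that are continuous at all vertices, piecewise $P_3$, piecewise constant — hence globally constant — hence zero by the boundary condition; so in fact $\nabla_h$ \emph{is} injective on $A^3_{h0}$ and the intermediate solutions are genuinely unique. I would state this as a preliminary remark (the bilinear form $(\nabla_h \cdot, \nabla_h \cdot)$ is an inner product on $A^3_{h0}$), after which each of the three steps is a well-posed symmetric positive-definite (or, for step (2), inf–sup stable saddle-point) problem, and the chaining argument above is purely algebraic.
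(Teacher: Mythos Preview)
Your proposal is correct and follows essentially the same route as the paper, which gives only a one-line proof pointing to Theorem \ref{thm:exactb3} and Lemma \ref{eq:imp3} (together with the remark that $(\nabla_h\cdot,\nabla_h\cdot)$ is coercive on $A^3_{h0}$ by vertex continuity). Your write-up simply unpacks that one-liner in detail: the rot-free constraint in step~(2) forces $\uphi{}_h\in\uG{}^2_{h0}(\rot,0)=\nabla_hB^3_{h0}\subset\nabla_hA^3_{h0}$, so step~(3) recovers $u_h^*\in B^3_{h0}$ with $\nabla_hu_h^*=\uphi{}_h$, and then testing step~(2) with $\upsi{}_h=\nabla_hv_h$ and step~(1) with $s_h=v_h$ chains to $a_h(u_h^*,v_h)=(f,v_h)$.
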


\begin{lemma}\label{lem:ncdecomn}
Let $z_h^*$ be obtained by the following procedure: 
\begin{enumerate}
\item find $r_h\in A^3_{h0}$ such that 
\begin{equation}
(\nabla_hr_h,\nabla_hs_h)=(f,s_h),\quad\forall\,s_h\in A^3_{h0};
\end{equation}
\item with $r_h$ obtained, find $(\uphi{}_h,p_h)\in \uG{}^2_{ht}\times\mathbb{P}^1_{h0}$ such that 
\begin{equation*}
\left\{
\begin{array}{ll}
(\nabla_h\uphi{}_h,\nabla_h\upsi{}_h)+(p_h,\rot_h\upsi{}_h)=(\nabla_hr_h,\upsi{}_h)&\forall\,\upsi{}_h\in\uG{}^2_{ht},
\\
(q_h,\rot_h\uphi{}_h)=0,&\forall\,q_h\in \mathbb{P}^1_{h0};
\end{array}
\right.
\end{equation*}
\item with $\uphi{}_h$ obtained, find $z_h^*\in A^3_{h0}$ such that 
$$
(\nabla_hz_h^*,\nabla_hv_h^*)=(\uphi{}_h,\nabla_hv_h^*),\quad\forall v_h^*\in A^3_{h0}.
$$
\end{enumerate}
Let $z_h$ be the solution of \eqref{eq:bhvpp3n}. Then, $z_h^*=z_h$.
\end{lemma}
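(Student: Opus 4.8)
The statement is the exact Navier-type analogue of Lemma~\ref{lem:ncdecom}, so the plan is to run the same three-line identification, using the discretized Stokes complex \eqref{eq:complexbh3navier} in place of \eqref{eq:complexbh3}. First I would note that each of the three subproblems is uniquely solvable: steps (1) and (3) are symmetric positive-definite problems on $A^3_{h0}$, because $\nabla_h$ has trivial kernel on $A^3_{h0}$ (a cellwise-constant function that is continuous at vertices and vanishes at boundary vertices is zero), and step (2) is a well-posed mixed problem, the inf--sup stability of $\rot_h:\uG{}^2_{ht}\to\mathbb{P}^1_{h0}$ coming from Lemma~\ref{lem:pGL} and the remark following it together with $\uG{}^2_{h0}\subset\uG{}^2_{ht}$, while $(\nabla_h\cdot,\nabla_h\cdot)$ is positive definite on $\uG{}^2_{ht}$ (its kernel consists of globally constant vector fields with vanishing tangential moments on the boundary edges, hence is trivial). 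So $z_h^*$ is well defined.

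The key step is to recognize that $\uphi{}_h$ produced by step (2) is the piecewise gradient of an element of $B^3_{ht}$. The constraint equation $(q_h,\rot_h\uphi{}_h)=0$ for all $q_h\in\mathbb{P}^1_{h0}$, combined with $\rot_h\uphi{}_h\in\mathbb{P}^1_{h0}$ (which is part of the assertion of \eqref{eq:complexbh3navier}, since $\uphi{}_h\in\uG{}^2_{ht}$), forces $\rot_h\uphi{}_h=0$, i.e. $\uphi{}_h\in\uG{}^2_{ht}(\rot,0)$. By the exactness of \eqref{eq:complexbh3navier} at $\uG{}^2_{ht}$ (Theorem~\ref{thm:exactb3}) there is a unique $\hat z_h\in B^3_{ht}$ with $\nabla_h\hat z_h=\uphi{}_h$. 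Since $B^3_{ht}\subset A^3_{h0}$ by Lemma~\ref{eq:imp3}, $\hat z_h$ is an admissible competitor in step (3), and it manifestly satisfies $(\nabla_h\hat z_h,\nabla_h v_h^*)=(\uphi{}_h,\nabla_h v_h^*)$ for all $v_h^*\in A^3_{h0}$; by uniqueness of the solution of step (3), $z_h^*=\hat z_h$.

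It then remains to show $\hat z_h$ solves \eqref{eq:bhvpp3n}. For $v_h\in B^3_{ht}$ I would test the first equation of step (2) with $\upsi{}_h=\nabla_h v_h$, which is legitimate because $\nabla_h v_h\in\uG{}^2_{ht}$ by Lemma~\ref{eq:imp3}; the pressure term drops since $\rot_h\nabla_h v_h=0$ on every cell, leaving $(\nabla_h\uphi{}_h,\nabla_h\nabla_h v_h)=(\nabla_h r_h,\nabla_h v_h)$, that is $(\nabla_h^2\hat z_h,\nabla_h^2 v_h)=(\nabla_h r_h,\nabla_h v_h)$. Testing step (1) with $s_h=v_h$ (allowed since $v_h\in B^3_{ht}\subset A^3_{h0}$) gives $(\nabla_h r_h,\nabla_h v_h)=(f,v_h)$. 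Hence $(\nabla_h^2\hat z_h,\nabla_h^2 v_h)=(f,v_h)$ for every $v_h\in B^3_{ht}$, and by the well-posedness of \eqref{eq:bhvpp3n} we get $\hat z_h=z_h$, so $z_h^*=z_h$.

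I do not expect a genuine obstacle; the argument is purely structural once the discretized Stokes complex is available. The only points that require attention are exactly the two places where that structure enters: that $\rot_h\uphi{}_h$ a priori only lies in $\mathbb{P}^1_h$ and must be known to lie in $\mathbb{P}^1_{h0}$ for the pressure constraint to annihilate it (this is the content of \eqref{eq:complexbh3navier} in Theorem~\ref{thm:exactb3}), and that the ``discrete harmonic'' cancellation in step (3) uses the inclusion $B^3_{ht}\subset A^3_{h0}$ from Lemma~\ref{eq:imp3}.
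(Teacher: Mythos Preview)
Your argument is correct and is precisely the one the paper has in mind: the paper does not spell out a proof but simply states that Lemmas~\ref{lem:ncdecom} and~\ref{lem:ncdecomn} follow from Theorem~\ref{thm:exactb3} and Lemma~\ref{eq:imp3}, which are exactly the two structural inputs (exactness of \eqref{eq:complexbh3navier} and the inclusion $B^3_{ht}\subset A^3_{h0}$) you invoke. Your write-up supplies the details the paper omits, and nothing in it deviates from the intended route.
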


Lemmas \ref{lem:ncdecom} and \ref{lem:ncdecomn} follows from Theorem \ref{thm:exactb3} and Lemma \ref{eq:imp3}. The scheme \eqref{eq:sub-1} is not a convergent one for the Poisson equation, but it is well-posed based on the continuity of $A^3_{h0}$ on vertices. With the formulations presented in Lemmas \ref{lem:ncdecom} and \ref{lem:ncdecomn}, the spaces used for Poisson equations and Stokes problems only are easy to formulate; to solve the system only needs solving two Poisson systems and one Stokes systems one by one, each of which can be solved with various optimal solvers in a friendly way.

\begin{remark}
The decompositions as in Lemmas \ref{lem:ncdecom} and \ref{lem:ncdecomn} can also be established for \eqref{eq:bhvpp3nd} and the one in Remark \ref{rem:laplapnavier}, respectively.
\end{remark}

\section{Conclusion and discussion}
\label{sec:conc}

In this paper, based on theoretical analysis by an indirect approach,  a constructive answer is given to the question if an optimal scheme can be designed for the biharmonic equation with piecewise cubic polynomials on general triangulations; the schemes work optimally, e.g., on triangulations shown in Figure \ref{fig:counter}. Beside the theoretical meaning, the scheme can find its application onto practical problems. For example, a high order scheme has been implemented based on $B^3_h$ for the Helmholtz transmission eigenvalue problem from inverse scattering and accoustics in \cite{Xi.Y;Ji.X;Zhang.S2019arxiv}; we refer there for many numerical experiments about schemes with $B^3_h$. The practical usage of the scheme can be thus illustrated.

This paper relies on construction and utilization of discretized Stokes complexes based on the $\uG{}^2_{h0}-\mathbb{P}^1_{h0}$ pair. The space $\uG{}^k_h$ with $k=3$ corresponds to the Crouzeix--Falk pair studied in \cite{Crouzeix.M;Falk.R1989}. In that paper, the authors proved that the pair $\uG{}^3_{h0}-\mathbb{P}^2_{h0}$ is stable ``for most reasonable meshes." Moreover, they presented a conjecture that the pair is stable ``for any triangulation of a convex polygon satisfying the minimal angle condition and containing an interior vertex." Recently, some triangulations where $\uG{}^3_{h0}-\mathbb{P}_{h0}^2$ is stable or at least $\dv\uG{}^3_{h0}=\mathbb{P}_{h0}^2$ are introduced in \cite{guzman2017cubic}. This hints the possibility to generalize the concept for optimal quartic element schemes (see \cite{Zhang.S2018} for details).

It is worthwhile pointing out, in this paper, we focus on the primal schemes only. There have been various kinds of schemes that considered new variables and/or conduct the second order differentiation in a dual way, such as the mixed element method, local DG method, hybridized DG method, CDG method, weak Galerkin method, and so forth. We remark that the literature on related works in this context is vast, but we will not discuss them in this paper. Moreover, based on the space $B^3_{h0}$($B^3_{ht}$), DG schemes can be designed. One may be able to construct, for example, a weakly over-penalized IP method (like \cite{Brenner.S;Gudi.T;Sung.L2010}) or IPDG method with optimal convergence rate robust with respect to the penalization paremeter (\cite{Zhang.S2019arxiv}) with piecewise cubic polynomials.

The spaces $A^3_h$ and $B^3_h$ each belongs to a systematic family which reads:
\begin{equation*}
A^k_h:=\{w_h\in L^2(\Omega):w_h|_T\in P_k(T); w_h(a)\ \mbox{is\ continuous\ at}\ a\in\mathcal{X}_h\}
\end{equation*}
and
\begin{multline*}
B_h^k:=\{w_h\in L^2(\Omega):w_h(a)\ \mbox{is\ continuous\ at}\ a\in\mathcal{X}_h;
\\ 
\fint_e\llt w_h\rrt p_e=0, \ \forall\,p_e\in P_{k-3}(e), \fint_eq_e\llt\partial_{\bf n}w_h\rrt=0,\ \forall\,p_e\in P_{k-2}(e),\ \forall\, e\in\mathcal{E}_h^i\}.
\end{multline*}
The spaces $A^k_{h0}$ and $B_{h0}^k$ can be defined corresponding to the boundary conditions of $H^1_0(\Omega)$ and $H^2_0(\Omega)$, respectively. It is now known that $B_{h(0)}^k$ is an optimally consistent finite element space for biharmonic equation ($k=2,3$) for arbitrary triangulations. For $k=4$, as discussed above, the assertion holds on most ``reasonable" triangulations. Can the family work optimally with arbitrary $k\geqslant 2$ and can it be generalized to a higher dimension and even higher-order problems? This question could be of interest in future research.


\begin{thebibliography}{}

\bibitem{Alfeld.P;Piper.B;Schumaker.L1987}
{\sc Alfeld, P., Piper, B. \& Schumaker, L.~L.} (1987)
\newblock An explicit basis for $\mathcal{C}^1$ quartic bivariate splines.
\newblock {\it SIAM Journal on Numerical Analysis\/}, {\bf 24}, 891--911.

\bibitem{Argyris.J;Fried.I;Scharpf.D1968}
{\sc Argyris, J.~H., Fried, I. \& Scharpf, D.~W.} (1968)
\newblock The TUBA family of plate elements for the matrix displacement method.
\newblock {\it The Aeronautical Journal\/}, {\bf 72}, 701--709.


\bibitem{Arnold.D;Falk.R;Winther.R2006}
{\sc Arnold, D.~N., Falk, R.~S. \& Winther, R.} (2006)
\newblock Finite element exterior calculus, homological techniques, and
  applications.
\newblock {\it Acta numerica\/}, {\bf 15}, 1--155.


\bibitem{Babuska.I;Suri.M1992}
{\sc Babu\v{s}ka, I. \& Suri, M.} (1992)
\newblock Locking effects in the finite element approximation of elasticity problems. 
\newblock {\it Numerische Mathematik\/}, {\bf 62}, 439--463.



\bibitem{Baran.A;Stoyan.G2007}
{\sc Baran, A. \& Stoyan, G.} (2007)
\newblock Gauss-Legendre elements: a stable, higher order non-conforming finite
  element family.
\newblock {\it Computing\/}, {\bf 79}, 1--21.

\bibitem{Blum.H;Rannacher.R1980}
{\sc Blum, H., \& Rannacher, R.} (1980)
\newblock On the boundary value problem of the biharmonic operator on domains with angular corners.
\newblock {\it Mathematical Methods in the Applied Sciences\/}, \textbf{2}, 556--581.

\bibitem{Brenner.S;Gudi.T;Sung.L2010}
{\sc Brenner, S.~C., Gudi, T. \& Sung, L.-Y.} (2010)
\newblock A weakly over-penalized symmetric interior penalty method for the biharmonic problem.
\newblock {\it Electron. Trans. Numer. Anal\/}, {\bf 37}, 214--238.


\bibitem{Chui.C;Hecklin.G;Nurnberger.G;Zeilfelder.F2008}
{\sc Chui, C., Hecklin, G., N{\"u}rnberger, G. \& Zeilfelder, F.} (2008)
\newblock Optimal lagrange interpolation by quartic $\mathcal{C}^1$ splines on triangulations.
\newblock {\it Journal of Computational and Applied Mathematics\/}, {\bf 216},
  344--363.

\bibitem{Clough.R;Tocher.J1965}
{\sc Clough, R. \& Tocher, J.} (1965)
\newblock Finite element stiffness matrices for analysis of plates in bending.
\newblock {\it Proceedings of the 1st Conference on Matrix Methods in
  Structural Mechanics, Wright-Patterson AFB, 1965\/}.
\newblock {\it Proceedings of the 1st Conference on Matrix Methods in
  Structural Mechanics, Wright-Patterson AFB, 1965\/}.

\bibitem{Colton.D;Monk.P1988}
{\sc Colton, D. \& Monk, P.} (1988)
\newblock The inverse scattering problem for time-harmonic acoustic waves in an
  inhomogeneous medium.
\newblock {\it The Quarterly Journal of Mechanics and Applied Mathematics\/},
  {\bf 41}, 97--125.

\bibitem{Crouzeix.M;Falk.R1989}
{\sc Crouzeix, M. \& Falk, R.~S.} (1989)
\newblock Nonconforming finite elements for the Stokes problem.
\newblock {\it Mathematics of Computation\/}, {\bf 52}, 437--456.

\bibitem{Crouzeix.M;Raviart.P1973}
{\sc Crouzeix, M. \& Raviart, P.-A.} (1973)
\newblock Conforming and nonconforming finite element methods for solving the
  stationary Stokes equations i.
\newblock {\it Revue fran{\c{c}}aise d'automatique informatique recherche
  op{\'e}rationnelle. Math{\'e}matique\/}, {\bf 7}, 33--75.

\bibitem{Boor.C;DeVore.R1983}
{\sc de~Boor, C. \& DeVore, R.} (1983)
\newblock Approximation by smooth multivariate splines.
\newblock {\it Transactions of the American Mathematical Society\/}, {\bf 276},
  775--788.

\bibitem{Boor.C;Hollig.K1983}
{\sc de~Boor, C. \& H{\"o}llig, K.} (1983)
\newblock Approximation order from bivariate $\mathcal{C}^1$-cubics: a counterexample.
\newblock {\it Proceedings of the American Mathematical Society\/}, {\bf 87},
  649--655.

\bibitem{deBoor.C;Hollig.K1988}
{\sc de~Boor, C. \& H{\"o}llig, K.} (1988)
\newblock Approximation power of smooth bivariate pp functions.
\newblock {\it Mathematische Zeitschrift\/}, {\bf 197}, 343--363.

\bibitem{deBoor.C;Jia.R1993}
{\sc de~Boor, C. \& Jia, R.-Q.} (1993)
\newblock A sharp upper bound on the approximation order of smooth bivariate pp
  functions.
\newblock {\it Journal of approximation theory\/}, {\bf 72}, 24--33.


\bibitem{deVeubeke.BF1968}
{\sc De~Veubeke, B.~F.} (1968)
\newblock A conforming finite element for plate bending.
\newblock {\it International Journal of Solids and Structures\/}, {\bf 4},
  95--108.


\bibitem{Feng.C;Zhang.S2016}
{\sc Feng, C. \& Zhang, S.} (2016)
\newblock Optimal solver for morley element discretization of biharmonic
  equation on shape-regular grids.
\newblock {\it Journal of Computational Mathematics\/}, {\bf 34}, 159--173.

\bibitem{Fortin.M;Soulie.M1983}
{\sc Fortin, M. \& Soulie, M.} (1983)
\newblock A non-conforming piecewise quadratic finite element on triangles.
\newblock {\it International Journal for Numerical Methods in Engineering\/},
  {\bf 19}, 505--520.


\bibitem{Grasedyck.L;Wang.L;Xu.J2016}
{\sc Grasedyck, L., Wang, L. \& Xu, J.} (2016)
\newblock A nearly optimal multigrid method for general unstructured grids.
\newblock {\it Numerische Mathematik\/}, {\bf 134}, 637--666.

\bibitem{Guzman.J;Leykekhman.D;Neilan.M2012}
{\sc Guzm\'an, J., Leykekhman, D. \& Neilan, M.} (2012)
\newblock A family of non-conforming elements and the analysis of Nitsche's method for a singularly perturbed fourth order problem.
\newblock {\it Calcolo\/}, {\bf 49}, 95--125.

\bibitem{guzman2017cubic}
{\sc Guzman, J. \& Scott, R.} (2017)
\newblock Cubic Lagrange elements satisfying exact incompressibility.
\newblock {\it arXiv preprint arXiv:1712.00672\/}.



\bibitem{Heindl.G1979}
{\sc Heindl, G.} (1979)
\newblock Interpolation and approximation by piecewise quadratic $\mathcal{C}^1$ functions
  of two variables.
\newblock {\it Multivariate approximation theory\/}.
\newblock Birkh\"auser, Basel, pp. 146--161.

\bibitem{Hiptmair.R;Xu.J2007}
{\sc Hiptmair, R. \& Xu, J.} (2007)
\newblock Nodal auxiliary space preconditioning in  H(curl) and H(div) spaces.
\newblock {\it SIAM Journal on Numerical Analysis\/}, {\bf 45}, 2483--2509.

\bibitem{Hu.J;Shi.Z2005}
{\sc Hu, J. \& Shi, Z.-c.} (2005)
\newblock Constrained quadrilateral nonconforming rotated $Q_1$ element.
\newblock {\it Journal of Computational Mathematics\/}, 561--586.


\bibitem{Kirsch.A1986}
{\sc Kirsch, A.} (1986)
\newblock The denseness of the far field patterns for the transmission problem.
\newblock {\it IMA journal of applied mathematics\/}, {\bf 37}, 213--225.


\bibitem{Lin.Q;Xie.H;Xu.J2014}
{\sc Lin, Q., Xie, H. \& Xu, J.} (2014)
\newblock Lower bounds of the discretization error for piecewise polynomials.
\newblock {\it Mathematics of Computation\/}, {\bf 83}, 1--13.

\bibitem{Morgan.J;Scott.R1975}
{\sc Morgan, J. \& Scott, R.} (1975)
\newblock A nodal basis for $\mathcal{C}^1$ piecewise polynomials of degree $n\geqslant
  5$.
\newblock {\it Mathematics of Computation\/}, {\bf 29}, 736--740.

\bibitem{Morley.L1968}
{\sc Morley, L.} (1968)
\newblock The triangular equilibrium element in the solution of plate bending
  problems.
\newblock {\it Aero. Quart\/}, {\bf 19}, 149--169.


\bibitem{Nurnberger.G;Schumaker.L;Zeilfelder.F2004}
{\sc N{\"u}rnberger, G., Schumaker, L.~L. \& Zeilfelder, F.} (2004)
\newblock Lagrange interpolation by $\mathcal{C}^1$ cubic splines on triangulated
  quadrangulations.
\newblock {\it Advances in Computational Mathematics\/}, {\bf 21}, 357--380.

\bibitem{Nurnberger.G;Zeilfelder.F2004}
{\sc N{\"u}rnberger, G. \& Zeilfelder, F.} (2004)
\newblock Lagrange interpolation by bivariate $\mathcal{C}^1$-splines with optimal
  approximation order.
\newblock {\it Advances in computational mathematics\/}, {\bf 21}, 381--419.

\bibitem{Park.C;Sheen.D2003}
{\sc Park, C. \& Sheen, D.} (2003)
\newblock $P_1$-nonconforming quadrilateral finite element methods for
  second-order elliptic problems.
\newblock {\it SIAM Journal on Numerical Analysis\/}, {\bf 41}, 624--640.

\bibitem{Powell.M1976}
{\sc Powell, M.~J.} (1976)
\newblock Piecewise quadratic surface fitting for contour plotting.
\newblock {\it Software for Numerical Mathematics\/}.
\newblock Academic Press, New York, pp. 253--271.

\bibitem{Powell.M;Sabin.M1977}
{\sc Powell, M.~J. \& Sabin, M.~A.} (1977)
\newblock Piecewise quadratic approximations on triangles.
\newblock {\it ACM Transactions on Mathematical Software (TOMS)\/}, {\bf 3},
  316--325.

\bibitem{Ruesten.T;Winther.R1992}
{\sc Rusten, T. \& Winther, R.} (1992)
\newblock A preconditioned iterative method for saddlepoint problems.
\newblock {\it SIAM Journal on Matrix Analysis and Applications\/}, {\bf 13},
  887--904.

\bibitem{Sander.G1964}
{\sc Sander, G.} (1964)
\newblock Bornes sup erieures et inf erieures dans l'analyse matricielle des
  plaques en flexion-torsion.
\newblock {\it Bull. Soc. r. Sci. Liege\/}, {\bf 33}, 456--494.

\bibitem{Shi.Z1990}
{\sc Shi, Z.~C.} (1990)
\newblock On the error estimates of Morley element.
\newblock {\it Math. Numer. Sinica\/}, {\bf 12}, 113--118.

\bibitem{Wang.M;Zu.P;Zhang.S2012}
{\sc Wang, M., Zu, P.~H. \& Zhang, S.} (2012)
\newblock High accuracy nonconforming finite elements for fourth order
  problems.
\newblock {\it Science China\/}, {\bf 55}, 2183--2192.

\bibitem{Wang.M;Xu.J2013}
{\sc Wang, M. \& Xu, J.} (2013)
\newblock Minimal finite element spaces for $2m$-th-order partial differential
  equations in $\mathbb{R}^n$.
\newblock {\it Mathematics of Computation\/}, {\bf 82}, 25--43.

\bibitem{Xi.Y;Ji.X;Zhang.S2019+}
{\sc Xi, Y., Ji, X. \& Zhang, S.} (in press)
\newblock A multi-level mixed element scheme of the two-dimensional Helmholtz transmission eigenvalue problem.
\newblock {\it IMA Journal of Numerical Analysis\/}.

\bibitem{Xi.Y;Ji.X;Zhang.S2019arxiv}
{\sc Xi, Y., Ji, X. \& Zhang, S.} (2019)
\newblock
A high accuracy nonconforming finite element scheme for Helmholtz transmission eigenvalue problem.
\newblock {\it arXiv preprint\/}, {\bf arXiv:1910.00898}.

\bibitem{Xu.J1996as}
{\sc Xu, J.} (1996)
\newblock The auxiliary space method and optimal multigrid preconditioning
  techniques for unstructured grids.
\newblock {\it Computing\/}, {\bf 56}, 215--235.

\bibitem{Xu.J2010icm}
{\sc Xu, J.} (2010)
\newblock Fast poisson-based solvers for linear and nonlinear PDEs.
\newblock {\it Proceedings of the International Congress of Mathematics\/},
  vol. 4.
\newblock {\it Proceedings of the International Congress of Mathematics\/},
  vol. 4., pp. 2886--2912.

\bibitem{Zenisek.A1970}
{\sc {\v{Z}}en{\'\i}{\v{s}}ek, A.} (1970)
\newblock Interpolation polynomials on the triangle.
\newblock {\it Numerische Mathematik\/}, {\bf 15}, 283--296.

\bibitem{Zenisek.A1974}
{\sc {\v{Z}}en{\'\i}{\v{s}}ek, A.} (1974)
\newblock A general theorem on triangular finite $\mathcal{C}^{(m)}$-elements.
\newblock {\it Revue fran{\c{c}}aise d'automatique, informatique, recherche
  op{\'e}rationnelle. Analyse num{\'e}rique\/}, {\bf 8}, 119--127.

\bibitem{Zhang.Shangyou2008}
{\sc Zhang, S.} (2008)
\newblock A C1-P2 finite element without nodal basis.
\newblock {\it ESAIM: Mathematical Modelling and Numerical Analysis\/}, {\bf
  42}, 175--192.

\bibitem{Zhang.S2018ima}
{\sc Zhang, S.} (2018a)
\newblock Minimal consistent finite element space for the biharmonic equation
  on quadrilateral grids.
\newblock {\it IMA Journal of Numerical Analysis\/}.

\bibitem{Zhang.S2018}
{\sc Zhang, S.} (2018b)
\newblock On optimal finite element schemes for biharmonic equation.
\newblock {\it arXiv preprint\/}, {\bf arXiv:1805.03851}.

\bibitem{Zhang.S2019arxiv}
{\sc Zhang, S.} (2019)
\newblock Optimal piecewise cubic finite element schemes for the biharmonic
 equation on general triangulations.
\newblock {\it arXiv preprint\/}, {\bf arXiv:1903.04897}.


\bibitem{Zhang.S;Xu.J2014}
{\sc Zhang, S. \& Xu, J.} (2014)
\newblock Optimal solvers for fourth-order PDEs discretized on unstructured
  grids.
\newblock {\it SIAM Journal on Numerical Analysis\/}, {\bf 52}, 282--307.

\end{thebibliography}
\end{document}